\numberwithin{equation}{section}
\renewcommand{\thesection}{\arabic{section}}
\long\def\eatit#1{}
\newtheorem{thm}{Theorem}[subsection]
\newtheorem{prop}[thm]{Proposition}
\newtheorem{lem}[thm]{Lemma}
\newtheorem{cor}[thm]{Corollary}
\newtheorem{Ques}[thm]{Question}
\newtheorem{Prob}[thm]{Problem}
\newtheorem{Conj}[thm]{Conjecture}
\theoremstyle{definition}
\newtheorem{Def}[thm]{Definition}
\newtheorem{Not}[thm]{Notation}
\newtheorem{Eg}[thm]{Example}
\newtheorem{Ex}[thm]{Exercise}
\newtheorem*{Soln}{Solution}
\newtheorem{Rmk}[thm]{Remark}
\newtheorem*{Rmk*}{Remark}
\newcommand{\Spec}{\operatorname{Spec}} 
\newcommand{\Cox}{\operatorname{Cox}} 
\newcommand{\Num}{\operatorname{Num}} 
\newcommand{\shf}[1]{{\mathcal{#1}}} 
\newcommand{\pr}[1]{{{\bf P}^{#1}}}
\newcommand{\OO}{{\mathcal{O}}}
\newcommand{\EFF}{\operatorname{EFF}}      
\newcommand{\Cl}{\operatorname{Cl}}      
\newcommand{\NEF}{\operatorname{NEF}}
\begin{document}

\title[Global aspects of the geometry of surfaces]{Global aspects of the geometry of surfaces}

\author[B. Harbourne]{Brian Harbourne}
\address{Department of Mathematics\\
University of Nebraska\\
Lincoln, NE 68588-0130 USA}
\email{bharbour@math.unl.edu}

\date{July 30, 2009}

\thanks{Acknowledgments: These notes were prepared for lectures given at
the summer school of the SFB/TR 45 Bonn-Essen-Mainz, financed by 
the Deutsche Forschungsgemeinschaft, that took place March 23-27, 2009 
at the Pedagogical University of Cracow, with the goal of improving the training 
of PhD students and postdocs in the area, in particular of the members of the SFB/TR 45. 
I thank the organizers of the school, Stefan M\"uller-Stach and Tomasz Szemberg, 
for their invitation to give these lectures (and Tomasz for his careful reading of these notes), and 
I thank Joaquim Ro\'e,  fellow speaker at the Summer School in Krakow and my 
host in Barcelona the week before the School when some of the work on these notes was carried out.
I also thank Zach Teitler for sharing his notes \cite{refTe} on multiplier ideals with the participants,
and I thank Burt Totaro and J\'anos Koll\'ar for their comments regarding the Bounded Negativity Conjecture.}

\begin{abstract}
Several open problems related to
the behavior of the monoid of effective divisors and the nef cone
for smooth projective surfaces over an algebraically closed field are discussed,
motivating and putting into historical context concepts such as Mori dream spaces, Seshadri constants
and the resurgence of homogeneous ideals in polynomial rings.
Some recent work on these topics is discussed along with
the problem of which ordinary powers of homogeneous ideals contain given
symbolic powers of those ideals. Exercises, with solutions, are included.
\end{abstract}

\maketitle

\renewcommand{\thesection}{\Roman{section}}

\section{Lecture: Bounded Negativity}

\subsection{Introduction}\label{intro}

A {\em surface} here will always mean a smooth irreducible closed 2 dimensional 
subscheme of projective space, over an algebraically closed field $k$.
A {\em prime divisor} on $X$ is a reduced irreducible curve.

\begin{Not} Let $X$ be a surface. The divisor class group $\Cl(X)$ is the free abelian
group $G$ on the prime divisors, modulo linear equivalence. By intersection theory we have
a bilinear form on $G$ which descends to $\Cl(X)$. Two divisors which induce the same intersections
on curves (which in our situation are themselves divisors, since $X$ is a surface)
are said to be {\it numerically equivalent}.
We will denote numerical equivalence on divisors by $\sim$; thus $D_1\sim D_2$ means that
$D_1\cdot C=D_2\cdot C$ for all curves $C$. We denote $\Cl(X)$ modulo
numerical equivalence by $\Num(X)$. 
Also, $\EFF(X)$ denotes the submonoid of $\Cl(X)$
consisting of the classes of effective divisors on $X$, and $\NEF(X)$ denotes the
submonoid of $\Cl(X)$ of all classes $F$ such that $F\cdot C\ge0$ for all
$C\in\EFF(X)$.
\end{Not}

A basic fact is that $\Num(X)$ is a free abelian group of finite rank.
If $X$ is rational, then $\Num(X)=\Cl(X)$. If $X$ is obtained by blowing up points 
$p_1,\ldots,p_r\in\pr2$, for example, then $\Cl(X)$ is the
free abelian group on the class $L$ of the pullback of a line and on the classes
$E_i$ of the blowings up of the points $p_i$, hence $\Cl(X)$ has rank $r+1$.
The intersection form on $\Cl(X)$ in this case is defined by
$-L^2=E_i^2$ with $L\cdot E_i=E_j\cdot E_i=0$ for all $i$ and all $j\ne i$, and the canonical class is
$K_X=-3L+E_1+\cdots +E_r$.

We now recall the Adjunction Theorem and Riemann-Roch for surfaces:

\begin{thm}[Adjunction] Let $C$ be a prime divisor on a surface $X$ and let $K_X$ 
be the canonical class on $X$. 
Then there is a non-negative integer $p_C$ such that $C^2=2p_C-2-C\cdot K_X$.
\end{thm}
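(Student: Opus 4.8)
The plan is to read the formula off the Euler characteristic of the structure sheaf $\mathcal{O}_C$, using the Riemann--Roch theorem for surfaces (the companion statement being recalled alongside this one). First I would write down the fundamental exact sequence
\[
0 \to \mathcal{O}_X(-C) \to \mathcal{O}_X \to \mathcal{O}_C \to 0,
\]
which is legitimate because a prime divisor $C$ on the smooth surface $X$ is an effective Cartier divisor. Additivity of the Euler characteristic on short exact sequences then gives $\chi(\mathcal{O}_C) = \chi(\mathcal{O}_X) - \chi(\mathcal{O}_X(-C))$.

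Next I would apply Riemann--Roch for surfaces in the form $\chi(\mathcal{O}_X(D)) = \chi(\mathcal{O}_X) + \tfrac12(D^2 - D\cdot K_X)$ to the divisor $D = -C$. Since $D^2 = C^2$ and $D\cdot K_X = -C\cdot K_X$, this reads $\chi(\mathcal{O}_X(-C)) = \chi(\mathcal{O}_X) + \tfrac12(C^2 + C\cdot K_X)$. Substituting into the identity above, the $\chi(\mathcal{O}_X)$ terms cancel and leave
\[
\chi(\mathcal{O}_C) = -\tfrac12\bigl(C^2 + C\cdot K_X\bigr).
\]
Solving for $C^2$ gives $C^2 = -2\chi(\mathcal{O}_C) - C\cdot K_X$, so the asserted formula holds with $p_C = 1 - \chi(\mathcal{O}_C)$.

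The remaining point, which I expect to carry the real content, is verifying that $p_C$ is a non-negative integer; the arithmetic identity alone only controls $2p_C$. Integrality is clear, since $\chi(\mathcal{O}_C) = h^0(C,\mathcal{O}_C) - h^1(C,\mathcal{O}_C)$ is a genuine integer, forcing $C^2 + C\cdot K_X$ to be even. For non-negativity I would use that $C$ is reduced and irreducible, hence $H^0(C,\mathcal{O}_C) = k$ and $h^0(C,\mathcal{O}_C) = 1$; consequently $p_C = 1 - \chi(\mathcal{O}_C) = h^1(C,\mathcal{O}_C) \ge 0$, which identifies $p_C$ as the arithmetic genus of $C$.

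The one subtlety worth flagging is that a prime divisor is only required to be reduced and irreducible, not smooth, so I cannot invoke the naive adjunction isomorphism $\omega_C \cong (\omega_X \otimes \mathcal{O}_X(C))\big|_C$ for the canonical sheaf of $C$ directly. The advantage of the Euler-characteristic computation is precisely that it never needs $C$ to be smooth: it uses only that $C$ is an effective Cartier divisor on a smooth surface, so it applies verbatim to singular prime divisors, with $p_C = h^1(\mathcal{O}_C)$ serving as the arithmetic genus throughout. For smooth $C$ one could instead derive the identity from the conormal exact sequence and take degrees of line bundles on $C$, but the cohomological route is shorter and, more importantly, uniform across the singular case.
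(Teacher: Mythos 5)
Your proof is correct. Note that the paper does not actually prove this statement: it is recalled as background (``We now recall the Adjunction Theorem and Riemann--Roch for surfaces''), so there is no proof to compare against. Your derivation is the standard one: the structure sequence $0 \to \mathcal{O}_X(-C) \to \mathcal{O}_X \to \mathcal{O}_C \to 0$, additivity of $\chi$, and Riemann--Roch applied to $-C$ yield $\chi(\mathcal{O}_C) = -\tfrac12\left(C^2 + C\cdot K_X\right)$, and the identification $p_C = 1 - \chi(\mathcal{O}_C) = h^1(C,\mathcal{O}_C) \ge 0$ is justified exactly as you say, since $C$ integral and projective over the algebraically closed field $k$ forces $H^0(C,\mathcal{O}_C) = k$. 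You are also right to flag that this route is preferable to the conormal-sequence argument precisely because a prime divisor need not be smooth; the cohomological argument needs only that $C$ is an effective Cartier divisor, which holds since $X$ is smooth, and it exhibits $p_C$ as the arithmetic genus in all cases.
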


\begin{thm}[Riemann-Roch]\label{RR} Given any divisor $D$ on a surface $X$,
let $\chi(\OO_X(D))$ denote $h^0(X,\OO_X(D))-h^1(X,\OO_X(D))+h^2(X,\OO_X(D))$.
Then 
$$\chi(\OO_X(D))=\frac{D^2-K_X\cdot D}{2}+\chi(\OO_X).$$
\end{thm}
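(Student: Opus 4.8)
The plan is to reduce the assertion to the Riemann--Roch theorem on curves, using an integral curve $C\subset X$ and its structure sequence to relate Euler characteristics on $X$ to a degree computation on $C$, and then to propagate the resulting identity from the class of $D$ down to the zero class, where it holds by inspection.

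First I would fix a prime divisor $C$ on $X$ and tensor the ideal sheaf sequence $0\to\OO_X(-C)\to\OO_X\to\OO_C\to0$ by $\OO_X(D)$, obtaining
$$0\to\OO_X(D-C)\to\OO_X(D)\to\OO_X(D)|_C\to0.$$
Since $\chi$ is additive on short exact sequences, this yields $\chi(\OO_X(D))-\chi(\OO_X(D-C))=\chi(\OO_X(D)|_C)$. I would then evaluate the right-hand side by Riemann--Roch on the integral curve $C$: the restriction has degree $D\cdot C$, so its Euler characteristic is $D\cdot C+\chi(\OO_C)=D\cdot C+1-p_C$, where $p_C$ is the arithmetic genus appearing in the Adjunction Theorem. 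Using Adjunction to substitute $1-p_C=-\frac{1}{2}(C^2+K_X\cdot C)$ gives
$$\chi(\OO_X(D))-\chi(\OO_X(D-C))=D\cdot C-\frac{1}{2}(C^2+K_X\cdot C).$$

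The crux is that the right-hand side is precisely the increment of the proposed formula. Setting $Q(D)=\frac{1}{2}(D^2-K_X\cdot D)$, a short expansion of $Q(D)-Q(D-C)$ returns exactly $D\cdot C-\frac{1}{2}(C^2+K_X\cdot C)$. Hence the defect $f(D):=\chi(\OO_X(D))-Q(D)$ satisfies $f(D)=f(D-C)$ for every divisor $D$ and every prime divisor $C$; replacing $D$ by $D+C$ shows $f$ is likewise unchanged when a prime divisor is added. Because $\OO_X(D)$ depends only on the linear class of $D$ while $Q$ depends only on the numerical class, $f$ is well defined on classes and is invariant under adding or subtracting any effective divisor.

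Finally I would reduce an arbitrary $D$ to $0$. Fixing a very ample $H$, for $m\gg0$ both $mH$ and $D+mH$ are very ample, so by Bertini their general members $B$ and $A$ are smooth irreducible curves---hence prime divisors---with $\OO_X(D)\cong\OO_X(A-B)$. Then $f(D)=f(A-B)=f(A)=f(0)=\chi(\OO_X)$, obtained by first adding the prime divisor $B$ and then subtracting the prime divisor $A$, which proves the formula. I expect the main obstacle to lie in this reduction step: the structure-sequence argument applies only to genuine curves, so one must know that every class is a difference of classes of smooth irreducible curves, and this is exactly where Bertini's theorem (guaranteeing smooth, irreducible members of a very ample system on the surface) is essential. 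The additivity of $\chi$ and the curve Riemann--Roch input are standard, but I would want to confirm carefully that $f$ descends to classes so that the passage through $A-B$ is legitimate.
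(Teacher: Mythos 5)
The paper gives no proof of this theorem: Riemann--Roch for surfaces is recalled there purely as background, alongside Adjunction, so there is no argument of the paper's to compare yours against. Judged on its own terms, your proof is correct, and it is the standard one (essentially Hartshorne, Theorem V.1.6): additivity of $\chi$ along the restriction sequence $0\to\OO_X(D-C)\to\OO_X(D)\to\OO_X(D)|_C\to 0$, Riemann--Roch on the curve plus Adjunction to show the increment of $\chi(\OO_X(D))$ equals that of $Q(D)=\frac{1}{2}(D^2-K_X\cdot D)$, and a Bertini reduction writing $D\sim A-B$ with $A,B$ smooth irreducible curves, so that the defect $f$ can be walked down to $f(0)=\chi(\OO_X)$. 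Two refinements would tighten it. First, you state the increment lemma for arbitrary prime divisors $C$; in that generality it invokes Riemann--Roch, the compatibility $\deg\bigl(\OO_X(D)|_C\bigr)=D\cdot C$, and Adjunction with $p_C=1-\chi(\OO_C)$ on possibly singular integral curves, and the singular-curve adjunction formula is itself most commonly derived \emph{from} surface Riemann--Roch. It is cleaner, and avoids any appearance of circularity, to state the lemma only for smooth irreducible curves --- which is all you ever use, since the only curves added or subtracted in your reduction are the Bertini members $A$ and $B$. Second, because the paper works over an algebraically closed field of arbitrary characteristic, you should note that the Bertini ingredients survive in characteristic $p$: smoothness of the general member of a very ample system holds over any algebraically closed field, and irreducibility follows from that smoothness together with connectedness of effective ample divisors on a projective surface.
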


Riemann-Roch becomes especially useful when taken together with Serre duality,
which for a surface $X$ says that $h^i(X,\OO_X(D))=h^{2-i}(X,\OO_X(K_X-D))$.
Castelnuovo's criterion for rationality is also useful: 

\begin{thm}[Castelnuovo]\label{CCR} A surface $X$ is rational if and only if
$h^0(X,\OO_X(2K_X))=h^1(X,\OO_X)=0$.
\end{thm}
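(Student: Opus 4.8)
The plan is to treat the two implications separately, with the reverse (sufficiency) direction carrying essentially all of the difficulty. For the forward direction, I would argue that both $h^1(X,\OO_X)$ and $h^0(X,\OO_X(2K_X))$ are birational invariants among smooth projective surfaces, and then compute them on $\pr2$. The key input is that a birational map between smooth projective surfaces factors as a composition of blow-ups and blow-downs, so it suffices to check invariance under a single blow-up $\pi\colon \tilde X\to X$ at a point, with exceptional curve $E$. There $K_{\tilde X}=\pi^*K_X+E$, hence $2K_{\tilde X}=\pi^*(2K_X)+2E$; since $\pi_*\OO_{\tilde X}(2E)=\OO_X$ one gets $h^0(\tilde X,\OO_{\tilde X}(2K_{\tilde X}))=h^0(X,\OO_X(2K_X))$, while $R^1\pi_*\OO_{\tilde X}=0$ gives $h^1(\tilde X,\OO_{\tilde X})=h^1(X,\OO_X)$. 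Since a rational $X$ is birational to $\pr2$, where $K=-3L$ so that $2K=-6L$ is not effective and $h^1(\OO_{\pr2})=0$, both invariants vanish.

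For the converse, assume $h^0(X,\OO_X(2K_X))=h^1(X,\OO_X)=0$. I first note that $h^0(X,\OO_X(K_X))=0$ as well, since squaring a nonzero section of $K_X$ would produce a nonzero section of $2K_X$; hence $\chi(\OO_X)=1$. The strategy I would follow is the classical one: produce on (a blow-up of) $X$ a base-point-free pencil whose general member is a smooth rational curve, that is, a morphism to $\pr1$ with general fiber $\pr1$. Granting such a pencil, the Noether--Enriques theorem identifies $X$ birationally with $\pr1\times B$ for a smooth curve $B$; the hypothesis $h^1(X,\OO_X)=0$ then forces $B$ to have genus $0$, since by the K\"unneth formula the irregularity of $\pr1\times B$ equals the genus of $B$. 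Thus $B\cong\pr1$ and $X$ is rational.

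Producing the rational pencil is the heart of the matter and the step I expect to be the main obstacle. Here I would exploit Riemann--Roch (Theorem~\ref{RR}) together with Serre duality: for example $\chi(\OO_X(-K_X))=K_X^2+1$, while $h^2(X,\OO_X(-K_X))=h^0(X,\OO_X(2K_X))=0$, so that $h^0(X,\OO_X(-K_X))\ge K_X^2+1$. The classical argument then runs a descent: one locates an irreducible curve moving in a linear system of positive dimension, and by repeatedly adjusting it, bounding its arithmetic genus via the Adjunction Theorem and its self-intersection via Riemann--Roch, one arrives at a pencil of genus-$0$ curves. Controlling irreducibility and arithmetic genus simultaneously, and handling the case distinction governed by the sign of $K_X^2$, is where the real work lies; the delicate borderline case is the Enriques-type surface, for which $h^0(K_X)=h^1(\OO_X)=0$ yet $2K_X$ is trivial, and it is precisely the hypothesis $h^0(X,\OO_X(2K_X))=0$ that excludes it.

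Alternatively, one could pass to a minimal model, which preserves both invariants, and appeal to the Enriques--Kodaira classification. Kodaira dimension $2$ forces $h^0(X,\OO_X(2K_X))>0$; each surface of Kodaira dimension $0$ or $1$ has either $h^1(X,\OO_X)>0$ or $h^0(X,\OO_X(2K_X))>0$, with Enriques surfaces again the critical example; this leaves only Kodaira dimension $-\infty$, where vanishing irregularity yields rationality. This route is cleaner to state but presupposes the classification, so I would present the direct construction of the pencil as the primary argument.
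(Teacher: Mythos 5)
The paper does not prove this theorem: Castelnuovo's criterion is recalled, without proof, as one of several classical background tools (alongside Adjunction, Riemann--Roch, Serre duality and the Hodge Index Theorem). So there is no argument of the paper's to compare yours against; your proposal has to stand on its own.

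On its own terms, your plan has the right skeleton --- it is the standard Castelnuovo argument --- but it is a plan rather than a proof, and the gap sits exactly where you yourself flag it. The forward direction is essentially complete: birational invariance of $h^0(2K)$ and $h^1(\OO_X)$ under a single blow-up (via $\pi_*\OO_{\tilde X}(2E)=\OO_X$ and $R^1\pi_*\OO_{\tilde X}=0$), factorization of birational maps of smooth surfaces into blow-ups and blow-downs, and the computation on $\pr2$. In the converse, however, everything of substance is delegated to ``the classical descent'': you never show that adjunction terminates, that the relevant linear system has an irreducible member, or that the genus actually drops to zero; and you never exhibit concretely how the hypothesis $h^0(X,\OO_X(2K_X))=0$ (as opposed to merely $h^0(X,\OO_X(K_X))=0$) enters that descent --- you only observe that it must, because Enriques surfaces satisfy $p_g=q=0$. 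That missing step is precisely the content of the theorem (several pages in Beauville's book, for instance), and Noether--Enriques is a further substantial black box you import without proof. Two cautions if you do carry the plan out: the paper's standing convention is an algebraically closed field of arbitrary characteristic, where the criterion is Zariski's theorem and the argument is more delicate (one cannot identify $q$ with $\frac{1}{2}b_1$); and your fallback route through the Enriques--Kodaira classification is circular in spirit, since the classification of surfaces with $\kappa=-\infty$ is normally established by means of Castelnuovo's criterion itself.
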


Next, we recall the Hodge Index Theorem:

\begin{thm}[HIT]\label{HIT} Given a surface $X$ and $D\in\Num(X)$ with $D^2>0$,
then the intersection form on the space $D^{\perp}\subseteq \Num(X)$ of classes $F$ with $F\cdot D=0$
is negative definite.
\end{thm}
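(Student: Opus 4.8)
The plan is to reduce the statement to the sharper claim that the intersection form on the finite-dimensional real vector space $\Num(X)\otimes\R$ has signature $(1,\rho-1)$, where $\rho$ denotes its rank; that is, the form is nondegenerate with exactly one positive eigenvalue. Nondegeneracy is built into the definition of $\Num(X)$, since we have quotiented $\Cl(X)$ by the radical of the pairing, so by Sylvester's law of inertia the form has some signature $(s_+,s_-)$ with $s_++s_-=\rho$. Granting $s_+=1$, the theorem follows quickly: if $D^2>0$ then $D\ne 0$ and the line $\R D$ is positive definite, and since $D^2\ne0$ one gets an orthogonal decomposition $\Num(X)\otimes\R=\R D\oplus D^{\perp}$ with the form nondegenerate on each summand. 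The summand $\R D$ accounts for the unique positive eigenvalue, so the complementary form on $D^{\perp}$ must have signature $(0,\rho-1)$, i.e. it is negative definite, which is exactly what is asserted.

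It remains to prove $s_+=1$. That $s_+\ge1$ is immediate: $X$ is projective, so it carries an ample class $H$, and an ample class satisfies $H^2>0$, giving a positive direction. The substance is the inequality $s_+\le1$, i.e. that there is no $2$-dimensional subspace on which the form is positive definite. The main tool here is Riemann--Roch (Theorem \ref{RR}) together with Serre duality. First I would record the following asymptotic consequence: if $F$ is a class with $F^2>0$ and $F\cdot H>0$, then some positive multiple $mF$ is effective. Indeed Theorem \ref{RR} gives $h^0(mF)+h^2(mF)\ge\chi(\OO_X(mF))=\tfrac{1}{2}\big(m^2F^2-mK_X\cdot F\big)+\chi(\OO_X)$, whose right-hand side tends to $+\infty$ with $m$ because $F^2>0$; meanwhile Serre duality identifies $h^2(mF)$ with $h^0(K_X-mF)$, which vanishes for $m\gg0$ since $(K_X-mF)\cdot H<0$ forbids effectivity against the ample $H$. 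Hence $h^0(mF)\to\infty$.

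For $s_+\le1$, suppose toward a contradiction that the form were positive definite on some plane $V$. Then its restriction to the rational hyperplane $H^{\perp}$ takes positive values on the line $V\cap H^{\perp}$, so the locus $\{x^2>0\}$ in $H^{\perp}$ is a nonempty open subset of a rational subspace and therefore contains an integral class $v$ with $v^2>0$ and $v\cdot H=0$. Now $v\cdot H=0$ already shows that no positive multiple of $v$ or of $-v$ can be effective, since a nonzero effective class meets the ample $H$ positively. On the other hand the estimate above, applied to both $v$ and $-v$, forces $K_X-mv$ and $K_X+mv$ to be effective for all large $m$. The hard part is to turn this into a contradiction: the naive comparison $(K_X-mv)\cdot(K_X+mv)=K_X^2-m^2v^2\to-\infty$ is not by itself absurd, because two effective divisors may meet negatively when they share components. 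I expect this to be the main obstacle, and the way I would resolve it is to observe that any such common component must occur in the single fixed effective divisor $2K_X=(K_X-mv)+(K_X+mv)$, of which there are only finitely many; peeling these components off by descent (equivalently, invoking the boundedness of the set of effective numerical classes of bounded $H$-degree) reduces matters to the already-excluded case that a positive multiple of $\pm v$ is effective. This contradiction yields $s_+\le1$ and completes the proof.
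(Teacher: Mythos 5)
The paper itself does not prove Theorem \ref{HIT}; it is recalled as background (the standard reference is \cite[Theorem V.1.9]{refHr}), so your proposal has to be measured against the classical argument rather than against anything in the text. Up to the last step your outline is sound: the reduction to the signature statement, the production of an integral class $v$ with $v^2>0$ and $v\cdot H=0$, the observation that no positive multiple of $\pm v$ can be effective, and the conclusion that $h^0(K_X-mv)$ and $h^0(K_X+mv)$ both tend to infinity are all correct. (One small point: what you apply to $v$ is not literally your effectivity estimate, whose hypothesis $F\cdot H>0$ fails for $v$; it is the underlying inequality $h^0+h^2\ge\chi$ together with Serre duality and the vanishing of $h^0(mv)$ and $h^0(-mv)$.)

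The genuine gap is exactly at the point you flag as the main obstacle, and the repair you sketch does not work as stated. First, $2K_X$ is a divisor \emph{class}, not ``a single fixed effective divisor'': the linear system $|2K_X|$ may have positive dimension, infinitely many distinct curves can occur as components of its members, and so there is no a priori finite list of potential common components of $K_X-mv$ and $K_X+mv$. Second, the fallback you invoke --- finiteness of the set of effective numerical classes of bounded $H$-degree --- is true but is not available here: its standard proof \emph{uses} the Hodge Index Theorem (negative definiteness on $H^{\perp}$ is what confines such classes to a bounded region of the lattice), so the argument becomes circular, while the HIT-free proofs go through boundedness of Hilbert schemes or Chow varieties, machinery far heavier than the theorem being proved. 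Either of two classical devices closes the gap using only tools you already have. (i) Hartshorne's: $H'=v+nH$ is ample for $n\gg0$ (openness of the ample cone, or Nakai--Moishezon), and $v\cdot H'=v^2>0$, so your own effectivity estimate applied with $H'$ in place of $H$ makes $mv$ effective for $m\gg0$, contradicting $mv\cdot H=0$ --- no common-component analysis is needed at all. (ii) The addition-map estimate: for effective classes $A,B$ one has $h^0(A+B)\ge h^0(A)+h^0(B)-1$, because the sum map $|A|\times|B|\to|A+B|$ has finite fibers (an effective divisor admits only finitely many decompositions into two effective summands); taking $A=K_X-mv$ and $B=K_X+mv$ gives $h^0(2K_X)\ge h^0(K_X-mv)+h^0(K_X+mv)-1\to\infty$, absurd since $h^0(2K_X)$ is a fixed finite number.
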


Finally, we recall the semicontinuity principle. We say points $p_1,\ldots,p_r$ are {\it essentially distinct\/} points of 
a surface $X$, and that $X_{r+1}$ is the blow up of $X$ at $p_1,\ldots,p_r$, 
if $p_1\in X=X_1$, $\pi_1:X_2\to X_1$ is the blow up of $X_1$ at $p_1$, and
for $1<i\le r$ we have $p_i\in X_i$, and $\pi_i:X_{i+1}\to X_i$ is the blow up of $X_i$ at $p_i$.
By identifying $X_{i+1}$ with $X_i$ away from $p_i$, we can regard $p_{i+1}$ as being in $X_i$ when
$\pi_i(p_{i+1})\ne p_i$. In this way distinct points $p_1,\ldots,p_r\in X$ can be regarded as being essentially
distinct. Let $\pi_{j,i}:X_j\to X_i$ be the morphism $\pi_{j-1}\circ\cdots\circ\pi_i$ whenever $j>i$,
and let $E_i$ be the divisor (or divisor class, depending on context) given by $\pi_{r,i}^{-1}(p_i)$.
When $X=\pr2$, let $L$ be the pullback to $X_{r+1}$ of a general line on $X$.
Otherwise we will assume $L$ is the pullback to $X_{r+1}$ of some ample divisor on $X$.

\begin{thm}[Semicontinuity Principle]\label{SCprin} Let $a,a_1,\ldots,a_r$ be
integers, let $p_1,\ldots,p_r$ be general points of $X=\pr2$ and denote by $X_{r+1}$ the blow 
up of $X$ at $p_1,\ldots,p_r$, with $L,E_1,\ldots,E_r$ being the usual associated classes.
Also, given essentially distinct points $p_1',\ldots,p_r'$ of $X$, let $X'_{r+1}$ denote the blow 
up of $X$ at $p_1',\ldots,p_r'$, and let $L',E_1',\ldots,E_r'$ denote the associated classes.
\begin{itemize}
\item[(a)] If $aL-\sum_ia_iE_i\in\EFF(X_{r+1})$, 
then $aL'-\sum_ia_iE_i'\in\EFF(X'_{r+1})$ for every choice of essentially distinct points 
$p_1',\ldots,p_r'$ of $X$. 
\item[(b)] If $aL'-\sum_ia_iE_i'\in\NEF(X'_{r+1})$ for some choice of essentially distinct points 
$p_1',\ldots,p_r'$ of $X$ and if $(aL'-\sum_ia_iE_i')^2>0$, 
then $aL-\sum_ia_iE_i\in\NEF(X_{r+1})$.
\end{itemize}
\end{thm}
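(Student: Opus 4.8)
The plan is to deduce both parts from the semicontinuity of cohomology in a single flat family of blow-ups, using that the intersection form on any blow-up of $\pr2$ is the fixed form $L^2=1$, $E_i^2=-1$, $L\cdot E_i=E_i\cdot E_j=0$, independent of the position of the points. First I would build an irreducible parameter space $B$ of all configurations of $r$ essentially distinct points of $X=\pr2$, carrying a universal blow-up $\mathcal X\to B$ together with universal classes $\mathcal L,\mathcal E_1,\dots,\mathcal E_r$. One constructs $B$ as a tower: set $B_1=X$ and, having the universal surface $\mathcal X_i\to B_i$, let $B_{i+1}=\mathcal X_i$ (a point of $\mathcal X_i$ being precisely a choice of $(i+1)$-st, possibly infinitely near, point) and let $\mathcal X_{i+1}$ be the blow-up of $\mathcal X_i\times_{B_i}B_{i+1}$ along the tautological section. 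Each step blows up an irreducible variety, so $B$ is irreducible; its generic point corresponds to $r$ general points of $\pr2$, while every essentially distinct configuration occurs as some closed fiber. Writing $\mathcal D=a\mathcal L-\sum_i a_i\mathcal E_i$, the sheaf $\OO_{\mathcal X}(\mathcal D)$ is flat over $B$, so cohomology and base change apply fiber by fiber.

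For (a) I would apply the upper semicontinuity of the function $b\mapsto h^0(\mathcal X_b,\OO(\mathcal D_b))$. On the irreducible base $B$ this function attains its minimum on a dense open set, in particular at the generic point, which by construction is the blow-up $X_{r+1}$ at general points of $\pr2$. Thus $h^0(X_{r+1},\OO(aL-\sum_i a_iE_i))$ is the minimum value, and if it is positive then $h^0(\mathcal X_b,\OO(\mathcal D_b))>0$ for every $b\in B$. In particular $aL'-\sum_i a_iE_i'\in\EFF(X'_{r+1})$ for every essentially distinct configuration $p_1',\dots,p_r'$, which is exactly (a).

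For (b) I would argue by duality from (a). Since $\Cl(\mathcal X_b)$ is freely generated by $L,E_1,\dots,E_r$ at every $b$, part (a) asserts precisely that $\EFF(X_{r+1})\subseteq\EFF(X'_{r+1})$ for each $X'_{r+1}$. As $\NEF$ is by definition the dual $\EFF^\vee$ taken with respect to the configuration-independent intersection form, reversing this inclusion yields $\NEF(X'_{r+1})\subseteq\NEF(X_{r+1})$; feeding in the given configuration, the hypothesis $aL'-\sum_i a_iE_i'\in\NEF(X'_{r+1})$ then places the identical class $aL-\sum_i a_iE_i$ in $\NEF(X_{r+1})$. In contrapositive form the same point is concrete: were $D=aL-\sum_i a_iE_i$ not nef on $X_{r+1}$, some irreducible curve $C$ would satisfy $D\cdot C<0$; by (a) the class of $C$ stays effective on $X'_{r+1}$, and since intersection numbers are configuration-independent the resulting effective $C'$ would give $D'\cdot C'=D\cdot C<0$, contradicting nefness of $D'$. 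The hypothesis $(aL'-\sum_i a_iE_i')^2>0$ enters by making the constant self-intersection $(\mathcal D_b)^2$ positive, so that the nef class $D$ produced on $X_{r+1}$ is moreover big.

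I expect the only real difficulty to be Step 1: arranging the universal family so that infinitely near points are handled correctly—so that the $\mathcal E_i$ and $\OO_{\mathcal X}(\mathcal D)$ are genuinely defined and flat over $B$—and verifying that the generic fiber is the blow-up at $r$ general points, so that ``general'' in the statement matches ``generic'' on $B$. Granting the family, parts (a) and (b) are formal consequences of the upper semicontinuity of $h^0$ and of the definition $\NEF=\EFF^\vee$, respectively.
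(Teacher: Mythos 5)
Your part (a) is essentially the paper's own proof: your tower $B_{i+1}=\mathcal{X}_i$ with $\mathcal{X}_{i+1}$ the blow-up of $\mathcal{X}_i\times_{B_i}B_{i+1}$ along the tautological section is exactly the Kleiman-style family the paper uses, followed by the same appeal to upper semicontinuity of $h^0$ over an irreducible base. The problem is part (b).

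Your argument for (b) --- dualize the inclusion of effective cones, or contrapositively: if $D=aL-\sum_ia_iE_i$ were not nef then some prime $C$ with $D\cdot C<0$ would remain effective on $X'_{r+1}$ by (a) and meet $D'$ negatively --- is precisely the paper's proof of Theorem \ref{SCprin2}(b), the \emph{generic}-points version, and that is all it proves. What it establishes is that $D$ is nef on the geometric generic fiber of the family, equivalently for configurations outside a \emph{countable union} of proper closed subsets: for each of the countably many integer classes $C$ with $D\cdot C<0$, the locus where $C$ becomes effective is closed (by semicontinuity), and the non-nef locus is the union of these closed sets. Theorem \ref{SCprin}(b) claims strictly more: nefness on a dense \emph{open} subset of configurations (``general points''), and a countable union of proper closed subsets need not lie in any proper closed subset. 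Seen from the other side, the same slip appears when you invoke (a) for your curve $C$: the hypothesis of (a) is that $C$ is effective \emph{for general points} (i.e., at the generic point of the family), whereas you only know $C$ is effective on the one surface $X_{r+1}$ where you found it --- and $C$ was not among the conditions available when the open set defining ``general'' was chosen. So the quantifiers do not close up.

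This is exactly the gap that the hypothesis $(aL'-\sum_ia_iE_i')^2>0$ --- which, as you noticed, your argument never uses, and which you reinterpreted as merely giving bigness --- is there to repair. In the paper's proof, $D^2=(D')^2>0$ together with $a=D\cdot L>0$ (forced by $D'$ nef of positive square) gives, via Riemann--Roch and vanishing of $h^2$, that $sD\in\EFF(X_{r+1})$ for $s\gg0$. Nefness of $D$ then needs to be tested only against the \emph{finitely many} prime components of one effective divisor in the class $sD$, since any other prime divisor automatically meets the effective class $sD$ non-negatively. Each of those finitely many components remains prime on an open set of configurations and, being effective for general points, specializes by (a) to an effective divisor on $X'_{r+1}$, where it meets the nef class $D'$ non-negatively; since intersection numbers are configuration-independent, $D$ meets each component non-negatively and is therefore nef --- and because only finitely many open conditions were imposed, the conclusion holds on a dense open set, as the theorem asserts. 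Without $(D')^2>0$ there is no finite list of test curves and no openness; this is why the paper states the generic-points version as a separate theorem and remarks in its proof that there one is ``not claiming that having a specific divisor be nef is an open condition on the points.''
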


\begin{proof} Following \cite{refK}, parameterize essentially distinct points of $X$ by
schemes $W_i$ where we set $b_0:W_1\to W_0$ to be $X\to\Spec{(k)}$
and recursively we define $W_{i+1}\to W_i\times_{W_{i-1}}W_i$ to be the blow up of the diagonal
in $W_i\times_{W_{i-1}}W_i$, setting $b_i:W_{i+1}\to W_i$ to be the composition of
$W_{i+1}\to W_i\times_{W_{i-1}}W_i$ with the projection $\pi_{1i}: W_i\times_{W_{i-1}}W_i\to W_i$ to the first factor. The morphisms $b_i$ are smooth (\cite[17.3, 19.4]{refEGA}). [Here is a proof.
Note $b_0$ is smooth (since $X$ is). Assuming $b_{i-1}$ is smooth, we see the projection
$W_i\times_{W_{i-1}}W_i\to W_i$ is smooth (\cite[Proposition III.10.1(d)]{refHr}), and the exceptional locus
$B_{i+1}\subset W_{i+1}$ for $b_i$ is smooth (\cite[Theorem II.8.24(b)]{refHr}) and locally isomorphic to
$W_i\times\pr1$. We now see that $b_i$ is smooth by checking surjectivity of the induced maps on Zariski
tangent spaces (\cite[Proposition III.10.4(iii)]{refHr}) at points $x\in W_{i+1}$. Away from $B_{i+1}$,
$W_{i+1}\to W_i\times_{W_{i-1}}W_i$ is an isomorphism and $W_i\times_{W_{i-1}}W_i\to W_i$ 
is smooth, hence surjectivity follows for points $x\not\in B_{i+1}$. At points $x\in B_{i+1}$, 
the composition $B_{i+1}\subset W_{i+1}\to W_i$ is smooth and thus the map
on tangent spaces induced by $B_{i+1}\to W_i$ is already surjective at $x$, hence so is the one induced by
$W_{i+1}\to W_i$. Thus $b_i$ is smooth.]

Consider the pullbacks $B'_i$ to $W_{r+1}$ of the divisors $B_i$. For any ample divisor $L$ on $X$, let $L''$
be the pullback to $W_{r+1}$ via the blow ups $b_i$ and the projections $\pi_{2i}$ 
on the second factors. Let $\shf{F}=\OO_{W_{r+1}}(aL''-\sum_ia_iB'_i)$.
Then for any essentially distinct points $p_1',\ldots,p_r'$ of $X$ we have a uniquely determined
point $w\in W_r$, the fiber $(W_{r+1})_w$ of $W_{r+1}$ over $w$ is $X'_{r+1}$,
and the restriction $\shf{F}_w$ of $\shf{F}$ to $(W_{r+1})_w$ is $\OO_{X'_{r+1}}(aL'-\sum_ia_iE_i')$.
By the semicontinuity theorem (\cite[Theorem III.12.8]{refHr}), 
$h^0((W_{r+1})_w, \shf{F}_w)$ is an upper semicontinuous function of $w$. This implies (a).

Now consider (b). If $(aL-\sum_ia_iE_i)^2=(aL'-\sum_ia_iE_i')^2>0$,
we have $s(aL-\sum_ia_iE_i)\in\EFF(X_{r+1})$ for some $s\gg0$. Pick some effective divisor $C$
whose class is $s(aL-\sum_ia_iE_i)$. For each prime divisor component $D$
of $C$, there is an open set of points $p_i$ for which $D$ remains prime,
since being effective is a closed condition by (a), and
since for only finitely many classes $D'=a'L-\sum_ia_i'E_i$ could $D'$ and the class of $D-D'$ 
(or even $s(aL-\sum_ia_iE_i)-D'$ in place of $D-D'$) both
conceivably be classes of effective divisors. Thus the decomposition of 
$C$ as a sum of prime divisors is well-defined for general points,
and each component
specializes to an effective divisor on $X'_{r+1}$ which thus
meets $aL'-\sum_ia_iE_i'$, and hence $aL-\sum_ia_iE_i$, 
non-negatively, so $aL-\sum_ia_iE_i\in\NEF(X_{r+1})$, proving (b).
\end{proof}

Here is a version of the same result stated for generic points, where $X$ now is any surface and
$L$ comes via pullback from some ample divisor on $X$:

\begin{thm}[Semicontinuity Principle 2]\label{SCprin2} Let $a,a_1,\ldots,a_r$ be
integers, let $p_1,\ldots,p_r$ be generic points of a surface $X$
and denote by $X_{r+1}$ the blow 
up of $X$ at $p_1,\ldots,p_r$, with $E_1,\ldots,E_r$ being the usual associated classes
and $L$ the pullback to $X_{r+1}$ from $X$ of some ample divisor on $X$.
Also, given essentially distinct points $p_1',\ldots,p_r'$ of $X$, let $X'_{r+1}$ denote the blow 
up of $X$ at $p_1',\ldots,p_r'$, and let $L',E_1',\ldots,E_r'$ denote the associated classes.
\begin{itemize}
\item[(a)] If $aL-\sum_ia_iE_i\in\EFF(X_{r+1})$, 
then $aL'-\sum_ia_iE_i'\in\EFF(X'_{r+1})$ for every choice of essentially distinct points 
$p_1',\ldots,p_r'$ of $X$. 
\item[(b)] If $aL'-\sum_ia_iE_i'\in\NEF(X'_{r+1})$ for some choice of essentially distinct points 
$p_1',\ldots,p_r'$ of $X$, then $aL-\sum_ia_iE_i\in\NEF(X_{r+1})$.
\end{itemize}
\end{thm}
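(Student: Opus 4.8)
The plan is to reuse verbatim the universal family built in the proof of Theorem~\ref{SCprin}. With $X$ now an arbitrary surface we keep $b_0\colon W_1\to W_0$ equal to $X\to\Spec(k)$, construct $W_{r+1}\to W_r$ in the same way, and form $\shf{F}=\OO_{W_{r+1}}(aL''-\sum_i a_iB'_i)$, which restricts to $\OO_{X'_{r+1}}(aL'-\sum_i a_iE_i')$ over each closed point $w'\in W_r$ and to $\OO_{X_{r+1}}(aL-\sum_i a_iE_i)$ over the generic point $\eta$ of $W_r$, whose fiber is exactly the blow up of $X$ at generic points. Part~(a) is then identical to the one in Theorem~\ref{SCprin}: upper semicontinuity of $h^0$ makes $\{w : h^0(\shf{F}_w)>0\}$ closed, and if the class is effective on the generic fiber this set contains $\eta$, hence all of $\overline{\{\eta\}}=W_r$, which gives effectivity on every special fiber.

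The point of (b) is that with \emph{generic} points every closed point $w'$ is a specialization of $\eta$, and this is what lets us drop the positivity hypothesis. Fix $w'$ at which $D'=aL'-\sum_i a_iE_i'$ is NEF and let $\Gamma$ be an arbitrary prime divisor on the generic fiber $X_{r+1}$; I must show $D\cdot\Gamma\ge0$ where $D=aL-\sum_i a_iE_i$. First I would spread $\Gamma$ out to a closed subscheme $\widetilde{\Gamma}$, flat over a dense open $U\subseteq W_r$ with $\eta\in U$ and $\widetilde{\Gamma}_\eta=\Gamma$. Choosing a curve in $W_r$ through $w'$ that meets $U$ and passing to the local ring $R$ of its normalization at a point over $w'$ produces a trait $\Spec(R)\to W_r$ whose generic point lands in $U$ and whose closed point is $w'$; the scheme-theoretic closure of $\widetilde{\Gamma}$ over this trait is flat and proper over $R$ with one-dimensional fibers, so its closed fiber $\Gamma'$ is an effective divisor on $X'_{r+1}$.

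To conclude I would invoke constancy of the fiberwise degree of a line bundle in a flat proper family of curves: over $U$ this degree equals $D\cdot\Gamma$, and along the trait it equals $\deg(\shf{F}|_{\Gamma'})=D'\cdot\Gamma'$, so $D\cdot\Gamma=D'\cdot\Gamma'\ge0$ by nefness of $D'$ and effectivity of $\Gamma'$. Since $\Gamma$ was arbitrary, $D\in\NEF(X_{r+1})$, with no hypothesis on $D^2$. The main obstacle is precisely this specialization step: ensuring the flat limit $\Gamma'$ is an honest one-dimensional effective divisor rather than a cycle of smaller dimension, and that the fiberwise degree of $\shf{F}$ is genuinely constant across $U$ and along the trait, since that constancy is exactly what equates the intersection number computed on the generic fiber with the one on $X'_{r+1}$. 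By contrast, for the general (closed) points of Theorem~\ref{SCprin} no such fiber-to-fiber specialization is available, which is exactly why that argument had to assume $(aL'-\sum_i a_iE_i')^2>0$ in order to manufacture an effective multiple of the class whose prime components could then be tracked.
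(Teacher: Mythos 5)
Your part (a) coincides with the paper's (the paper simply reuses the proof of Theorem \ref{SCprin}(a), exactly as you do), but your part (b) is a genuinely different and correct route. The paper's proof of (b) is a three-line formal argument that leans entirely on (a): if $F=aL-\sum_ia_iE_i$ were not nef, there would be $C=c_0L-\sum_ic_iE_i\in\EFF(X_{r+1})$ with $F\cdot C<0$; by (a) the class $C'=c_0L'-\sum_ic_iE_i'$ lies in $\EFF(X'_{r+1})$, and since the intersection form takes the same values in the primed and unprimed bases (both $L$ and $L'$ pull back the same ample divisor), $F'\cdot C'=F\cdot C<0$, contradicting $F'\in\NEF(X'_{r+1})$. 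Note that this transfers only the \emph{class} of the offending curve, not the curve itself, so no flat-limit machinery is needed. You instead specialize the actual prime divisor $\Gamma$: spread it out over a dense open $U\subseteq W_r$, join $w'$ to $U$ by a curve, normalize to get a trait, take the flat limit $\Gamma'$, and use constancy of the fiberwise degree of $\shf{F}$ in flat proper families to get $D\cdot\Gamma=D'\cdot[\Gamma']\ge0$. This works, and the two obstacles you flag are in fact standard: flatness over the trait forces the limit to have the same Hilbert polynomial, hence to be one-dimensional and nonempty, and the quantity $\chi(\shf{F}|_Z)-\chi(\OO_Z)$ is constant in a flat proper family and computes the intersection of $D'$ with the associated effective $1$-cycle even when $\Gamma'$ is non-reduced or carries embedded points, so nefness of $D'$ finishes the argument. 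What your route buys is a geometric strengthening of (a) (effective divisors themselves, not just their classes, specialize) and a transparent explanation of why genericity removes the hypothesis $(aL'-\sum_ia_iE_i')^2>0$ of Theorem \ref{SCprin}(b): the generic point $\eta$ specializes to every closed point of $W_r$, whereas two distinct closed fibers admit no specialization relation between them --- your closing remark on this is exactly right. What it costs is the heavier apparatus (spreading out, generic flatness, traits, degree theory on non-reduced curves), all of which the paper's purely numerical argument avoids; both proofs share the same mild and standard imprecision of identifying the blow up at generic points with the fiber of $W_{r+1}\to W_r$ over $\eta$.
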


\begin{proof}
The proof of (a) is the same as for Theorem \ref{SCprin}(a). 
The proof for (b) is even simpler than before since now we are not claiming that
having a specific divisor $F=aL-\sum_ia_iE_i$ be nef is an open condition
on the points $p_i$. Instead, if $F$ were not nef, then $F\cdot C<0$ for some
$C=c_0L-\sum_ic_iE_i\in\EFF(X_{r+1})$, hence
$C'=c_0L'-\sum_ic_iE_i'\in\EFF(X_{r+1})$ by (a), so
$F'\cdot C'=F\cdot C<0$ for $F'=aL'-\sum_ia_iE_i'$, contradicting
our assumption that $F'\in \NEF(X'_{r+1})$. 
\end{proof}

\begin{Rmk} It is not hard to show that $F\in\NEF(X)$ implies $F^2\ge0$ (this is Exercise \ref{ex0}(a)). 
It is certainly possible, however, to have
$H\in\EFF(X)$ with $H^2<0$. The question of the extent to which this can happen is
the main motivation for these notes. If in fact there is no $H\in\EFF(X)$ with $H^2<0$,
then it is easy to see that $\EFF(X)\subseteq\NEF(X)$. It can also happen that
$\NEF(X)\subseteq\EFF(X)$, but in general neither containment holds.
For example, for $n>0$, the base curve $C$ on the Hirzebruch surface $H_n$
is effective but has $C^2=-n$ so is not nef. For an example of a nef divisor which
is not effective, see Exercise \ref{ex0}(b).
However, in Exercise \ref{ex0}(b), the class $F$ is in fact ample (see Exercise \ref{ex17}), 
thus some multiple of $F$ is effective (in fact $2F\in\EFF(X)$ by Riemann-Roch), but
divisors can be nef without being ample and without any multiple being effective.
For example, suppose $X$
is given by blowing up $r=s^2$ generic points $p_i\in\pr2$. Nagata  \cite{refN2} proved that
$h^0(X, \OO_X(mF))=0$ for all $m>0$ when
$F=sL-E_1-\cdots-E_r$ and $s>3$. But by specializing the points $p_i$
to general points of a smooth curve of degree $s$, we see that
$sL-E_1-\cdots-E_r$ is nef after specializing, and hence nef  to begin with by Theorem \ref{SCprin2}.
Thus for $r=s^2$ generic points $p_i$, $sL-E_1-\cdots-E_r$ is nef but 
not ample (since $F^2=0$), and, for each $m>0$,
$m(sL-E_1-\cdots-E_r)$ is not the class of an effective divisor. 
\end{Rmk}

\subsection{A Motivational Folklore Conjecture}\label{MFLC}

There is a long-standing open conjecture involving boundedness of negativity on surfaces. Let us say that 
a surface $X$ has {\it bounded negativity\/} if there is an integer $n_X$ such that $C^2\ge n_X$ for 
each prime divisor $C\subset X$.

\begin{Conj}[Folklore: The Bounded Negativity Conjecture]\label{folkloreconj} Every surface $X$ in characteristic 0
has bounded negativity.
\end{Conj}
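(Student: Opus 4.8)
The plan is to use the Adjunction Theorem to convert the desired lower bound on $C^2$ into an upper bound on $K_X\cdot C$. For a prime divisor $C$ we have $C^2=2p_C-2-C\cdot K_X$ with $p_C\ge 0$, so
\[
C^2\ge -2-K_X\cdot C.
\]
Hence it suffices to produce a constant bounding $K_X\cdot C$ from above as $C$ ranges over all prime divisors; then bounded negativity holds with $n_X=-2-\sup_C(K_X\cdot C)$ once that supremum is finite. This immediately settles every surface on which $-K_X$ is nef, since then $K_X\cdot C\le 0$ for all $C$, giving $C^2\ge -2$. In particular it covers del Pezzo surfaces and, because $K_X$ is numerically trivial there, K3, Enriques and abelian surfaces (where in fact $C^2\ge -2$, respectively $C^2\ge 0$, follows at once).

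The next step is to enlarge this to the case where $-K_X$ is merely $\Q$-effective, say $-K_X\sim_{\Q}\sum_i a_iD_i$ with $a_i\ge 0$ and each $D_i$ prime. For any prime $C$ distinct from all the $D_i$ we get $-K_X\cdot C=\sum_i a_i(D_i\cdot C)\ge 0$, hence again $C^2\ge -2$; the finitely many curves among the $D_i$ contribute only finitely many values of $C^2$, so the infimum stays finite. I would then try to push the method into the general-type range with $K_X$ nef and $K_X^2>0$ by combining the Hodge Index Theorem (Theorem \ref{HIT}), which gives the \emph{upper} bound $C^2\le (K_X\cdot C)^2/K_X^2$, with Bogomolov-type stability inequalities, aiming to show that prime curves of large $K_X$-degree must have arithmetic genus $p_C$ growing fast enough that $2p_C-2$ absorbs $K_X\cdot C$.

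The hard part, and the reason the statement is only a conjecture, is precisely the case where $-K_X$ is \emph{not} $\Q$-effective, for instance a blow-up of $\pr2$ at many (say $\ge 10$) general points, or a general-type surface with very positive $K_X$. There $K_X\cdot C$ is genuinely unbounded above, so the adjunction estimate can succeed only if one controls $p_C$ against $K_X\cdot C$ \emph{simultaneously}, and no uniform such control is known: whether prime curves of arbitrarily negative self-intersection occur on a blow-up of $\pr2$ at general points is entangled with the Nagata and SHGH-type conjectures. A final, decisive difficulty is that any correct argument \emph{must} invoke characteristic $0$ in an essential way, since in characteristic $p$ pulling a fixed curve back under iterated Frobenius yields prime divisors with $C^2\to -\infty$; so the approaches above would have to be supplemented by an input unavailable in positive characteristic -- Hodge theory, Kodaira vanishing, or the absence of inseparable self-maps -- and identifying the right such input is the crux that remains open.
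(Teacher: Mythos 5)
You should be aware at the outset that the statement you were asked to prove is labeled in the paper as a \emph{conjecture} (the Folklore Bounded Negativity Conjecture), and the paper contains no proof of it; it is presented explicitly as a long-standing open problem. So there is no ``paper's own proof'' to compare against, and your proposal --- as you yourself concede in its final paragraph --- is not a proof either. The genuine gap is the entire general case: your adjunction strategy $C^2\ge -2-K_X\cdot C$ reduces the problem to bounding $K_X\cdot C$ above uniformly over prime divisors $C$, and no such bound is known (or expected to come from this estimate alone) once $-K_X$ fails to be $\Q$-effective, e.g.\ for blow-ups of $\pr2$ at ten or more general points or for surfaces of general type. The proposed rescue via the Hodge Index Theorem plus Bogomolov-type inequalities is only a heuristic direction; the Hodge index bound $C^2\le (K_X\cdot C)^2/K_X^2$ goes the wrong way, and you give no mechanism forcing $p_C$ to grow against $K_X\cdot C$, so that step would not go through as written.

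What your proposal does establish correctly is exactly the partial result the paper itself records as a corollary: if $-mK_X\in\EFF(X)$ for some $m>0$ (your $\Q$-effectivity hypothesis), then all but finitely many prime divisors satisfy $-mK_X\cdot C\ge 0$, whence $C^2=2p_C-2-C\cdot K_X\ge -2$, and bounded negativity follows; this covers K3, Enriques, abelian, and relatively minimal rational surfaces, just as in the paper. Your closing remark on characteristic $p$ also tracks the paper's own remark, though your phrasing is imprecise: the known counterexample is not obtained by ``pulling a fixed curve back under Frobenius'' but by taking $X=C\times C$ for a curve $C$ of genus $g_C\ge 2$ over a finite field and considering the graphs $\Gamma_q$ of the $q$-power Frobenius, which are prime divisors with $\Gamma_q^2=q(2-2g_C)\to-\infty$. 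In short: your partial results are correct and coincide with the paper's, your assessment of where the difficulty lies is accurate, but the statement itself remains unproved by you, by the paper, and by everyone else.
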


\begin{Rmk} Conjecture \ref{folkloreconj} is false in positive characteristic.
I thank Burt Totaro for bringing to my attention the following example pointed out by J\'anos Koll\'ar
at a talk by Richard Harris at MSRI in January, 2009 (\url{http://www.msri.org/communications/vmath/VMathVideos/VideoInfo/4111/show_video}). Let $X=C\times C$, where $C$ is a curve of genus $g_C\ge2$ defined over a 
finite field of characteristic $p>0$. Let $\Gamma_q$ be the graph in $X$ of the Frobenius morphism
defined by taking $q$th powers, where $q$ is a sufficiently large power of $p$. Then $\Gamma_q$ is a 
curve on $X$ with $X^2=q(2-2g_C)$ \cite[Exercise V.1.10]{refHr}. Since $q$ can be arbitrarily large, 
$X$ does not have bounded negativity.
However, it is as far as I know still an open problem even in positive characteristic 
to determine which surfaces fail to have bounded negativity. 
\end{Rmk}

Some surfaces are known to have bounded negativity.

\begin{cor} A surface $X$ has bounded negativity if $-mK_X\in \EFF(X)$ for some positive integer $m$.
\end{cor}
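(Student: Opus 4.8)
The plan is to combine the Adjunction Theorem with the elementary fact that two distinct prime divisors on a surface meet non-negatively. First I would unpack the hypothesis: since $-mK_X\in\EFF(X)$, I can fix an honest effective divisor $D$ whose class is $-mK_X$ and write $D=\sum_j a_jD_j$ as a sum of its prime components $D_1,\ldots,D_k$ with $a_j>0$. The crucial feature is that there are only \emph{finitely many} such components $D_j$.

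Next I would apply Adjunction to an arbitrary prime divisor $C$: it supplies a non-negative integer $p_C$ with $C^2=2p_C-2-C\cdot K_X$. Discarding the non-negative term $2p_C$ gives $C^2\ge -2-C\cdot K_X=-2+\tfrac1m\,C\cdot(-mK_X)=-2+\tfrac1m\,C\cdot D$. Thus the whole problem reduces to bounding $C\cdot D$ from below over all prime divisors $C$.

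I would then split into two cases according to whether $C$ is one of the components $D_j$. If $C\ne D_j$ for all $j$, then since distinct prime divisors intersect non-negatively we have $C\cdot D_j\ge 0$ for every $j$, hence $C\cdot D\ge 0$ and therefore $C^2\ge -2$. The only prime divisors that can make $C\cdot D$ negative are consequently the finitely many components $D_j$ themselves, and for each of those $C^2=D_j^2$ is simply a fixed integer.

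Finally I would set $n_X=\min\{-2,\,D_1^2,\ldots,D_k^2\}$, the minimum taken over the finitely many components of $D$; this yields $C^2\ge n_X$ for every prime divisor $C$, establishing bounded negativity. I do not anticipate a genuine obstacle here. The one step deserving care is the passage from the class-level hypothesis $-mK_X\in\EFF(X)$ to a fixed effective representative $D$: it is precisely this choice that isolates the finite ``bad set'' of curves (the components of $D$) on which the self-intersection bound may fail, while guaranteeing $C^2\ge -2$ for all other prime divisors.
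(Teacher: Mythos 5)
Your proof is correct and follows essentially the same route as the paper: adjunction gives $C^2\ge -2-C\cdot K_X$, and the finitely many prime divisors meeting the effective class $-mK_X$ negatively (namely the components of a representative $D$) are the only possible exceptions to $C^2\ge -2$. The paper states this more tersely, leaving the identification of the exceptional curves with the components of $D$ implicit, while you make that identification and the final bound $n_X=\min\{-2,D_1^2,\ldots,D_k^2\}$ explicit; the content is the same.
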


\begin{proof} Since $-mK_X\in \EFF(X)$, there are only finitely many prime divisors $C$ such that 
$-mK_X\cdot C<0$. So, apart from finitely many prime divisors $C$,
we have $-mK_X\cdot C\ge 0$, in which case $C^2=2p_C-2-C\cdot K_X\ge -2$.
\end{proof}

\begin{Eg} In particular, bounded negativity holds for K3 surfaces, Enriques surfaces, abelian surfaces, 
and relatively minimal rational surfaces. But it is not always clear when it holds if one blows up points on those surfaces.
\end{Eg}

Let $\EFF(X)/\!\!\!\sim$ denote the image of $\EFF(X)$ in $\Num(X)$. In 
preparation for giving a criterion for bounded negativity to hold on $X$, we have
the following proposition (taken from \cite{refRo}):

\begin{prop}\label{fingeneffprop} If $\EFF(X)/\!\!\!\sim$ is finitely generated, then 
there are only finitely many prime divisors $C$ with $C^2<0$.
\end{prop}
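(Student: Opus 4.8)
The plan is to reduce the statement to a finiteness property of the prime components of a fixed finite collection of effective divisors. Since $\EFF(X)/\!\!\!\sim$ is finitely generated as a monoid, I would first choose monoid generators $c_1,\ldots,c_n\in\Num(X)$, and for each $i$ fix one effective divisor $D_i$ with $[D_i]=c_i$ (this is possible because $c_i$ lies in the image of $\EFF(X)$). Each $D_i$ has only finitely many prime components, so the set $S$ of all prime divisors occurring as a component of some $D_i$ is finite. The goal then becomes: every prime divisor $C$ with $C^2<0$ belongs to $S$.

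The key elementary fact I would isolate is that if $D$ is an effective divisor and $C$ is a prime divisor with $D\cdot C<0$, then $C$ must be a component of $D$. Indeed, writing $D=\sum_j b_j\Gamma_j$ as a sum of distinct prime divisors with $b_j\geq 0$, if no $\Gamma_j$ equals $C$ then each $\Gamma_j\cdot C\geq 0$ (distinct prime divisors on a smooth surface share no component and so meet non-negatively), forcing $D\cdot C\geq 0$, a contradiction. With this in hand, take a prime divisor $C$ with $C^2<0$ and expand its class in the generators: $[C]=\sum_i a_i c_i$ with $a_i\geq 0$. Since intersection numbers depend only on numerical class, $C^2=\sum_i a_i\,(c_i\cdot[C])=\sum_i a_i\,(D_i\cdot C)$. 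As $C^2<0$, at least one summand is negative, so there is an index $i$ with $a_i>0$ and $D_i\cdot C<0$; by the key fact $C$ is then a component of $D_i$, i.e. $C\in S$.

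This shows there are only finitely many prime divisors with negative self-intersection, completing the proof. The only real subtlety — and the step I would treat most carefully — is the passage between numerical classes and honest divisors: the expansion $[C]=\sum_i a_i c_i$ lives in the monoid $\EFF(X)/\!\!\!\sim$, so I must check that replacing each $c_i$ by a chosen effective representative $D_i$ is legitimate when computing $C^2$, which it is precisely because the intersection pairing factors through $\Num(X)$. A secondary remark, not needed for the count but reassuring, is that a prime divisor with $C^2<0$ is the unique prime divisor in its numerical class: any other prime divisor $C'$ in that class would satisfy $C\cdot C'=C^2<0$, which is impossible for distinct prime divisors. Hence the finitely many admissible elements of $S$ genuinely account for the finitely many such curves.
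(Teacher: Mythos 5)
Your proof is correct and takes essentially the same approach as the paper's: expand the numerical class of $C$ in the finitely many generators of $\EFF(X)/\!\!\!\sim$, use bilinearity of the intersection pairing to produce a generator that $C$ meets negatively, and invoke the fact that a prime divisor meeting an effective divisor negatively must be a component of it. The only cosmetic difference is that the paper first refines the generators to classes of prime divisors $C_1,\ldots,C_r$, so the conclusion reads $C=C_j$ outright, whereas you keep arbitrary effective representatives $D_i$ and conclude that $C$ lies among their finitely many prime components.
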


\begin{proof} Let $C_1,\ldots,C_r$ be prime divisors whose classes generate $\EFF(X)/\!\!\!\sim$.
Since each $C_i$ is the class of an effective divisor, there are only finitely many prime divisors $D$
such that $D\cdot C_i<0$ for some $i$. Now let $C$ be a prime divisor with $C^2<0$; we have
$C\sim\sum_im_iC_i$ for some $m_i\ge0$ and so $0>C^2=\sum_im_iC\cdot C_i$, hence
$C\cdot C_j<0$ for some $j$, and so $C=C_j$.
\end{proof}

This then gives a criterion for bounded negativity to hold.
 
\begin{cor} If $X$ is a surface such that $\EFF(X)/\!\!\!\sim$ is finitely generated, then
bounded negativity holds for $X$.
\end{cor}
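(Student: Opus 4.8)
The plan is to deduce this directly from Proposition \ref{fingeneffprop}, which has already done the substantive work. Since $\EFF(X)/\!\!\!\sim$ is assumed finitely generated, that proposition tells us the set $\mathcal{S}=\{C : C \text{ prime}, C^2<0\}$ of prime divisors of negative self-intersection is finite.

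First I would observe that every prime divisor $C$ not lying in $\mathcal{S}$ satisfies $C^2\ge 0$ by the very definition of $\mathcal{S}$. So the only prime divisors that could possibly violate a uniform lower bound are the finitely many members of $\mathcal{S}$.

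Then I would simply set $n_X = \min\big(0,\ \min_{C\in\mathcal{S}} C^2\big)$, where the inner minimum is taken over the finite set $\mathcal{S}$ (and is vacuous, giving $n_X=0$, if $\mathcal{S}=\emptyset$). This is a well-defined integer precisely because $\mathcal{S}$ is finite. By construction $C^2 \ge n_X$ for every $C \in \mathcal{S}$, while $C^2 \ge 0 \ge n_X$ for every prime divisor $C \notin \mathcal{S}$; hence $C^2 \ge n_X$ holds for all prime divisors $C$, which is exactly the assertion that $X$ has bounded negativity.

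There is essentially no obstacle here: the real content — the finiteness of the set of negatively self-intersecting prime divisors — is precisely Proposition \ref{fingeneffprop}, and once that is in hand the corollary reduces to taking a minimum over a finite set. I would note only the minor point that $C^2$ depends solely on the numerical equivalence class of $C$, so it is legitimate to have argued about the finitely generated monoid $\EFF(X)/\!\!\!\sim$ inside $\Num(X)$ while drawing a conclusion about self-intersections of prime divisors.
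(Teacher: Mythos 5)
Your proof is correct and is exactly the argument the paper intends: the corollary is stated without proof precisely because, once Proposition \ref{fingeneffprop} supplies the finiteness of the set of prime divisors with $C^2<0$, bounded negativity follows immediately by taking the minimum of $C^2$ over that finite set (together with $0$). Your closing remark that $C^2$ depends only on the numerical class is a sensible touch, though routine.
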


\begin{Rmk} It is difficult in general to determine whether $\EFF(X)/\!\!\!\sim$ is finitely generated,
even for rational surfaces.
Here are some cases where it is known. If $X$ is a rational surface with $K_X^2>0$,
then $\EFF(X)$ is finitely generated (see Exercise \ref{ex4} if $-K_X$ is nef
and $K_X^2>1$; see \cite{refRo} for the case that $K_X^2>0$ and $X$ is obtained 
by blowing up at most 8 points of $\pr2$; or see \cite{refLH} or
\cite{refTVV} for $K_X^2>0$ in general). We also have 
$\EFF(X)$ finitely generated
if $X$ is obtained by blowing up points on a line or conic in $\pr2$ (see 
Exercises \ref{ex1} and \ref{ex3}), or, more generally, 
if $X$ is rational and $-K_X$ is big \cite{refTVV}.
\end{Rmk}

When $X$ be obtained by blowing up $r<9$ generic points of $\pr2$, then $K_X^2>0$ and so
$\EFF(X)$ is finitely generated as mentioned above, but in fact $-K_X$ is ample,
which with adjunction implies $C^2\ge-1$ for any prime divisor $C$. For $r=9$,
$\EFF(X)$ is not finitely generated (see Exercise \ref{ex6}, for example), but
it is still true that $C^2\ge-1$ for any prime divisor $C$, although the proof is somewhat technical.
Here now is a conjecture for a case where $\EFF(X)$ is definitely not finitely generated
(see Exercise \ref{ex6})
but where Conjecture \ref{folkloreconj} is not yet known:

\begin{Conj}[\cite{refHa3}]\label{P2conj} Let $X$ be obtained by blowing up $r>9$ generic points
of $\pr2$. Then $C^2\ge-1$ for every prime divisor $C$, with equality if and only if
$C$ is a smooth rational curve with $K_X\cdot C=-1$.
\end{Conj}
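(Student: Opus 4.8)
The plan is to establish the inequality $C^2\ge-1$ and the characterization of the equality case simultaneously, using the action of the Weyl group by Cremona transformations to reduce an arbitrary prime divisor to a standard form. First I would dispose of the easy direction of the ``if and only if'': if $C$ is a smooth rational curve with $K_X\cdot C=-1$, then Adjunction gives $C^2=2p_C-2-K_X\cdot C=-2+1=-1$. So the content is (i) the inequality $C^2\ge-1$ for every prime $C$, and (ii) that $C^2=-1$ forces $C$ to be a smooth rational curve with $K_X\cdot C=-1$. I would also record the boundary cases: if $C=E_i$ then $C^2=-1$ and $C$ is a $(-1)$-curve, while if the degree $a:=C\cdot L$ vanishes then the effective, irreducible $C$ is supported on the exceptional locus and hence equals some $E_i$. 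From now on I assume $C=aL-\sum_i a_iE_i$ with $a\ge1$; since $C$ is prime and distinct from each $E_i$ we have $C\cdot E_i=a_i\ge0$.

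Next I would exploit genericity through the Weyl group $W_r$ acting on $\Cl(X)$. For generic $p_1,\dots,p_r$, a standard quadratic Cremona transformation centered at three of the points carries $X$ isomorphically onto a blow-up of $\pr2$ at another set of generic points, and under this isomorphism prime divisors go to prime divisors while the class transforms by an element of $W_r$; this action preserves the intersection form and $K_X$, hence preserves $C^2$, $K_X\cdot C$, and the arithmetic genus $p_C=1+\tfrac12(C^2+K_X\cdot C)$. Following Nagata \cite{refN2}, after reordering so that $a_1\ge\cdots\ge a_r\ge0$ the quadratic step replaces $a$ by $2a-a_1-a_2-a_3$, strictly decreasing the degree exactly when $a<a_1+a_2+a_3$. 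Iterating, I may assume $C$ is \emph{Nagata-reduced}, i.e. $a_1\ge\cdots\ge a_r\ge0$ and $a\ge a_1+a_2+a_3$, with $C$ still the class of a prime divisor on a generic blow-up and with $C^2$, $K_X\cdot C$ unchanged. For the equality case, a direct calculation settles (ii): a $(-1)$-class satisfies $\sum_i a_i=3a-1$, and when $a\ge a_1+a_2+a_3$ one checks that $\sum_i a_i^2\le a^2$, so $C^2\ge0$, a contradiction; hence the reduction of any prime $C$ with $C^2=-1$ terminates at some $E_i$, forcing $K_X\cdot C=-1$, whence $p_C=0$, and $C$ being integral of arithmetic genus $0$ means $C\cong\pr1$ is smooth rational.

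It remains to prove (i) for Nagata-reduced prime classes, namely $a^2-\sum_i a_i^2\ge-1$ whenever $a\ge a_1+a_2+a_3$ and $aL-\sum_i a_iE_i$ is the class of a prime divisor through generic points. This is the main obstacle, and it is where the difficulty genuinely concentrates, because the inequality is \emph{false} for reduced classes that are merely numerically admissible. For instance $-K_X=3L-E_1-\cdots-E_r$ is Nagata-reduced ($a=3=a_1+a_2+a_3$) yet has $(-K_X)^2=9-r\le-1$ for $r\ge10$; it is excluded only because no cubic passes through ten or more generic points, so $-K_X$ is not effective. Thus the whole weight of the proof falls on using primeness, and not merely the class, to bound $\sum_i a_i^2$ in terms of $a^2$. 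One can try to make this effective via the Semicontinuity Principle (Theorem \ref{SCprin2}): the class of $C$ is effective on every essentially-distinct specialization, and one would hope to select a specialization on which no effective divisor in this class can have $C^2\le-2$; but specialization preserves effectivity and not irreducibility, so this alone does not control primeness.

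Bounding $\sum_i a_i^2$ for the classes of prime (or even effective) divisors through generic points is precisely the content of the Nagata and $\mathrm{SHGH}$-type conjectures, which remain open for $r>9$. Hence I expect this final step to be the genuine obstruction, and it is the reason the statement is posed as a conjecture rather than a theorem: the reduction machinery cleanly disposes of the equality characterization and of every case that a Cremona transformation can simplify, but the core estimate for high-degree prime divisors through generic points lies at the frontier of what is known.
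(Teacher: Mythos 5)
This statement is Conjecture~\ref{P2conj}: the paper presents it as an open conjecture (attributed to \cite{refHa3}) and contains no proof of it, so there is no argument of the paper to compare yours against. Your overall verdict is the right one. The Cremona/Weyl reduction is indeed the standard device, and once one reduces to a Nagata-reduced prime class $C=aL-\sum_ia_iE_i$ (so $a_1\ge\cdots\ge a_r\ge0$ and $a\ge a_1+a_2+a_3$), the estimate $a^2-\sum_ia_i^2\ge-1$ for prime divisors through generic points is precisely the open content; your observation that $-K_X$ shows the estimate is false for merely numerically admissible (or even effective) reduced classes, so that primeness must enter in an essential way, is exactly the point, and it is why this is a conjecture rather than a theorem.

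There is, however, a genuine gap in the one part you claim to have settled, namely the equality case (ii). Your key step, ``when $a\ge a_1+a_2+a_3$ one checks that $\sum_ia_i^2\le a^2$,'' is false as stated: your own example $-K_X=3L-E_1-\cdots-E_r$ is Nagata-reduced with $\sum_ia_i^2=r>9=a^2$ for $r>9$. The inequality can be rescued by also imposing $\sum_ia_i=3a-1$, i.e.\ $K_X\cdot C=-1$; but in (ii) the hypotheses are only that $C$ is prime with $C^2=-1$, and $K_X\cdot C=-1$ is the \emph{conclusion}, so invoking it is circular. What your computation actually establishes is the classical fact that a prime divisor with $C^2=K_X\cdot C=-1$ is Weyl-equivalent to some $E_i$ (transitivity on $(-1)$-classes), which only concerns curves the conjecture already describes. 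It does not exclude a prime divisor with $C^2=-1$ and $p_C\ge1$: by adjunction $C^2+K_X\cdot C$ is even, so such a $C$ would have $K_X\cdot C$ odd and $\ge1$, and ruling this out is itself part of the conjecture, just as open as the inequality $C^2\ge-1$. So your proposal correctly isolates the obstruction for part (i), but part (ii) should likewise be reported as open rather than as disposed of by the reduction.
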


\subsection{An Asymptotic Approach to Bounded Negativity}\label{asymp}

While no general lower bound for $C^2$ for prime divisors $C$ on a surface $X$ is known,
given a nef divisor $F$, we can instead ask for a lower bound on $C^2/(F\cdot C)^2$ for all prime divisors $C$
with $F\cdot C>0$. As motivation for introducing multipoint
Seshadri constants, we now study this question in the case that $X$ is obtained by blowing up $r>0$ points
$p_1,\ldots,p_r\in\pr2$, taking $F$ to be $L$. Since $C$ is prime and we assume $L\cdot C>0$, we see that
$C=dL-\sum_im_iE_i$ for some $m_i\ge0$. 

Let 
\addtocounter{thm}{1}
\begin{align*}\tag{\thethm}\label{altdefLambda}
\lambda_L(X)&=\inf\Big\{\frac{C^2}{(C\cdot L)^2}: C\in\EFF(X), C\cdot L>0,C\cdot E_i\ge0{\rm\ for\ all\ }i, C\cdot \sum_iE_i>0\Big\}\\
&=\inf\Big\{\frac{C^2}{(C\cdot L)^2}: C\hbox{\rm\ is a prime divisor on $X$ and\ } C\cdot L>0\Big\}
\end{align*}
(The second equality is Exercise \ref{ex6b}.)
It is clear that the infimum exists: Let $C=dL-\sum_im_iE_i$. Since $L-E_i\in\NEF(X)$, we see 
$d\ge m_i$ for all $i$. Hence $C^2/(C\cdot L)^2=(d^2-\sum_im_i^2)/d^2 \ge 1-r$.

\begin{Prob}\label{asympprob} Compute $\lambda_L(X)$, or at least give good estimates for it.
\end{Prob}

We now recall a quantity $\varepsilon$ introduced by G. V. Chudnovsky \cite{refCh}
(for any $r$ points in any projective space)
and Demailly \cite{refD} (for a single point, i.e., $r=1$, but on any smooth variety)
now known as a {\it multipoint Seshadri constant}; see also \cite{refLz}.
(Chudnovsky's version, denoted $\widehat{\Omega}_0(p_1,\ldots,p_r)$,
is actually equal to $r \varepsilon$.)
Let $X$ be obtained by blowing up distinct points $p_1,\ldots,p_r\in\pr2$. Then
$$\varepsilon(\pr2;p_1,\ldots,p_r)=\inf\Big\{\frac{d}{\sum_im_i}: dL-\sum_im_iE_i\in\EFF(X), m_i\ge0,\sum_im_i>0\Big\}$$

As alternative definitions (see Exercise \ref{ex7}) we have:
\addtocounter{thm}{1}
\begin{align*}\tag{\thethm}\label{altdefvareps}
\varepsilon(\pr2;p_1,\ldots,p_r) & =\inf\Big\{\frac{C\cdot L}{\sum_iC\cdot E_i}: C{\rm\ is\ prime\ and\ }\sum_iC\cdot E_i>0\Big\}\\
 & \\
                         & =\sup\Big\{\frac{m}{d}: dL-m\sum_iE_i\in\NEF(X), m>0\Big\}.
\end{align*}

\begin{Rmk}\label{easySCcomp}
In general, $\varepsilon(\pr2;p_1,\ldots,p_r)$ is itself hard to compute. However, by Exercise \ref{ex7b},
if $F\cdot C=0$ for some
$F=dL-m\sum_iE_i\in\NEF(X)$ and $C=aL-\sum_im_iE_i\in\EFF(X)$ with $d>0$ and $a>0$,
then $\varepsilon(\pr2;p_1,\ldots,p_r)=m/d$.
\end{Rmk}

For our asymptotic application of Seshadri constants to bounded negativity, we will use the following 
elementary inequality:

\begin{lem}
Given integers $d>0$ and $d\ge m_i\ge0$ for all $i$, we have $\displaystyle\frac{\sum_im_i^2}{d^2}\le \frac{\sum_im_i}{d}$.
\end{lem}

\begin{proof}
Just note that $d\ge m_i\ge0$ implies $d\sum_im_i\ge \sum_im_i^2$; dividing by $d^2$ gives the result.
\end{proof}

Since $L-E_i\in\NEF(X)$ for each $i$, if $dL-\sum_im_iE_i\in\EFF(X)$ then $d\ge m_i$ for all $i$,
hence $\frac{d}{\sum_im_i}\ge 1/r$. In particular, $\varepsilon(\pr2;p_1,\ldots,p_r)\ge 1/r>0$ so 
$1/\varepsilon(\pr2;p_1,\ldots,p_r)$ makes sense.
Applying the lemma now gives:

\begin{cor}\label{asympfllb} Let $X$ be obtained by blowing up distinct points $p_1,\ldots,p_r\in\pr2$. Then
$$\lambda_L(X)\ge 1-\frac{1}{\varepsilon(\pr2;p_1,\ldots,p_r)}.$$
\end{cor}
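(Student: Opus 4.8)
The plan is to reduce the statement to the elementary inequality just established in the preceding lemma, combined with the defining infimum for $\varepsilon(\pr2;p_1,\ldots,p_r)$. First I would take an arbitrary prime divisor $C$ on $X$ with $C\cdot L>0$ and write it in coordinates as $C=dL-\sum_im_iE_i$, so that $d=C\cdot L>0$ and $m_i=C\cdot E_i\ge0$. Since $L-E_i\in\NEF(X)$ while $C\in\EFF(X)$, we automatically get $d\ge m_i\ge0$ for every $i$, which is precisely the hypothesis the preceding lemma requires.

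Next I would rewrite the quantity being minimized in the second description of $\lambda_L(X)$ in (\ref{altdefLambda}). Using $C^2=d^2-\sum_im_i^2$ and $(C\cdot L)^2=d^2$, we have
$$\frac{C^2}{(C\cdot L)^2}=1-\frac{\sum_im_i^2}{d^2}.$$
The preceding lemma gives $\frac{\sum_im_i^2}{d^2}\le\frac{\sum_im_i}{d}$, and hence
$$\frac{C^2}{(C\cdot L)^2}\ge 1-\frac{\sum_im_i}{d}.$$

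The final step is to bound $\sum_im_i/d$ from above using $\varepsilon$. If $\sum_im_i>0$, then $dL-\sum_im_iE_i\in\EFF(X)$ lies in the set defining $\varepsilon(\pr2;p_1,\ldots,p_r)$, so $d/\sum_im_i\ge\varepsilon(\pr2;p_1,\ldots,p_r)$; inverting (legitimate since both sides are positive, as $\varepsilon\ge1/r>0$ was noted just before the statement) yields $\sum_im_i/d\le 1/\varepsilon(\pr2;p_1,\ldots,p_r)$, whence $C^2/(C\cdot L)^2\ge 1-1/\varepsilon(\pr2;p_1,\ldots,p_r)$. The remaining case $\sum_im_i=0$ is even easier: there $C^2/(C\cdot L)^2=1$, which exceeds $1-1/\varepsilon(\pr2;p_1,\ldots,p_r)$ since $1/\varepsilon$ is positive. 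Taking the infimum over all such prime divisors $C$ then gives the claimed inequality for $\lambda_L(X)$.

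There is no serious obstacle here: the corollary is a formal consequence of the preceding lemma and the definition of $\varepsilon$, and the whole content is the chain of inequalities above. The only points demanding care are purely bookkeeping—checking that the direction of each inequality survives the reciprocal $x\mapsto 1/x$ and the sign flip in $1-(\cdot)$, and isolating the degenerate case $\sum_im_i=0$, where the defining set for $\varepsilon$ is empty so $\varepsilon$ cannot be invoked directly and one instead uses positivity of $1/\varepsilon$.
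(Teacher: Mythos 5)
Your proof is correct and follows essentially the same route as the paper: both arguments rewrite $C^2/(C\cdot L)^2$ as $1-\sum_im_i^2/d^2$, apply the preceding lemma (valid because $L-E_i\in\NEF(X)$ forces $d\ge m_i\ge0$), and then bound $\sum_im_i/d\le 1/\varepsilon(\pr2;p_1,\ldots,p_r)$ directly from the defining infimum of $\varepsilon$. The only cosmetic difference is that you run the estimate pointwise over prime divisors via the second characterization in (\ref{altdefLambda}) (so you need Exercise \ref{ex6b} and must dispose of the degenerate case $\sum_im_i=0$), whereas the paper works directly with the infimum over effective classes, where the condition $C\cdot\sum_iE_i>0$ excludes that case from the start.
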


\begin{proof}
\begin{align*}
\lambda_L(X)&=\inf\Big\{\frac{C^2}{(C\cdot L)^2}: C\in\EFF(X), C\cdot L>0,C\cdot E_i\ge0{\rm\ for\ all\ }i, C\cdot \sum_iE_i>0\Big\}\\
&=\inf\Big\{1-\frac{\sum_im_i^2}{d^2}: C=dL-\sum_im_iE_i\in\EFF(X), d>0, m_i\ge0, \sum_im_i>0\Big\}\\
&=\inf\Big\{1-\frac{\sum_im_i^2}{d^2}: dL-\sum_im_iE_i\in\EFF(X), m_i\ge0,\sum_im_i>0\Big\}\\
&\ge\inf\Big\{1-\frac{\sum_im_i}{d}: dL-\sum_im_iE_i\in\EFF(X), m_i\ge0,\sum_im_i>0\Big\}\\
&=1-\sup\Big\{\frac{\sum_im_i}{d}: dL-\sum_im_iE_i\in\EFF(X), m_i\ge0,\sum_im_i>0\Big\}\\
&=1-\frac{1}{\varepsilon(\pr2;p_1,\ldots,p_r)}
\end{align*}

\end{proof}

\begin{Rmk}
Sometimes equality holds, but usually not. See Exercises \ref{ex8} and \ref{ex9}.
\end{Rmk}

We close this lecture with some remarks about Mori dream spaces. Mori dream spaces give
interesting examples of surfaces with bounded negativity.
Let $X$ be obtained by blowing up points $p_1,\ldots,p_r\in\pr2$.
Let $L$ be the total transform of a line and let $E_i$ be the blow up of the point $p_i$.
Define the Cox ring $\Cox(X)$ of $X$ to be the ring whose additive structure
is given by 
$$\bigoplus_{(a_0,\ldots,a_r)\in {\bf Z}^{r+1}}H^0(X,\OO_X(a_0L-a_1E_1-\cdots-a_eE_r)),$$
and where multiplication is given by the natural maps
$H^0(X, \OO_X(F_1))\otimes H^0(X, \OO_X(F_2))\to H^0(X, \OO_X(F_1+F_2))$.
If $\Cox(X)$ is finitely generated we say that
$X$ is a {\it Mori dream space\/} \cite{refHK}.
If $X$ is a Mori dream space, then $\EFF(X)$ must be finitely generated,
and hence $X$ has bounded
negativity by Proposition \ref{fingeneffprop}.

\begin{Rmk}
If $X$ is obtained by blowing up at most 8 points of $\pr2$, then
$X$ is a Mori dream space. (Proof: By the Hodge Index Theorem
if $F$ is a nontrivial nef divisor, then $-K_X\cdot F>0$, hence
the result follows over the complex numbers
from \cite[Corollary 1]{refGM} by \cite[Theorem 2]{refRo} and \cite[Theorem III.1]{refHa2}.)
In fact, if $X$ is any rational surface with $K_X^2>0$, then 
$X$ is a Mori dream space. (The same proof applies, but without the assumption of the complex numbers, 
using \cite[Proposition 4.3(a)]{refLH} in place of
\cite{refRo}; alternatively, see \cite{refTVV}.)
If $K_X^2=0$ but $-K_X$ is not nef, we can again conclude that $X$ is a Mori dream space.
(By \cite[Proposition 4.3(c)]{refLH}, $\EFF(X)$ is finitely generated, and by Exercise \ref{ex5}(b)
and \cite[Theorem III.1]{refHa2}, nef divisors are semi-ample (i.e., have a positive multiple
which is effective and base point free). Now
apply \cite[Corollary 1]{refGM}.) In each of these cases, $-K_X$ is big (see Exercise \ref{ex9a}),
hence these (in addition to the examples of Exercises \ref{ex2} and \ref{ex3} of blow ups of points on a line or conic)
are all subsumed by the result of \cite{refTVV} that a rational surface with big $-K_X$ is 
a Mori dream space. However, not all rational surfaces which are Mori dream spaces have big $-K_X$.
For example, let $C$ be an irreducible cubic curve, and blow up the curve $r>9$ times,
each time at successive infinitely near points of the cubic, starting with a flex point of the cubic.
By Exercise \ref{ex9b}, $\EFF(X)$ is finitely generated and any nef class $F$ has
$F\cdot(-K_X)\ge0$. By \cite[Theorem 3.1 and Corollary 3.4]{refHa6}, every nef class is semi-ample,
and by \cite{refGM}, $X$ is a Mori dream space since $\EFF(X)$ is finitely generated and
any nef class is semi-ample, but $-K_X$ is not big since it is a prime divisor of negative self-intersection.
\end{Rmk}

Here is a question I do not know the answer to:

\begin{Ques} If $X$ is a rational surface with $\EFF(X)$ finitely generated,
is $X$ a Mori dream space?
\end{Ques}

\vskip1in

\subsection{Exercises}

\begin{Ex}\label{ex0} Let $X$ be a surface. 
\begin{itemize}
\item[(a)] Show $F\in\NEF(X)$ implies $F^2\ge0$.
\item[(b)] Assume $X$ is obtained by blowing up $r=21$ general points $p_i\in\pr2$.
Then $\Cl(X)$ has basis $L, E_1,\ldots,E_{21}$, where
$L$ is the pullback of the class of a line and $E_i$ is the class of the blow up of $p_i$. Let $F=5L-\sum_iE_i$;
show that $F\in\NEF(X)\setminus\EFF(X)$.
\end{itemize}
\end{Ex}

\begin{Soln}[Exercise \ref{ex0}] (a) Let $A$ be ample, $F$ nef and $F^2<0$. 
We will show that there are positive integers $s$,
$a$ and $f$ such that $aA+fF$ is ample and $saA+sfF$ is effective, but such that $F\cdot(aA+fF)<0$, 
which is impossible if $F$ is nef.
To show $aA+fF$ is ample it is enough by the Nakai-Moisezon criterion \cite{refHr} 
to show that $(aA+fF)\cdot C>0$ for every curve $C$, and that $(aA+fF)^2>0$.
But $A\cdot C>0$ since $A$ is ample and $F\cdot C\ge0$ since $F$ is nef, so $(aA+fF)\cdot C>0$.
Since $A$ is ample, $aA\in\EFF(X)$ for $a\gg0$, hence $A\cdot F\ge0$. Thus, taking $t=f/a$, we have
$(aA+fF)\cdot F=a(A+tF)\cdot F<0$ for $t>\frac{A\cdot F}{-F^2}$, but
$a(A+tF)\cdot F=0$ and
$(aA+fF)^2=a^2(A^2+2tA\cdot F+t^2F^2)=a^2(A^2+tA\cdot F)>0$ for  
$t=\frac{A\cdot F}{-F^2}$, so by choosing $a$ and $f$ such that $t$ is slightly larger than 
$\frac{A\cdot F}{-F^2}$ we will still have $(aA+fF)^2>0$ (and hence $aA+fF$ is ample
so $saA+sfF$ is effective for $s\gg0$) while also
having $(saA+sfF)\cdot F<0$, contradicting $F$ being nef. Hence we must have $F^2\ge0$.

(b) Consider points $p'_i$ which lie on a smooth quintic.
Let $Q$ be the proper transform of that quintic. Then $Q$ is nef
but $Q$ is linearly equivalent to $F'=5L'-E'_1-\cdots-E'_{21}$.
Now by the semicontinuity principle, Theorem \ref{SCprin},
$F=5L-E_1-\cdots-E_{21}\in\NEF(X)$ when the points $p_i$ are general.
But the points are general so impose 21 independent conditions on 
the 21 dimensional space of all quintics (since we can always
choose each successive point not to be a base point of the linear system of quintics 
through the previous points); i.e., $h^0(X,\OO_X(F))=0$ hence $F\not\in\EFF(X)$. 
\end{Soln}

\begin{Ex}\label{ex1} Find an explicit finite set of generators for $\EFF(X)$ and $\NEF(X)$ 
in case $X$ is obtained by blowing up $r\ge 1$ distinct points on a line in $\pr2$.
\end{Ex}

\begin{Soln}[Exercise \ref{ex1}] This solution is based on \cite[Proposition I.5.2]{refHa3}.
Let the points be $p_1,\ldots,p_r$. Then $\Cl(X)$ has basis $L, E_1,\ldots,E_r$, where
$L$ is the pullback of the class of a line and $E_i$ is the class of the blow up of $p_i$.
Let $\Lambda=L-E_1-\cdots-E_r$ and let $L_i=L-E_i$.
Clearly $\Lambda\in\EFF(X)$ and $E_i$ and $L_i$ are in $\EFF(X)$ for each $i$.
Since $L_i^2=0$ and $L_i$ is the class of a prime divisor, we see $L_i\in\NEF(X)$.
To prove that $\Lambda, E_1,\ldots,E_r$ generate $\EFF(X)$,
it is enough to prove that every effective, reduced, irreducible
divisor can be written as a non-negative integer combination of $\Lambda$
and $E_1,\dots,E_r$. So let $C=a\Lambda+\sum b_iE_i$
be the class of an effective, reduced and irreducible divisor. 
If $C$ is $\Lambda$ or $L_j$, then the claim
is true as $L_j=\Lambda+\sum_{i\neq j}E_i$, so we may assume
that $C$ is not one of these divisors. But then the intersection
with them must be non-negative and hence $b_j=C\cdot L_j\geq 0$.
Putting this into $0\leq C\cdot\Lambda=a-\sum b_i$
implies the non-negativity of $a$.
Moreover, if $F$ is nef then $F$ meets each $E_i$ and $\Lambda$ non-negatively,
and the argument we just used on $C$ shows that any such class can be written 
as $F=(a-\sum_ib_i)L+\sum_ib_iL_i$ for non-negative integers $a,b_1,\ldots,b_r$,
and hence $L,L_1,\ldots,L_r$ generate $\NEF(X)$. 
\end{Soln}

\begin{Ex}\label{ex2} Let $X$ be obtained by blowing up points $p_1,\ldots,p_r\in\pr2$.
If the points $p_i$ are collinear, show that $X$ is a Mori dream space.
\end{Ex}

\begin{Soln}[Exercise \ref{ex2}] The generators are given by taking a basis for
$H^0(X,\OO_X(G))$ for each $G$ among $E_1,\ldots,E_r$, $\Lambda$ and
$L_1,\ldots,L_r$. This is because if $D$ is an effective divisor, then by the solution
to Exercise \ref{ex1}, $D=N+M$, where $N$ is fixed and consists of a sum
of non-negative multiples of the $E_i$ and $\Lambda$, and $M\in\NEF(X)$
and hence $M=\sum_{i\ge0}m_iL_i$ for some non-negative $m_i$ (where 
we take $L_0=L$).
Thus it is enough to show that $\bigotimes_i H^0(X, \OO_X(L_i))^{\otimes m_i}\to H^0(X, \OO_X(M))$
is surjective. Do this inductively by showing that 
$H^0(X, \OO_X(F))\otimes H^0(X, \OO_X(L_i))\to H^0(X, \OO_X(F+L_i))$
is surjective for each $F\in\NEF(X)$ and hence in fact that
$H^0(X, \OO_X(F_1))\otimes H^0(X, \OO_X(F_2))\to H^0(X, \OO_X(F_1+F_2))$
is surjective whenever $F_1$ and $F_2$ are nef (see \cite[Theorem 2.8]{refHa1}). 
Alternatively, see  \cite{refOt}.
\end{Soln}

\begin{Ex}\label{ex3}
Let $X$ be obtained by blowing up points $p_1,\ldots,p_r$ on a smooth conic in $\pr2$
with $r\ge 3$. (If $r<3$, the points are collinear and the result is given by Exercise \ref{ex1}.
Also, the conic does not need to be smooth here but smoothness simplifies the argument a bit.)
\begin{itemize}
\item[(a)] Show $\EFF(X)$ is finitely generated. 
\item[(b)] Cite the literature to show that $X$ is a Mori dream space.
\end{itemize}
\end{Ex}

\begin{Soln}[Exercise \ref{ex3}] (a) Let $L_{ij}$, $i\ne j$, be the class of the proper transform of the line through
$p_i$ and $p_j$, let $L$ be the class of the total transform of a line, let $D$ be the 
class of the proper transform of the conic
and let $E_i$ be the class of the blow up of $p_i$ for each $i>0$. 

Let $C$ be the class of a prime divisor. Note that $-K_X=D+L$.
Hence if $C^2<0$ but $C\ne D$, then adjunction forces $L\cdot C\le 1$,
and hence $C$ is either $L_{ij}$ or $E_i$ for some $i$ and $j$.
If $C^2\ge 0$, write $C=a_0L-a_1E_1-\cdots-a_rE_r$. 
Since $C\cdot E_i\ge0$, we have $a_i\ge0$ for all $i$.
By reindexing we may assume that $a_1\ge a_2\ge \cdots\ge a_r\ge0$.
Let $D_i=2L-E_1-\cdots-E_i=D+E_{i+1}+\cdots+E_r$.
Thus $C=(a_0-a_1-a_3)L+(a_1-a_2)(L-E_1)+(a_2-a_3)L_{12}
+(a_3-a_2)D_3+\cdots+(a_{r-1}-a_r)D_{r-1}+a_rD_r$,
where $a_0-a_1-a_3\ge0$ since $C\cdot L_{12}\ge0$ implies
$a_0\ge a_1+a_2\ge a_1+a_3$. Note that $L=L_{12}+E_1+E_2$ and that $L-E_1=L_{12}+E_2$.
In particular, the class of every prime divisor is a sum of 
non-negative multiples of classes of the form $L_{ij}$, $E_i$ and $D$.

(b) Since $-K_X=D+L$, if $F$ is nef with
$-K_X\cdot F=0$, then $F\cdot L=0$, hence $F=0$ by the Hodge Index Theorem.
The fact that $X$ is a Mori dream space now follows by
\cite[Corollary 1]{refGM} and \cite[Theorem III.1]{refHa2},
or directly by \cite[Corollary 3]{refGM}, or by \cite{refTVV}.
\end{Soln}

The basic idea of part (a) of the next exercise is taken from \cite{refRo}.
\vskip\baselineskip

\begin{Ex}\label{ex4} Let $X$ be a rational surface such that $-K_X$ is nef.
\begin{itemize}
\item[(a)] If $C$ is a prime divisor on $X$ such that $p_C>0$, show that $C+K_X\in\EFF(X)$.
\item[(b)] For each integer $n$, show that there are only finitely many classes C of prime divisors
with $C^2\le n$ if $K_X^2>0$. 
\item[(c)] If $E$ is a class such that $E^2=E\cdot K_X=-1$, show that $E\in\EFF(X)$.
\item[(d)] If $K_X^2>1$, show that $-K_X-E\in\EFF(X)$ for any class $E$ such that $E^2=E\cdot K_X=-1$.
\item[(e)] Conclude that $\EFF(X)$ is finitely generated if $K_X^2>1$.
\end{itemize}
\end{Ex}

\begin{Soln}[Exercise \ref{ex4}] 
First note that $K_X^2\ge0$ since $-K_X$ is nef. 
Next note that $h^2(X,\OO_X(-K_X))=h^0(X,\OO_X(2K_X))$ is 0 since $X$ is rational.
Hence $h^0(X,\OO_X(-K_X))\ge K_X^2+1>0$ by Riemann-Roch, so $-K_X\in\EFF(X)$.

(a) Take cohomology of 
$0\to \OO_X(K_X)\to \OO_X(C+K_X)\to \OO_C(C+K_X)\to 0$.
Since $X$ is rational, $h^1(X, \OO_X(K_X))=h^1(X, \OO_X)=0$,
and $h^0(X,\OO_X(K_X))=0$ since $-K_X\in\EFF(X)$ is nontrivial.
Thus $0<p_C=h^0(C,\OO_C(C+K_X))=h^0(X,\OO_X(C+K_X))$.

(b) By adjunction and the fact that $-K_X$ is nef we have
$C^2=2p_C-2-K_X\cdot C\ge -2$, so for each $n$ it is enough to show that there are
only finitely many $C$ with $C^2=n$. So say $C^2=n$, hence 
$0\le -K_X\cdot C\le C^2+2=n+2$ by adjunction and the fact that $-K_X$ is nef. 
Now let $N= K_X^2C-(K_X\cdot C)K_X$, so $C=((-K_X\cdot C)(-K_X)+N)/(K_X^2)$.
Thus to show there are only finitely many such $C$, it is enough
to show that $-K_X\cdot C$ is bounded (but we already saw that
$0\le -K_X\cdot C\le n+2$) and that there are only finitely
many possibilities for $N$. To see the latter, note that $N\cdot K_X=0$, so
$(K_X^2)^2C^2=(K_X\cdot C)^2K_X^2+N^2$, hence 
$(K_X^2)(K_X^2n-(n+2)^2)\le (K_X^2)^2C^2-(K_X\cdot C)^2K_X^2=N^2$.
Thus $N^2$ is bounded below,
but $N\in K_X^{\perp}$ and $K_X^{\perp}$ is negative definite by the Hodge Index Theorem, 
so intuitively there are only finitely many lattice elements $N$ in $K_X^{\perp}$
of length at most $\sqrt{-(K_X^2)(K_X^2n-(n+2)^2)}$. More rigorously,
since $\Cl(X)$ is free abelian of finite rank, there are only finitely many elements of
$\Cl(X)$ orthogonal to $K_X$ with self-intersection no less than $(K_X^2)(K_X^2n-(n+2)^2)$,
and hence there are only finitely many possibilities for $N$.

(c) Since $-K_X$ is nef but $-K_X\cdot (K_X-E)<0$, we see $K_X-E\not\in \EFF(X)$.
Thus $h^2(X,\OO_X(E))=0$, hence $h^0(X,\OO_X(E))\ge 1$ by Riemann-Roch,
so $E\in\EFF(X)$. 

(d) Since $-K_X\cdot (2K_X+E)<0$, we see that $0=h^0(X,\OO_X(2K_X+E))=h^2(X,\OO_X(-K_X-E))$.
Now $h^0(X,\OO_X(-K_X-E))\ge K_X^2-1$ by Riemann-Roch, so $-K_X-E\in \EFF(X)$.

(e) Let $E$ be any class such that $E^2=K_X\cdot E=-1$. Then $E\in\EFF(X)$.
Let $C$ be the class of a prime divisor that is a component of $E$.
Then $p_C=0$, since otherwise $C+K_X$ and hence
$E+K_X$ is in $\EFF(X)$ by (a). But $-K_X\cdot(E+K_X)<0$, so this is impossible.
Thus $E$ is a sum of prime divisors $C$ with $p_C=0$. Likewise, $-K_X-E$ is a sum of 
classes of prime divisors with $p_C=0$,
since otherwise $-K_X-E+K_X=-E$ is in $\EFF(X)$. So for some $C_i$ with $p_{C_i}=0$
we have $-K_X=\sum_iC_i$ and this sum involves at least two summands.

By part (b), there are only finitely many classes $D$ of prime divisors with $D^2\le0$ and $p_D=0$.
We will now see that these classes $D$, together with the $C_i$, generate $\EFF(X)$.
Given any prime divisor $C$, it is enough to show that either $C-D\in\EFF(X)$ for some
such $D$ or that $C-C_i\in\EFF(X)$ for some $i$. This is clear by (a) if $p_C>0$, so assume
$p_C=0$. It is again clear if $C^2\le0$, so assume $C^2>0$. We may assume
$C\cdot C_1\le C\cdot C_i$ for all $i$. Note that $h^2(X,\OO_X(C-C_1))=h^0(X,\OO_X(K_X-C+C_1))
=h^0(X,\OO_X(-\sum_{i>1}C_i-C))=0$. Thus $h^0(X,\OO_X(C-C_1))\ge 
(C^2-2C\cdot C_1-K_X\cdot C + (C_1^2+K_X\cdot C_1))/2+1=
(C^2-2C\cdot C_1-K_X\cdot C -2)/2+1= (C^2+\sum_{i>1}C\cdot C_i-C\cdot C_1)/2 \ge C^2/2>0$,
so $C-C_1\in\EFF(X)$. 
\end{Soln}

\begin{Ex}\label{ex5}
Let $X$ be a rational surface with $K_X^2=0$.
\begin{itemize}
\item[(a)] Show that $-K_X\in\EFF(X)$.
\item[(b)] Assume in addition that $-K_X$ is not nef. If $F$ is nef
with $-K_X\cdot F=0$, show that $F=0$.
\end{itemize}
\end{Ex}

\begin{Soln}[Exercise \ref{ex5}] (a) Apply Riemann-Roch, using $h^2(X,\OO_X(-K_X))=h^0(X,\OO_X(2K_X))=0$.

(b) This follows by the Hodge Index Theorem.
Suppose $F\ne0$. Since $F$ is nef, we have $F^2\ge0$ (by Exercise
\ref{ex0}). If $F^2>0$, then $F^{\perp}$ is negative definite, hence
$-K_X\cdot F=0$ and $K_X^2=0$ imply that $-K_X=0$, but
this contradicts the fact that $-K_X\ne0$. Thus $F^2=0$. 
Since $F^2=0$ and $K_X\cdot F=0$, we see for any elements $v$ and $w$ 
in the span of $-K_X$ and $F$ in $\Cl(X)$ that $v\cdot w=0$. 
But for any ample divisor $A$ we have $i=A\cdot(-mK_X+F)>0$ for $m\gg0$,
since $j=-K_X\cdot A>0$.
Let $v=j(-mK_X+F)$ and let $w=-iK_X$. Then $v-w\in A^{\perp}$, but $(v-w)^2=0$,
so $v=w$, hence $j(-mK_X+F)=-iK_X$ so $jF=(mj-i)K_X$. Thus $mj-i<0$ 
(since $-3(mj-i)=(mj-i)K_X\cdot L=jF\cdot L\ge 0$ but $F\cdot L=0$ implies $F=0$) so $-K_X$ is nef
(being a positive rational  multiple of a nef class),
contrary to hypothesis.
\end{Soln}

\begin{Ex}\label{ex6}
Let $X$ be obtained by blowing up 9 points $p_1,\ldots,p_9\in\pr2$ on a smooth plane cubic $D'$.
Let $D$ be the proper transform of $D'$. Let $L$ be the pullback of the class of a line
and let $E_i$ be the class of the blow up of $p_i$.
\begin{itemize}
\item[(a)] Show that $N\in K^{\perp}$ implies $N^2$ is even.
\item[(b)] Let $N$ be any class in $K^{\perp}\cap E_9^{\perp}$.
Show that $E=N+E_9+(N^2/2)K_X$ satisfies $E^2=K_X\cdot E=-1$.
Conclude that $E\in\EFF(X)$.
\item[(c)] If the points are sufficiently general and the ground field is the complex numbers,
show that each such $E$ is the class of a prime divisor.
Conclude that $\EFF(X)$ is not finitely generated.
\end{itemize}
\end{Ex}

\begin{Soln}[Exercise \ref{ex6}] 
(a) By Riemann-Roch, $N^2/2=(N^2-K_X\cdot N)/2$ is an integer.

(b) That $E=N+E_9+(N^2/2)K_X$ satisfies $E^2=K_X\cdot E=-1$ is easy.
Since $-K_X=D$ is nef, and since $-K_X\cdot(K_X-E)<0$,
we see that $h^2(X,\OO_X(E))=0$. Now apply Riemann-Roch to see that 
$h^0(X,\OO_X(E))\ge 1$, so $E\in\EFF(X)$.

(c) Suppose that some $E=N+E_9+(N^2/2)K_X$ is not the class of a prime divisor. 
Since $-K_X\cdot E=1$ and $-K_X$ is nef, if $E$ has two 
or more components, then one of them must be disjoint from $D$, hence in the kernel of
the mapping $\Cl(X)\to \Cl(D)$. But the kernel here is the same as the kernel of
$K_X^{\perp}\to \Cl^0(D)$, where $\Cl^0(D)$ is the subgroup of divisor classes of degree 0,
which is a torus which can be identified with $D$. Since the complex numbers have infinite dimension
over the rationals, it's easy to choose points $p_i\in D$, such that
the map $K_X^{\perp}\to \Cl^0(D)$ is injective, hence no prime divisor on $X$ is disjoint from $D$,
so $E$ must be prime.
Conclude by applying Proposition \ref{fingeneffprop}.
\end{Soln}

\begin{Ex}\label{ex6b} Let $X$ be obtained by blowing up $r>0$ points
$p_1,\ldots,p_r\in\pr2$, and let $L$ and $E_i$ be as usual.
Show that
\begin{align*}
&\inf\Big\{\frac{C^2}{(C\cdot L)^2}: C\hbox{\rm\ is a prime divisor on $X$ and\ } C\cdot L>0\Big\}\\
=&\inf\Big\{\frac{C^2}{(C\cdot L)^2}: C\in\EFF(X), C\cdot L>0,C\cdot E_i\ge0{\rm\ for\ all\ }i, C\cdot \sum_iE_i>0\Big\}.
\end{align*}
\end{Ex}

\begin{Soln}[Exercise \ref{ex6b}] Let $\inf_1$ be the first infimum in the statement above
and let $\inf_2$ be the second. Note that $\inf_1$ is equal to
$\inf_3=\inf\Big\{\frac{C^2}{(C\cdot L)^2}: C\hbox{\rm\ is a prime divisor on $X$,\ } C\cdot L>0{\rm\ and\ }C\cdot \sum_iE_i>0\Big\}$, since any prime $C$ with $C\cdot L>0$ but $C\cdot \sum_iE_i=0$ is a
positive multiple of $L$, in which case $C'=C-E_1$ is the class of a prime divisor with 
$\frac{C'^2}{(C'\cdot L)^2}< \frac{C^2}{(C\cdot L)^2}$. 

Since $\inf_2$ is an infimum over a bigger set than is $\inf_3$, we see that
$\inf_2\le \inf_3=\inf_1$. Thus, to see $\inf_2=\inf_1$, it is enough to see for any
$D$ that is effective with $D\cdot L>0$, $D\cdot E_i\ge0$ for all $i$ and $D\cdot \sum_iE_i>0$
that there is a prime $C$ with $C\cdot L>0$ and $\frac{C^2}{(C\cdot L)^2}\le \frac{D^2}{(D\cdot L)^2}$.

Suppose $D$ satisfies the given conditions. Write $D$ as $dL-\sum_im_iE_i$. We can also
write $D$ as $\sum_jC_j$ for some prime divisors $C_j$.
Let $F$ be obtained by deleting every summand $C_j$ (if any) for which
$C_j=E_i$ for some $i$. Writing $F=dL-\sum_im'_iE_i$ we see $m'_i\ge m_i$ for all $i$,
hence $\frac{F^2}{(F\cdot L)^2}\le \frac{D^2}{(D\cdot L)^2}$.
Thus we may assume that $C_j\ne E_i$ for all $i$ and $j$ and hence that 
$C_j\cdot E_i\ge0$ for all $i$ and $j$ and that $C_j\cdot L\ge0$ for all $j$.
If for some $j$ we have $C_j\cdot \sum_iE_i=0$, then $C_j$ is a positive multiple of
$L$, so we can replace $C_j$ by $C_j-E_1$; the latter is still the class of a prime divisor,
but this change reduces $\frac{F^2}{(F\cdot L)^2}$.
Thus with these changes we may assume each summand $C_j$ 
of $F$ satisfies the conditions imposed on $D$.

By induction it is clearly enough to check that if $D_1$ and $D_2$ satisfy the conditions on $D$, then 
$$\min_i\Big\{\frac{D_i^2}{(D_i\cdot L)^2}\Big\}\le \frac{(D_1+D_2)^2}{((D_1+D_2)\cdot L)^2}.$$
If we write $D_1=aL-\sum_ia_iE_i$ and $D_2=bL-\sum_ib_iE_i$, and assume that
the minimum occurs for $i=1$,
this is just $(a^2-\sum_ia_i^2)/a^2\le ((a+b)^2-\sum_i(a_i+b_i)^2)/(a+b)^2$,
or $1-\sum_i(a_i/a)^2\le 1-\sum_i((a_i+b_i)/(a+b))^2$.
I.e., it is enough to show that $\sum_i(a_i/a)^2\ge\sum_i((a_i+b_i)/(a+b))^2$ if
$\sum_i(a_i/a)^2\ge\sum_i(b_i/b)^2$. I.e., given vectors $v$ and $w$ in Euclidean 
space with non-negative entries and given positive reals $a$ and $b$,
we must show $(v/a)^2\ge ((v+w)/(a+b))^2$ if $(v/a)^2\ge (w/b)^2$.

But $b^2v^2\ge a^2w^2$ by hypothesis, so $b|v|\ge a|w|$, hence $bv^2= b|v|^2\ge a|w||v|\ge av\cdot w$,
so $2abv^2\ge2a^2v\cdot w$ and thus 
$(a+b)^2v^2=a^2v^2+b^2v^2+2abv^2\ge a^2v^2+a^2w^2+2a^2v\cdot w=a^2(v+w)^2$,
whence $(v/a)^2\ge ((v+w)/(a+b))^2$.
\end{Soln}

\begin{Ex}\label{ex7}
Let $X$ be obtained by blowing up points $p_1,\ldots,p_r\in\pr2$.
\begin{itemize}
\item[(a)] Show that $\varepsilon(\pr2;p_1,\ldots,p_r)=\inf\Big\{\frac{C\cdot L}{\sum_iC\cdot E_i}: C{\rm\ is\ prime\ and\ }C\cdot L>0\Big\}$.
\item[(b)] Show that $\varepsilon'(\pr2;p_1,\ldots,p_r)= \varepsilon(\pr2;p_1,\ldots,p_r)$, where $\varepsilon'(\pr2;p_1,\ldots,p_r)=\sup\Big\{\frac{m}{d}: dL-m\sum_iE_i\in\NEF(X), d>0\Big\}$.
\end{itemize}
\end{Ex}

\begin{Soln}[Exercise \ref{ex7}] (a) This just amounts to the easy fact that
$$\frac{d_1+d_2}{m_1+m_2}\ge\min\Big\{\frac{d_1}{m_1},\frac{d_2}{m_2}\Big\}.$$
By definition
$$\varepsilon(\pr2;p_1,\ldots,p_r)=\inf\Big\{\frac{d}{\sum_im_i}: dL-\sum_im_iE_i\in\EFF(X), m_i\ge0,\sum_im_i>0\Big\}.$$
Suppose $D=dL-\sum_im_iE_i\in\EFF(X)$ satisfies the conditions of the definition of
$\varepsilon(\pr2;p_1,\ldots,p_r)$. Write $D=\sum_iC_i$ as a sum of classes of prime divisors $C_i$.
Deleting all $C_i$ of the form $E_j$ reduces $\frac{d}{\sum_im_i}$, so we may assume
$C_i\ne E_j$ for all $i$ and $j$. If for some $i$ we have $C_i\cdot E_j=0$ for all $j$,
then deleting that $C_i$ from the sum also reduces $\frac{d}{\sum_im_i}$.
Hence we may assume that $D=dL-\sum_im_iE_i=\sum_iC_i$, where each $C_i$ is prime and 
satisfies the conditions in the definition of $\varepsilon(\pr2;p_1,\ldots,p_r)$. Write
$C_i=d_iL-\sum_jm_{ij}E_j$. Let $\mu_i=\sum_jm_{ij}$. Then it suffices to show that
$(\sum_id_i)/(\sum_i\mu_i)\ge \min_i\{d_i/\mu_i\}$, which follows by repeated application of the easy fact above.

(b) If $aL-b\sum_iE_i\in\NEF(X)$, then $ad\ge b\sum_im_i$ whenever $dL-\sum_im_iE_i\in\EFF(X)$.
Thus 
$$\varepsilon(\pr2;p_1,\ldots,p_r)\ge \varepsilon'(\pr2;p_1,\ldots,p_r).$$ 
Conversely, for any positive integers $a$ and $b$ such that
$b/a<\varepsilon(\pr2;p_1,\ldots,p_r)$, we have $(aL-b\sum_iE_i)\cdot (dL-\sum_im_iE_i)\ge0$ for
all $dL-\sum_im_iE_i\in\EFF(X)$, and hence $aL-b\sum_iE_i\in\NEF(X)$ so
$b/a\le \varepsilon'(\pr2;p_1,\ldots,p_r)$. Since we can choose positive integers $a$ and 
$b$ such that $b/a$ is less than (but arbitrarily
close to) $\varepsilon(\pr2;p_1,\ldots,p_r)$, the result follows.
\end{Soln}

\begin{Ex}\label{ex7b} Let $X$ be the blow up of $\pr2$ at $r$ distinct points
$p_1,\ldots,p_r$. Suppose $F\cdot C=0$ for some
$F=dL-m\sum_iE_i\in\NEF(X)$ and $C=aL-\sum_im_iE_i\in\EFF(X)$ with $d>0$ and $a>0$.
Show that $\varepsilon(\pr2;p_1,\ldots,p_r)=m/d$.
\end{Ex}

\begin{Soln}[Exercise \ref{ex7b}] 
Since $F\in\NEF(X)$, clearly $\varepsilon(\pr2;p_1,\ldots,p_r)\ge m/d$.
But $\varepsilon(\pr2;p_1,\ldots,p_r) > m/d$ would imply that 
$F'=d'L-m'\sum_iE_i\in\NEF(X)$ for some $m'/d'>m/d$, but in that case
$F'\cdot C<0$, contradicting $F'\in\NEF(X)$.
\end{Soln}

\begin{Ex}\label{ex8}
Let $X$ be obtained by blowing up collinear points $p_1,\ldots,p_r\in\pr2$.
Show that 
$$\varepsilon(\pr2;p_1,\ldots,p_r)=1/r$$ 
and that equality holds in Corollary \ref{asympfllb}.
\end{Ex}

\begin{Soln}[Exercise \ref{ex8}] Since $C=L-E_1-\cdots-E_r$ is the class of a prime divisor
and $F=rL-E_1-\cdots-E_r=(r-1)L+C$ is a sum of prime divisors each of which $F$ meets non-negatively,
we see that $F$ is nef. Clearly $\lambda_L(X)\le C^2/(C\cdot L)^2=1-r$.
But  $F\cdot C=0$, so
$\varepsilon(\pr2;p_1,\ldots,p_r)=1/r$ by Exercise \ref{ex7b},
and we have $1-r=1-1/\varepsilon(\pr2;p_1,\ldots,p_r)$.
\end{Soln}

\begin{Rmk*}[on Exercise \ref{ex8}] Exercise \ref{ex8}  shows that equality holds in Corollary \ref{asympfllb}
when the points are collinear, but the converse is not true. Here is an example
where equality holds but the points are not collinear.
Suppose we consider 9 points on a smooth cubic, three of which are collinear.
Let $X$ be obtained by blowing up the nine points. The proper transform of the cubic
is clearly effective and (being prime of non-negative self-intersection) it also
is nef so $\varepsilon(\pr2;p_1,\ldots,p_r)=1/3$ by Exercise \ref{ex7b}, and the proper transform of the line
through the three collinear points has self-intersection $-2$, so $-2\ge \lambda_L(X)$, hence
$-2\ge \lambda_L(X)\ge1-1/\varepsilon(\pr2;p_1,\ldots,p_r)=-2$.

There is another way to look at what Exercise \ref{ex8} tells us, however.
The solution to Exercise \ref{ex8} shows that
$\lambda_L(X)=1-r$ and $\varepsilon(\pr2;p_1,\ldots,p_r)=1/r$
if the points are collinear. Conversely, if either
$\lambda_L(X)=1-r$ or $\varepsilon(\pr2;p_1,\ldots,p_r)=1/r$, then the points
$p_1,\ldots,p_r\in\pr2$ are collinear. For suppose $\varepsilon(\pr2;p_1,\ldots,p_r)=1/r$. 
In any case, $F=(r-1)L-E_1-\cdots-E_r=(L-E_1-E_2)+(L-E_3)+\cdots+(L-E_r)$
is effective and the classes of the prime components of $L-E_1-E_2$ consist of classes $E_j$
and $L-E_1-E_2-E_{j_1}-\cdots-E_{j_s}$, where $p_1,p_2,p_{j_1},\ldots,p_{j_s}$ are all of the points
which lie on the line through $p_1$ and $p_2$. If the points
were not all collinear, then there would be at most $r-1$ such points,
so $F$ would meet each of its prime components non-negatively. Thus $F$ would be nef and 
we would have the contradiction that $1/r=\varepsilon(\pr2;p_1,\ldots,p_r)\ge 1/(r-1)$. 
Finally, suppose $\lambda_L(X)=1-r$. Since $F=rL-E_1-\cdots-E_r=(L-E_1)+(L-E_2)+(L-E_3)+\cdots+(L-E_r)$
is always nef, we see that $\varepsilon(\pr2;p_1,\ldots,p_r)\ge1/r$ always holds.
But this means we have $1-r=\lambda_L(X)\ge 1-1/\varepsilon(\pr2;p_1,\ldots,p_r)\ge 1 - 1/(1/r) = 1-r$,
hence $\varepsilon(\pr2;p_1,\ldots,p_r)=1/r$ which we saw above implies the points are collinear.
\end{Rmk*}

\begin{Ex}\label{ex9}
Find a set of points $p_1,\ldots,p_r\in\pr2$ such that
the inequality in Corollary \ref{asympfllb} is strict.
\end{Ex}

\begin{Soln}[Exercise \ref{ex9}] Consider ten points $p_1,\ldots,p_{10}$ on a smooth conic.
From the solution to Exercise \ref{ex3}, the only prime divisors $C$ of 
negative self-intersection come from the points, from the lines through pairs of points
and from the conic itself. 
The infimum defining $\lambda_L(X)$ must come from prime divisors $C$ of negative self-intersection.
By just checking the possibilities we see $\lambda_L(X)=-6/4$ comes from $C=2L-E_1-\cdots-E_{10}\in\EFF(X)$
and we also see $F=5L-E_1-\cdots-E_{10}=C+3L$ is nef.
Since $F\cdot C=0$, by Exercise \ref{ex7b} we see $\varepsilon(\pr2;p_1,\ldots,p_r)=1/5$
and hence $\lambda_L(X)=-6/4\ge 1-1/\varepsilon(\pr2;p_1,\ldots,p_r)=-4$.
\end{Soln}

\begin{Ex}\label{ex9a}
Let $X$ be a rational surface such that $K_X^2=0$ but $-K_X\not\in\NEF(X)$.
Show that $-K_X$ is big
(i.e., some positive multiple $-mK_X$ is effective and can be written 
as $-mK_X=M+N$ where $M$ and $N$ are effective and $M^2>0$).
\end{Ex}

\begin{Soln}[Exercise \ref{ex9a}] 
By Exercise \ref{ex5}, $-K_X\in\EFF(X)$. Since $-K_X$ is not nef, there is a prime divisor $C$
such that $-K_X\cdot C<0$. Thus $-K_X-C$ is effective hence so is $-mK_X-C$ for $m\ge1$, and 
$(-mK_X-C)^2=(2m-1)K_X\cdot C +(K_X\cdot C+C^2)
=(2m-1)K_X\cdot C+2p_C-2$, so $(-mK_X-C)^2>0$ for $m\ge2$.
Thus $-mK_X=(-mK_X-C)+C$ is big for $m\ge2$. 
\end{Soln}

\begin{Ex}\label{ex9b}
Let $X=X_{r+1}$ be the rational surface such that
$X_1=\pr2$, and for each $i\ge 1$, $X_{i+1}\to X_i$ is the blow up
of $p_i$, where $p_1\in X_1$ is a flex of an irreducible plane cubic $C$, 
and then for each $i\ge1$, $p_{i+1}$ is the point of the proper transform of $C$
on $X_{i+1}$ infinitely near to $p_i$. (Thus $p_1,\ldots, p_r$ are 
essentially distinct points.)  Assume $r\ge 3$. Show that the class of any prime divisor $D$ with $D^2<0$
is either $E_r$, or $E_i-E_{i+1}$ for $1\le i<r$, or $L-E_1-E_2-E_3$
or $D=-K_X$ (if $r>9$), and show that a divisor class $F$ is nef if and only if $-K_X\cdot F\ge0$
and $F$ is a non-negative integer linear
combination of $L$, $L-E_1$, $2L-E_1-E_2$, $3L-E_1-E_2-E_3$, $\ldots$,
$3L-E_1-\cdots-E_r=-K_X$. Conclude that $\EFF(X)$ is generated by
$E_r$, $E_i-E_{i+1}$ for $1\le i<r$, $L-E_1-E_2-E_3$
and $D=-K_X$.
\end{Ex}

\begin{Soln}[Exercise \ref{ex9b}] It is easy to see that each of the classes listed is the class of
a prime divisor $D$ with $D^2<0$; for example, $L-E_1-E_2-E_3$ is the class of the proper transform 
of the line tangent to $C$ at $p_1$ (i.e., the flex line), while the 
the class of the proper transform of 
$C$ to $X$ is $3L-E_1-\cdots-E_r=-K_X$, which has negative self-intersection exactly when $r>9$.
Suppose $D=aL-a_1E_1-\cdots-a_rE_r$ is the class of a prime divisor with 
$D^2<0$ which is not in the given list.
Then $D$ meets each of the listed classes non-negatively; i.e.,
$D\cdot E_r\ge 0$ (so $a_r\ge0$), $D\cdot (E_i-E_{i+1})\ge0$ (so
$a_i\ge a_{i+1}$ for each $i=1,\ldots,r-1$, hence $a_1\ge a_2\ge \cdots\ge a_r\ge 0$) 
and $D\cdot (L-E_1-E_2-E_3)\ge0$
(so $a\ge a_1+a_2+a_3$). It is not hard to see the non-negative integer linear
combinations of $L$, $L-E_1$, $2L-E_1-E_2$, $3L-E_1-E_2-E_3$, $\ldots$,
$3L-E_1-\cdots-E_r=-K_X$ are precisely the classes
which meet $E_r$, $E_i-E_{i+1}$ for $i>0$ and $L-E_1-E_2-E_3$ non-negatively
(see \cite{refHa6}). But each of $L$, $L-E_1$, $2L-E_1-E_2$, $3L-E_1-E_2-E_3$, $\ldots$,
$3L-E_1-\cdots-E_r=-K_X$ is
a sum of the listed classes of negative self-intersection (for example,
$L=(L-E_1-E_2-E_3)+
((E_1-E_2)+\cdots+(E_r-E_{r-1})+E_r)+((E_2-E_3)+\cdots+(E_r-E_{r-1})+E_r)
+((E_3-E_4)+\cdots+(E_r-E_{r-1})+E_r)$ and
$3L-E_1-\cdots-E_{r-1}=-K_X+E_r$; moreover, if $r=9$, then
$-K_r=3(L-E_1-E_2-E_3)+2(E_1-E_2)+4(E_2-E_3)+6(E_3-E_4)+5(E_4-E_5)
+4(E_5-E_6)+\cdots+(E_8-E_9)$, if $r=8$, then
$-K_r=3(L-E_1-E_2-E_3)+2(E_1-E_2)+4(E_2-E_3)+6(E_3-E_4)+5(E_4-E_5)
+4(E_5-E_6)+\cdots+2(E_7-E_8)+E_8$, etc.). Thus 
$D\cdot(-K_X)\ge0$ implies $D$ is a sum of classes of negative self-intersection, 
each of which it meets non-negatively,
so $D^2\ge0$. Thus our list of classes of prime divisors of negative self-intersection is complete.
Because $F$ meets $E_r$, $E_i-E_{i+1}$ for $i>0$ and $L-E_1-E_2-E_3$ non-negatively
if $F$ is nef, this also shows that 
any nef class $F$ is a non-negative integer linear
combination of $L$, $L-E_1$, $2L-E_1-E_2$, $3L-E_1-E_2-E_3$, $\ldots$,
$3L-E_1-\cdots-E_r=-K_X$. 

Since each of 
$L$, $L-E_1$, $2L-E_1-E_2$, $3L-E_1-E_2-E_3$, $\ldots$,
$3L-E_1-\cdots-E_r=-K_X$ is a non-negative integer linear
combination of the listed classes of negative self-intersection, we see
the latter generate $\EFF(X)$, and we also see
that a class $F$ is nef if and only if it is a non-negative integer linear
combination $F$ of $L$, $L-E_1$, $2L-E_1-E_2$, $3L-E_1-E_2-E_3$, $\ldots$,
$3L-E_1-\cdots-E_r=-K_X$ with $-K_X\cdot F\ge0$.
\end{Soln}

%\vskip1in
\newpage

\section{Lecture: Abnormality}

\subsection{Abnormal Curves}\label{abn}

One of the difficulties in studying Conjecture \ref{folkloreconj} and Problem \ref{asympprob}
is the possibility of there being infinitely many prime divisors $C$ with $C^2<0$, possibly 
(for all anyone knows) even with $C^2$ arbitrarily
negative. As an intermediate step, it might be worthwhile to define and study a class of effective
divisors $C$ with $C^2<0$ which are so bad as to form a finite set. Doing so turns out to have useful 
applications to computing $\varepsilon(\pr2;p_1,\ldots,p_r)$. 

\begin{Def} Consider a surface $X$ obtained by blowing up a finite set of points $p_1,\ldots,p_r\in\pr2$.
Let $C=dL-\sum_im_iE_i\in\EFF(X)$ and assume $m_i\ge0$ for all $i$ with $m_i>0$ for some $i$.
Working formally (i.e., in $\Cl(X)\otimes_{\bf Z}{\bf Q}$), 
let ${\overline C}=dL-{\overline m}\sum_iE_i$, where ${\overline m}=(\sum_im_i)/r$.
Following Nagata \cite{refN}, we say $C$ is {\it abnormal} if ${\overline C}^2<0$.
This is equivalent to $d/\sum_im_i<1/\sqrt{r}$, and also to $d/(r{\overline m})<1/\sqrt{r}$. 
\end{Def}

We note that not every curve $C$ with $C^2<0$ is abnormal (see Exercise \ref{ex10}); in fact,
$X$ has at most finitely many prime divisors which are abnormal curves (see Exercise \ref{ex11}),
but $X$ can have infinitely many prime $C$ with $C^2<0$ (see Exercise \ref{ex6}).

One application of the concept of abnormality is to computing $\varepsilon(\pr2;p_1,\ldots,p_r)$:

\begin{thm}\label{abnormalscomputethm}
Let $X$ be a surface obtained by blowing up a finite set of points $p_1,\ldots,p_r\in\pr2$.
Then $\varepsilon(\pr2;p_1,\ldots,p_r)<1/\sqrt{r}$ if and only if $X$ has an abnormal prime divisor.
\end{thm}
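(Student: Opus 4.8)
The plan is to relate the Seshadri constant $\varepsilon(\pr2;p_1,\ldots,p_r)$ directly to the abnormality condition $\overline{C}^2<0$, which by definition is equivalent to $d/\sum_i m_i < 1/\sqrt{r}$. The key observation is that $\varepsilon$ is itself an infimum of ratios $d/\sum_i m_i$ over effective (equivalently, by \eqref{altdefvareps}, prime) divisors $C=dL-\sum_i m_i E_i$, and the abnormality threshold $1/\sqrt r$ is being compared against exactly such a ratio. So both directions of the biconditional should reduce to comparing the defining infimum of $\varepsilon$ with the number $1/\sqrt r$.

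For the direction ``abnormal prime divisor implies $\varepsilon < 1/\sqrt r$'': if $C=dL-\sum_i m_i E_i$ is an abnormal prime divisor, then $C$ is effective with $m_i\ge 0$ and $\sum_i m_i>0$, so it is an admissible competitor in the infimum defining $\varepsilon$, giving $\varepsilon \le d/\sum_i m_i$. Abnormality says precisely $d/\sum_i m_i < 1/\sqrt r$, so $\varepsilon < 1/\sqrt r$ immediately.

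For the converse, suppose $\varepsilon < 1/\sqrt r$. Using the prime-divisor form of the Seshadri constant from \eqref{altdefvareps}, namely $\varepsilon = \inf\{ (C\cdot L)/\sum_i (C\cdot E_i) : C\text{ prime}, \sum_i C\cdot E_i>0\}$, the strict inequality $\varepsilon < 1/\sqrt r$ means some prime divisor $C=dL-\sum_i m_i E_i$ (with $\sum_i m_i>0$) achieves a ratio $d/\sum_i m_i$ strictly below $1/\sqrt r$ — or at least that the infimum is below $1/\sqrt r$. The subtle point is passing from ``the infimum is $<1/\sqrt r$'' to ``some single prime divisor has ratio $<1/\sqrt r$.'' Since the infimum is strictly less than $1/\sqrt r$, by definition of infimum there is at least one admissible prime $C$ with $d/\sum_i m_i < 1/\sqrt r$; this $C$ is then abnormal by the definition's equivalent characterization $d/\sum_i m_i < 1/\sqrt r \iff \overline C^2<0$.

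\textbf{The main obstacle} I anticipate is the converse direction, specifically the interplay between a \emph{strict} inequality on an infimum and the existence of a \emph{witness}. Because $\varepsilon < 1/\sqrt r$ is strict, an ordinary infimum argument suffices: if every admissible prime divisor had ratio $\ge 1/\sqrt r$, the infimum would be $\ge 1/\sqrt r$, contradicting $\varepsilon < 1/\sqrt r$. Hence a witness prime divisor with ratio $< 1/\sqrt r$ exists, and it is abnormal. I would be careful to use the prime-divisor characterization of $\varepsilon$ from \eqref{altdefvareps} rather than the original effective-divisor definition, since the conclusion requires an abnormal \emph{prime} divisor; the equivalence of those two descriptions (Exercise \ref{ex7}) does the work of guaranteeing that the witness can be taken prime. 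I would also confirm that the hypothesis $\sum_i m_i>0$ (so that $\overline m>0$ and the abnormality condition is meaningful) is automatically met for any competitor in the relevant infimum.
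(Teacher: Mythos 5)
Your proof is correct, but the converse direction takes a genuinely different route from the paper's. The paper argues directly from the original effective-divisor definition of $\varepsilon$: given $\varepsilon(\pr2;p_1,\ldots,p_r)<1/\sqrt{r}$, it picks an effective class $C=dL-\sum_im_iE_i$ with $d/\sum_im_i<1/\sqrt{r}$ (so ${\overline C}^2<0$), discards any summands of the form $E_j$, writes $C=\sum_jC_j$ as a sum of prime divisors, and then uses an averaging trick: ${\overline C}^2=\sum_{i,j}{\overline C}_i\cdot{\overline C}_j<0$ forces ${\overline C}_i\cdot{\overline C}_j<0$ for some pair, whereas two non-abnormal classes always satisfy ${\overline C}_i\cdot{\overline C}_j = d_id_j-r\overline{m}_i\overline{m}_j\ge 0$ (since non-abnormality gives $d_i\ge\sqrt{r}\,\overline{m}_i$ and $d_j\ge\sqrt{r}\,\overline{m}_j$); hence some component $C_i$ is abnormal. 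You instead invoke the prime-divisor characterization (\ref{altdefvareps}) of $\varepsilon$, after which the converse reduces to the observation that an infimum strictly below $1/\sqrt{r}$ must have a witness below $1/\sqrt{r}$; your witness is prime, and it satisfies the hypotheses of the abnormality definition ($m_i=C\cdot E_i\ge0$ since the admissibility condition $\sum_iC\cdot E_i>0$ rules out $C=E_j$, and $\sum_im_i>0$ automatically), so it is abnormal. There is no circularity in this: the solution to Exercise \ref{ex7}(a) rests only on the mediant inequality $\frac{d_1+d_2}{\mu_1+\mu_2}\ge\min\bigl\{\frac{d_1}{\mu_1},\frac{d_2}{\mu_2}\bigr\}$ applied to a decomposition into primes, not on this theorem. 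In effect both arguments must pass from effective classes to prime classes, but by different tools: your route delegates that passage to Exercise \ref{ex7}, which makes the theorem itself nearly immediate; the paper's route keeps the proof self-contained and isolates the quadratic-form fact about averaged classes (two non-abnormal classes have non-negative averaged intersection) that is reused elsewhere, for instance in bounding the number of abnormal prime divisors in Exercise \ref{ex11}.
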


\begin{proof} If $X$ has an abnormal prime divisor, then $\varepsilon(\pr2;p_1,\ldots,p_r)<1/\sqrt{r}$ 
follows by definition of $\varepsilon$. Conversely, assume 
$\varepsilon(\pr2;p_1,\ldots,p_r)<1/\sqrt{r}$. Then there is a class $C=dL-\sum_im_iE_i\in\EFF(X)$
with $d/\sum_im_i<1/\sqrt{r}$ and hence ${\overline C}^2<0$. 
Write $C=\sum_iC_i$ as a sum of prime divisors $C_i$. 
We may assume no summand is of the form $E_j$, since after removing all such summands
we still have an abnormal curve. Thus every summand $C_j$ is of the form $d_jL-\sum_im_{ij}E_i$
with $m_{ij}\ge 0$. 
Now ${\overline C}=\sum_i{\overline C}_i$, so ${\overline C}^2=\sum_{ij}{\overline C}_i\cdot {\overline C}_j<0$
hence ${\overline C}_i\cdot {\overline C}_j<0$
for some $i$ and $j$. But if neither $C_i$ nor $C_j$ were abnormal, then it is easy to see that
${\overline C}_i\cdot {\overline C}_j\ge0$.
\end{proof}

\begin{cor}\label{abnormalscomputecor}
Let $X$ be a surface obtained by blowing up a finite set of points $p_1,\ldots,p_r\in\pr2$.
If there are no prime divisors on $X$ which are abnormal, then $\varepsilon(\pr2;p_1,\ldots,p_r)=1/\sqrt{r}$.
If there are abnormal prime divisors on $X$, then $\varepsilon(\pr2;p_1,\ldots,p_r)=d/\sum_im_i<1/\sqrt{r}$ for some 
abnormal prime divisor $C=dL-\sum_im_iE_i$.
\end{cor}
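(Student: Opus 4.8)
The plan is to split along the dichotomy of Theorem~\ref{abnormalscomputethm} and feed in two auxiliary facts: the prime-divisor description of $\varepsilon$ from~(\ref{altdefvareps}) (Exercise~\ref{ex7}(a)) and the finiteness of abnormal prime divisors (Exercise~\ref{ex11}). The one ingredient not already packaged in the excerpt is a \emph{universal} upper bound $\varepsilon(\pr2;p_1,\ldots,p_r)\le 1/\sqrt r$, valid for every configuration of points, which I would establish first by a postulation (dimension) count. For positive integers $d,m$, imposing an $m$-fold point at a given $p_i$ costs at most $\binom{m+1}{2}$ linear conditions on the space of degree-$d$ forms, so
\[
h^0\big(X,\OO_X(dL-m\textstyle\sum_iE_i)\big)\ \ge\ \binom{d+2}{2}-r\binom{m+1}{2}.
\]
Hence $dL-m\sum_iE_i\in\EFF(X)$ whenever $\binom{d+2}{2}>r\binom{m+1}{2}$, and then $\varepsilon(\pr2;p_1,\ldots,p_r)\le d/(rm)$. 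Fixing $\delta>0$ and taking $m$ large with $d=\lceil(\sqrt r+\delta)m\rceil$, the inequality on binomials holds for all $m\gg0$ because $(\sqrt r+\delta)^2>r$, while $d/(rm)\to 1/\sqrt r+\delta/r$; letting $\delta\to0$ gives $\varepsilon(\pr2;p_1,\ldots,p_r)\le 1/\sqrt r$ with no hypothesis on the points.

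For the first assertion, suppose there are no abnormal prime divisors. Then the contrapositive of Theorem~\ref{abnormalscomputethm} gives $\varepsilon(\pr2;p_1,\ldots,p_r)\ge 1/\sqrt r$, and combined with the upper bound just proved this forces $\varepsilon(\pr2;p_1,\ldots,p_r)=1/\sqrt r$.

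For the second assertion, suppose abnormal prime divisors exist. By~(\ref{altdefvareps}),
\[
\varepsilon(\pr2;p_1,\ldots,p_r)=\inf\Big\{\frac{d}{\sum_im_i}:\ C=dL-\textstyle\sum_im_iE_i\ \text{is prime with}\ \sum_im_i>0\Big\}.
\]
I would split this infimum according to whether $C$ is abnormal. By the definition of abnormality, a prime $C$ is abnormal precisely when $d/\sum_im_i<1/\sqrt r$ and non-abnormal precisely when $d/\sum_im_i\ge 1/\sqrt r$, so the infimum over the non-abnormal primes is $\ge 1/\sqrt r$. By Exercise~\ref{ex11} there are only finitely many abnormal prime divisors, and by hypothesis at least one; hence the infimum over the abnormal primes is attained, at some abnormal $C=dL-\sum_im_iE_i$ with value $d/\sum_im_i<1/\sqrt r$. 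Since this value lies strictly below the non-abnormal infimum, the overall infimum equals it, yielding $\varepsilon(\pr2;p_1,\ldots,p_r)=d/\sum_im_i<1/\sqrt r$, as claimed. (The edge case in which no non-abnormal primes occur is harmless: the non-abnormal infimum is then $+\infty$ and the conclusion is unchanged.)

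The main obstacle is the universal upper bound $\varepsilon(\pr2;p_1,\ldots,p_r)\le 1/\sqrt r$: it is the only step that is not bookkeeping on top of the cited results, and it rests on the postulation estimate and the asymptotic choice of $d$ and $m$. Everything else reduces to the dichotomy of Theorem~\ref{abnormalscomputethm} together with the finiteness of Exercise~\ref{ex11}, which is exactly what upgrades an infimum into an attained minimum in the abnormal case.
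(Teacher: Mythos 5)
Your proposal is correct, and its skeleton is the paper's: the contrapositive of Theorem \ref{abnormalscomputethm} for the first claim, and for the second claim the prime-divisor description of $\varepsilon$ from (\ref{altdefvareps}) (Exercise \ref{ex7}(a)) together with the finiteness of abnormal prime divisors (Exercise \ref{ex11}), which turns the infimum into a minimum attained at an abnormal prime whose ratio is below $1/\sqrt r$ by the very definition of abnormality. The genuine difference is how the universal bound $\varepsilon(\pr2;p_1,\ldots,p_r)\le 1/\sqrt r$ is obtained. The paper's proof of the corollary never mentions it --- it simply says the first statement ``follows from Theorem \ref{abnormalscomputethm}'' --- and you are right that this bound is the missing ingredient; in the paper it is supplied only in the next subsection, as Corollary \ref{epsgammacor} ($r\varepsilon\le\gamma\le\sqrt r$), where the argument is intersection-theoretic rather than enumerative: if $d/m>\sqrt r$ then $(dL-m\sum_iE_i)^2>0$, so by Riemann--Roch a large multiple $s(dL-m\sum_iE_i)$ is effective, forcing $\gamma\le\sqrt r$. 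Your postulation count ($\binom{d+2}{2}>r\binom{m+1}{2}$ forces effectivity) is an equally valid and more elementary substitute; it buys self-containedness, needing neither Riemann--Roch on the blow-up nor the auxiliary invariant $\gamma$ of Definition \ref{gammadef}, while the paper's route buys slightly more, namely the stronger chain $r\varepsilon\le\gamma\le\sqrt r$ that it reuses later (e.g.\ in Exercise \ref{ex12} and in the discussion of Nagata's conjecture). One could also note that your treatment of the second statement does not actually invoke Theorem \ref{abnormalscomputethm} at all: the split of the infimum into abnormal and non-abnormal primes re-derives the needed inequality $\varepsilon<1/\sqrt r$ on the spot, which is a harmless (and arguably cleaner) repackaging of the same ideas.
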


\begin{proof} 
The first statement follows from Theorem \ref{abnormalscomputethm}. The second
follows from the fact that there are only finitely many
abnormal prime divisors and hence the infimum in the definition of
$\varepsilon(\pr2;p_1,\ldots,p_r)$ as given in (\ref{altdefvareps}) is actually a minimum
(see Exercises \ref{ex7}(a) and \ref{ex11}). The fact that $d/\sum_im_i<1/\sqrt{r}$
is just the definition of abnormality.
\end{proof}

The values of $\varepsilon(\pr2;p_1,\ldots,p_r)$ are known when $X$ is obtained by blowing up $r$
generic points of $\pr2$ if either $r\le 9$ or $r$ is a square. 
It is an open problem to compute $\varepsilon(\pr2;p_1,\ldots,p_r)$  when $r>9$ is not a square.
There is a long-standing conjecture, however, which implies 
(and in fact is equivalent to) $\varepsilon(\pr2;p_1,\ldots,p_r)=1/\sqrt{r}$ for $r>9$:

\begin{Conj}[Nagata \cite{refN2}]\label{NagataConj} If $X$ is obtained by blowing up $r>9$
generic points of $\pr2$, then $X$ has no abnormal curves.
\end{Conj}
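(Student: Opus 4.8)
The statement is Nagata's celebrated conjecture, which is open for every non-square $r>9$; so in place of a proof I can only propose the lines of attack that the preceding machinery makes natural and indicate where each one stalls. The first move is to reduce via Theorem \ref{abnormalscomputethm}: the absence of abnormal prime divisors is equivalent to $\varepsilon(\pr2;p_1,\ldots,p_r)=1/\sqrt r$, and by the alternative description (\ref{altdefvareps}) of $\varepsilon$ as a supremum over nef classes this amounts to showing that every class $dL-m\sum_iE_i$ with $d/m\ge\sqrt r$ lies in $\NEF(X)$. The one direction is free: any nef $dL-m\sum_iE_i$ has $F^2=d^2-rm^2\ge0$, forcing $m/d\le1/\sqrt r$. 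So the whole content is to produce nef classes $dL-m\sum_iE_i$ with $m/d$ approaching $1/\sqrt r$. Since it is the \emph{generic} configuration we care about, the designated tool is the Semicontinuity Principle for generic points, Theorem \ref{SCprin2}(b): there, nefness propagates back from any single special configuration with \emph{no} positivity hypothesis, so it suffices to exhibit one degeneration of the $r$ points on which the boundary class is visibly nef.

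This is exactly Nagata's own argument in the square case $r=s^2$, and it is the template I would try to generalize. Specialize the points to lie generically on a smooth curve of degree $s$; then $sL-\sum_iE_i$ becomes the proper transform of that curve, a prime divisor of self-intersection $0$ and hence nef, and Theorem \ref{SCprin2}(b) returns nefness of $sL-\sum_iE_i$ for generic points, giving $\varepsilon=1/s=1/\sqrt r$. For general $r$ my plan would be to search for an analogous special configuration --- a union of low-degree curves, or a toric/Newton-polygon limit of the $r$ points --- on which the relevant class $dL-m\sum_iE_i$ decomposes as a sum of prime divisors that it meets non-negatively, so that nefness can be verified by hand and then transported to the generic points by semicontinuity.

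The hard part --- and the reason the conjecture is open --- is that no such configuration is known when $r$ is not a perfect square. With $\sqrt r$ irrational the candidate extremal class $dL-m\sum_iE_i$ never has self-intersection exactly $0$ for integer $d,m$, so there is no single curve playing the role of the degree-$s$ curve above; instead one must approximate $1/\sqrt r$ by rationals $m/d$ from below through an \emph{infinite} family of classes and verify nefness of the whole family under a single degeneration. Controlling all of these at once is precisely what defeats the present methods: degenerations to special or reducible configurations (in the spirit of Ciliberto--Miranda and later authors) tend either to destroy the genericity that Theorem \ref{SCprin2}(b) needs or to introduce unexpected base curves that obstruct the required nefness. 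I would therefore expect any serious attempt along these lines to founder exactly here, which is why the conjecture --- equivalent to the sharp value $\varepsilon(\pr2;p_1,\ldots,p_r)=1/\sqrt r$ --- has resisted all known techniques.
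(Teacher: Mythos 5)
You are right that this statement is Nagata's conjecture, which the paper itself records as open and proves nowhere; the only case the paper credits as settled is $r$ a perfect square, via precisely the argument you describe --- specialize the $r=s^2$ points onto a smooth degree-$s$ curve, note the proper transform $sL-\sum_iE_i$ is a prime divisor of self-intersection zero and hence nef, and transport nefness back to generic points by Theorem \ref{SCprin2}(b) (cf.\ Exercise \ref{ex12}(a)). Your reduction of the conjecture to $\varepsilon(\pr2;p_1,\ldots,p_r)=1/\sqrt r$ via Theorem \ref{abnormalscomputethm}, and your diagnosis of why no analogous specialization is known for non-square $r$, match the paper's own discussion, so there is nothing to correct.
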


Nagata proved this when $r$ is a square \cite{refN2}. 
The conjecture is still open, although
it is known in various special cases.
For example, the conjecture is equivalent to: 

\begin{Conj}\label{NagataConjVersion2} If $dL-m(E_1+\cdots+E_r)\in\EFF(X)$
when $X$ is obtained by blowing up $r>9$ generic points of $\pr2$, then $d>m\sqrt{r}$.
\end{Conj}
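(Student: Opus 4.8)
The plan is to prove the two conjectures equivalent by reducing everything to the uniform‑multiplicity classes $dL-m\sum_i E_i$, using the symmetry of a generic configuration as the bridge between arbitrary non‑negative multiplicities (which is what abnormality involves) and equal multiplicities (which is what Conjecture \ref{NagataConjVersion2} involves). Throughout I would record the basic translation: for $C=dL-\sum_i m_iE_i$ with $m_i\ge 0$ and $\sum_i m_i>0$, abnormality $\overline C^2<0$ is exactly $d\sqrt r<\sum_i m_i$, and for a \emph{uniform} class $C=dL-m\sum_iE_i$ one has $\overline C=C$, so $\overline C^2=d^2-rm^2$.

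First I would prove the implication Conjecture \ref{NagataConjVersion2} $\Rightarrow$ Conjecture \ref{NagataConj} by contraposition. Suppose $X$ has an abnormal curve; after deleting any $E_j$ summands (which only makes the curve more abnormal, as in the proof of Theorem \ref{abnormalscomputethm}) we may take $C=dL-\sum_i m_iE_i\in\EFF(X)$ with $m_i\ge 0$, $\sum_i m_i>0$, and $d\sqrt r<\sum_i m_i$; here $d=C\cdot L\ge 0$ since $L$ is nef, and in fact $d\ge 1$ because $-\sum_i m_iE_i$ has no sections. Since the $p_i$ are generic, the monoid $\EFF(X)\subset\Cl(X)=\Z^{r+1}$ is invariant under the $S_r$‑action permuting the coordinates $E_1,\dots,E_r$ (the value $h^0(X,\OO_X(dL-\sum_i m_iE_i))$ depends only on the multiset of multiplicities for generic points), so $C_\sigma=dL-\sum_i m_{\sigma^{-1}(i)}E_i\in\EFF(X)$ for every $\sigma\in S_r$, and hence the sum $\sum_{\sigma\in S_r}C_\sigma$ is effective. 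Its class is the uniform class $r!\,dL-(r-1)!\,(\sum_i m_i)\sum_iE_i$, whose coefficient of $\sum_iE_i$ is positive, so Conjecture \ref{NagataConjVersion2} applies and forces $r!\,d>(r-1)!\,(\sum_i m_i)\sqrt r$, i.e. $d\sqrt r>\sum_i m_i$. This contradicts abnormality, so no abnormal curve exists.

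For the converse, Conjecture \ref{NagataConj} $\Rightarrow$ Conjecture \ref{NagataConjVersion2}, suppose $dL-m\sum_iE_i\in\EFF(X)$ with $m>0$. This class is already uniform, so the absence of abnormal curves gives $d^2-rm^2=\overline C^2\ge 0$, that is $d\ge m\sqrt r$. When $r$ is not a perfect square, $\sqrt r$ is irrational, so the integer ratio $d/m$ cannot equal $\sqrt r$ and the bound sharpens automatically to $d>m\sqrt r$, which is the desired conclusion.

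The main obstacle is the strict inequality in the converse when $r=s^2$ is a perfect square (with $s\ge 4$, since $r>9$). Here the hypothesis does \emph{not} by itself rule out an effective uniform class with $d=ms$, since such a class has $\overline C^2=0$, not $\overline C^2<0$, and one checks that every prime component of it also lies on the null ray $\overline{(\,\cdot\,)}^2=0$, producing no abnormal curve to contradict. What must be shown separately is that $m(sL-\sum_iE_i)$ is not effective for generic $p_i$, i.e. $h^0\big(X,\OO_X(m(sL-\sum_iE_i))\big)=0$ for all $m>0$; this is precisely Nagata's theorem for $r=s^2$ recalled in Section \ref{intro}, so I would cite \cite{refN2} to close the square case. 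The cleanest way to package all of this is to observe that each conjecture is equivalent to $\varepsilon(\pr2;p_1,\ldots,p_r)=1/\sqrt r$: Conjecture \ref{NagataConj} by Theorem \ref{abnormalscomputethm} together with the always‑valid bound $\varepsilon\le 1/\sqrt r$ (a nef class has non‑negative self‑intersection), and Conjecture \ref{NagataConjVersion2} by the same symmetrization, which reduces the infimum defining $\varepsilon$ to uniform classes without changing the ratio $d/\sum_i m_i$. In this formulation the two statements stand or fall together in every range where the value $1/\sqrt r$ is known, the only delicate point being the boundary strictness that the perfect‑square case of Nagata's theorem supplies.
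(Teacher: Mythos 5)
Your proof of the equivalence is correct, and it is essentially the argument the paper leaves implicit (the equivalence is asserted there without proof): your $S_r$-symmetrization of effective classes at generic points is exactly the device used in the solution of Exercise \ref{ex12}(b), and the translation between abnormality and $d\sqrt{r}<\sum_i m_i$ is the paper's own definition. In particular you correctly isolate the only delicate point --- for $r=s^2$ a perfect square, Conjecture \ref{NagataConj} by itself yields only the non-strict bound $d\ge m\sqrt{r}$ for a uniform effective class, and the strict inequality demanded by Conjecture \ref{NagataConjVersion2} genuinely requires Nagata's unconditional theorem \cite{refN2} for $s^2>9$ generic points, which you invoke rather than elide.
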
 

By \cite[Corollary 4.1]{refHR3}, this is true when $m\le t(t-3)/2$, where $t=\lfloor\sqrt{r}\rfloor$.
In addition, Dumnicki shows Conjecture \ref{NagataConjVersion2} is true when $m\le 42$.

\subsection{A Dual Problem}\label{dual}

Let $X$ be obtained by blowing up $r$
points of $\pr2$.
Recall that $\varepsilon(\pr2;p_1,\ldots,p_r)$ is the supremum of $1/t$ over all $t$ such that
$tL-\sum_iE_i\in\NEF(X)$. There is a dual notion which Chudnovsky \cite{refCh} attributes
to Waldschmidt \cite{refW}. 

\begin{Def}\label{gammadef}
$$\gamma(\pr2;p_1,\ldots,p_r)=\inf\Big\{ \frac{d}{m}:dL-m\sum_iE_i\in\EFF(X) \Big\}$$
\end{Def}

Clearly (as Chudnovsky \cite{refCh} remarks), we have
$r\varepsilon(\pr2;p_1,\ldots,p_r)\le \gamma(\pr2;p_1,\ldots,p_r)$.
Also, since $d/m>\sqrt{r}$ implies $(dL-m\sum_iE_i)^2>0$, we see 
in that case for $D=dL-m\sum_iE_i$
that $sD\in\EFF(X)$ for $s\gg0$, hence $\gamma(\pr2;p_1,\ldots,p_r)\le d/m$ for all $d/m>\sqrt{r}$;
i.e., $\gamma(\pr2;p_1,\ldots,p_r)\le \sqrt{r}$. Thus we have:

\begin{cor}\label{epsgammacor}
$$r\varepsilon(\pr2;p_1,\ldots,p_r)\le \gamma(\pr2;p_1,\ldots,p_r)\le \sqrt{r}$$
\end{cor}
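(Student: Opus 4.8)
The plan is to treat the two inequalities separately: the left inequality $r\varepsilon(\pr2;p_1,\ldots,p_r)\le \gamma(\pr2;p_1,\ldots,p_r)$ is a formal consequence of the two infimum definitions, while the right inequality $\gamma(\pr2;p_1,\ldots,p_r)\le\sqrt r$ rests on the standard fact that a class of positive self-intersection meeting an ample class positively has an effective multiple.

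For the first inequality I would work from the original infimum definition of $\varepsilon(\pr2;p_1,\ldots,p_r)$, taken over all effective $dL-\sum_i m_iE_i$ with $m_i\ge0$ and $\sum_i m_i>0$. The point is that every effective \emph{symmetric} class $dL-m\sum_iE_i$ (with $m>0$) is an admissible competitor in that infimum, for which $\sum_i m_i=rm$. Hence $\varepsilon(\pr2;p_1,\ldots,p_r)\le \frac{d}{rm}$, i.e. $r\varepsilon(\pr2;p_1,\ldots,p_r)\le \frac{d}{m}$, for each such class. Since $r\varepsilon(\pr2;p_1,\ldots,p_r)$ is thus a lower bound for the set $\{d/m : dL-m\sum_iE_i\in\EFF(X)\}$, it is at most the infimum of that set, which is $\gamma(\pr2;p_1,\ldots,p_r)$.

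For the second inequality I would fix positive integers $d,m$ with $d/m>\sqrt r$ and set $D=dL-m\sum_iE_i$, so that $D^2=d^2-rm^2>0$ and $D\cdot L=d>0$. The goal is to show $sD\in\EFF(X)$ for $s\gg0$. Applying Riemann--Roch (Theorem \ref{RR}) and using $\chi(\OO_X)=1$ (as $X$ is rational), we get $\chi(\OO_X(sD))=\frac{s^2D^2-sK_X\cdot D}{2}+1$, which tends to $+\infty$ since $D^2>0$. To turn this into a lower bound on $h^0$ I would kill the top cohomology via Serre duality: $h^2(X,\OO_X(sD))=h^0(X,\OO_X(K_X-sD))$, and since $L$ is nef (being the pullback of an ample class) while $(K_X-sD)\cdot L=-3-sd<0$, the class $K_X-sD$ cannot be effective, so this vanishes. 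Hence $h^0(X,\OO_X(sD))\ge\chi(\OO_X(sD))>0$ for large $s$, so some multiple of $D$ is effective and $\gamma(\pr2;p_1,\ldots,p_r)\le \frac{sd}{sm}=\frac{d}{m}$.

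Finally I would let $d/m$ range over all rationals strictly greater than $\sqrt r$: the previous step bounds $\gamma(\pr2;p_1,\ldots,p_r)$ above by each such $d/m$, and by density of the rationals these values approach $\sqrt r$ from above, forcing $\gamma(\pr2;p_1,\ldots,p_r)\le\sqrt r$. The only step with genuine content is the effectivity of a large multiple of $D$, and the main thing to get right there is the $h^2$-vanishing; I expect this to be routine once $L$ is observed to be nef, so that an effective class must meet it non-negatively.
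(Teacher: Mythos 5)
Your proposal is correct and follows exactly the route the paper takes: the left inequality is the formal comparison of the two infima (which the paper dismisses as ``clearly,'' following Chudnovsky), and the right inequality comes from observing that $d/m>\sqrt{r}$ makes $D=dL-m\sum_iE_i$ a class of positive self-intersection with $D\cdot L>0$, whence $sD\in\EFF(X)$ for $s\gg0$ and $\gamma(\pr2;p_1,\ldots,p_r)\le d/m$. The paper asserts the effectivity of $sD$ without proof, and your Riemann--Roch plus Serre duality argument (killing $h^2$ because $K_X-sD$ meets the nef class $L$ negatively) is precisely the standard justification of that step.
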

As Chudnovsky \cite{refCh} points out, 
although in general $r\varepsilon(\pr2;p_1,\ldots,p_r) < \gamma(\pr2;p_1,\ldots,p_r)$ (see Exercise \ref{ex13}),
if the points $p_1,\ldots,p_r$ are generic we have $r\varepsilon(\pr2;p_1,\ldots,p_r) = \gamma(\pr2;p_1,\ldots,p_r)$ (Exercise \ref{ex12}).
Thus Nagata's conjecture (Conjecture \ref{NagataConj}) is also equivalent to
$\gamma(\pr2;p_1,\ldots,p_r)=\sqrt{r}$ for $r>9$ generic points of $\pr2$.
Chudnovsky \cite{refCh} also remarks that $\gamma(\pr2;p_1,\ldots,p_r)$ is actually a limit:

\begin{prop}\label{gammalimit} Let $X$ be obtained by blowing up $r$ distinct points $p_i\in\pr2$. Then
$$\gamma(\pr2;p_1,\ldots,p_r)=\lim_{m\to\infty}\frac{d_m}{m}$$
where $d_m$ is the least $t$ such that $tL-m\sum_iE_i\in\EFF(X)$. Moreover, for each $n\ge1$,
we have 
$$\gamma(\pr2;p_1,\ldots,p_r)\le \frac{d_n}{n}.$$
\end{prop}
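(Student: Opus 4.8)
The plan is to recognize that the sequence $(d_m)_{m\ge1}$ is subadditive and then to invoke Fekete's subadditivity lemma, which states that for any subadditive sequence $(a_m)$ that is bounded below, the limit $\lim_{m\to\infty}a_m/m$ exists and equals $\inf_{m\ge1}a_m/m$. Once the subadditivity is in place, the proposition reduces to identifying $\inf_{m}d_m/m$ with $\gamma(\pr2;p_1,\ldots,p_r)$, after which both the limit assertion and the ``moreover'' inequality follow at once.

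First I would check that $d_m$ is well defined for every $m\ge1$, i.e.\ that some class $tL-m\sum_iE_i$ is effective. This holds by a dimension count: plane curves of degree $t$ having multiplicity at least $m$ at each $p_i$ are cut out by at most $r\binom{m+1}{2}$ linear conditions on the $\binom{t+2}{2}$-dimensional space $H^0(\pr2,\OO_{\pr2}(t))$, so for $t\gg0$ nonzero sections exist and $tL-m\sum_iE_i\in\EFF(X)$. Next comes the key observation. If $d_mL-m\sum_iE_i$ and $d_nL-n\sum_iE_i$ both lie in $\EFF(X)$ (realizing the least degrees for multiplicities $m$ and $n$), then their sum $(d_m+d_n)L-(m+n)\sum_iE_i$ lies in $\EFF(X)$ as well, since $\EFF(X)$ is a submonoid of $\Cl(X)$. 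Hence $d_{m+n}\le d_m+d_n$, so $(d_m)$ is subadditive, and it is clearly nonnegative, hence bounded below.

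Applying Fekete's lemma to $(d_m)$ then gives that $\lim_{m\to\infty}d_m/m$ exists and equals $\inf_{m\ge1}d_m/m$. It remains to identify this infimum with $\gamma(\pr2;p_1,\ldots,p_r)$: for each fixed $m$, among all $d$ with $dL-m\sum_iE_i\in\EFF(X)$ the ratio $d/m$ is smallest exactly when $d=d_m$, so taking the infimum over $m$ yields $\inf_{m\ge1}d_m/m=\inf\{d/m:dL-m\sum_iE_i\in\EFF(X)\}=\gamma(\pr2;p_1,\ldots,p_r)$ by Definition \ref{gammadef}. The inequality $\gamma(\pr2;p_1,\ldots,p_r)\le d_n/n$ for every $n\ge1$ is then immediate, being a single term bounded below by the infimum. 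The only substantive ingredient beyond bookkeeping is Fekete's lemma; the point requiring a moment's care is the finiteness of $d_m$, supplied by the dimension count above, and the rest rests on the elementary fact that effectivity is preserved under addition of classes.
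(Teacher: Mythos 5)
Your proposal is correct and takes essentially the same route as the paper: both arguments rest on the monoid property of $\EFF(X)$, which gives the subadditivity-type inequality for the sequence $(d_m)$, and both conclude by a Fekete-style argument identifying $\lim_m d_m/m$ with $\inf_m d_m/m = \gamma(\pr2;p_1,\ldots,p_r)$. The only difference is expository: you cite Fekete's lemma as a black box, while the paper proves the needed convergence by hand (via the non-increasing subsequence $d_{m!}/m!$ and the division algorithm), which is just an inline proof of that same lemma.
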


\begin{proof}
Clearly, $d_{rm}\le rd_m$, so $d_{rm}/(rm)\le d_m/m$. Therefore, $d_{m!}/(m!)\le d_n/n$ for every $n|m!$. 
Thus $d_{m!}/(m!)$ is a non-increasing sequence, so $\lim_{m\to\infty}\frac{d_{m!}}{m!}$ exists;
call it $l$. Given any $\delta>0$, we check for all $n\gg0$ that $l\le d_n/n\le l+\delta$.
Pick $m$ large enough that $l\le \frac{d_{m!}}{m!}\le l+\delta/2$.
Say $n\ge m!$ and write $n=a(m!)+c$, where $c$ is an integer with $0\le c<m!$.
Then $d_n\le d_{(a+1)(m!)}\le (a+1)d_{m!}$ so 
$$l\le \frac{d_{n!}}{n!}\le \frac{d_n}{n}\le \frac{(a+1)d_{m!}}{a(m!)+c} = 
\frac{ad_{m!}}{a(m!)+c} + \frac{d_{m!}}{a(m!)+c}\le
\frac{d_{m!}}{m!} + \frac{d_{m!}}{a(m!)}\le l+\delta/2+d_{m!}/(a(m!)),$$
and for $n\gg0$ we will have $a$ large enough such that $d_{m!}/(a(m!))\le \delta/2$.
We also now see $\gamma(\pr2;p_1,\ldots,p_r)\le \frac{d_{n!}}{n!}\le \frac{d_n}{n}$.
\end{proof}

\vskip.5in

\subsection{Exercises}

\begin{Ex}\label{ex10} Let $X$ be obtained by blowing up $r$ points $p_i\in\pr2$.
\begin{itemize}
\item[(a)] Show that $C^2\le {\overline C}^2$ for any divisor $C$ on $X$.
\item[(b)] If $C_1=a_0L-\sum_ia_i$ and $C_2=b_0L-\sum_ib_i$,
where $a_1\ge a_2\ge \cdots\ge a_r\ge0$ and $b_1\ge b_2\ge \cdots\ge b_r\ge0$,
show that $C_1\cdot C_2\le {\overline C}_1\cdot {\overline C}_2$.
\item[(c)] If $C_1$ and $C_2$ are abnormal, show that ${\overline C_1}\cdot {\overline C_2}<0$.
\item[(d)] Give an example of a curve $C$ with $C^2<0$ but such that $C$ is not abnormal.
\end{itemize}
\end{Ex}

\begin{Soln}[Exercise \ref{ex10}] (a) Let $C=dL-\sum_{i=1}^rm_iE_i$.
% Note that $C\cdot {\overline C}={\overline C}^2$.
Thus we need to show that $d^2-\sum_im_i^2\le d^2-r{\overline m}^2$;
i.e., that $\sum_im_i^2\ge r{\overline m}^2$. Let $v=(m_1,\ldots,m_r)$
and let ${\overline v}=({\overline m},\ldots,{\overline m})$.
Then we need to show, with respect to the Euclidean dot product,
that $0\le v^2-{\overline v}^2$, but ${\overline v}^2={\overline v}\cdot v$, so 
$0\le(v-{\overline v})^2=v^2+{\overline v}^2-2v\cdot{\overline v}
=v^2-{\overline v}^2$, as required.

(b) If $a=\sum_ia_i/r$ and $b=\sum_ib_i/r$, it suffices to show that 
$\sum_ia_ib_i\ge rab$. But $rab=a\sum_ib_i$, so we need only show
$\sum_ia_ib_i\ge a\sum_ib_i$. This is equivalent to showing
$\sum_i(ra_i)b_i\ge (ra)\sum_ib_i$, where $ra=\sum_ia_i$;
i.e., we can reduce to the case that $a$ is an integer. 
If $a_1,\cdots,a_r$ are not all equal, we can
pick some $j$ such that $a_j>a$ and some $l$ such that $a>a_l$.
Let $a'_j=a_j-1$ and $a'_l=a_l+1$, and $a'_i=a_i$ for $i\ne j,l$.
Then $\sum_i(a_i-a)^2 >\sum_i(a'_i-a)^2$ and $\sum_ia_ib_i=
(b_j-b_l)+\sum_ia'_ib_i\ge \sum_ia'_ib_i$. By repeating this procedure
we eventually obtain a sequence $a'_i$, $1\le i\le r$, such that $\sum_i(a'_i-a)^2=0$ and hence
$a=a'_i$ for all $1\le i\le r$ and so $\sum_ia_ib_i\ge \sum_ia'_ib_i=rab$.

(c) Let ${\overline C_1}=aL-b\sum_iE_i$ and let ${\overline C_2}=cL-d\sum_iE_i$.
Then $a/(rb)<1/\sqrt{r}$ and $c/(rd)<1/\sqrt{r}$, so $ac/(rbd)<1$
so ${\overline C_1}\cdot {\overline C_2}<0$.

(d) An easy example is given by $L-E_1-E_1-0E_3-0E_4$.
For a more interesting example, choose an irreducible quartic plane curve $C'$
with a triple point. Blow up the triple point and eight additional points on $C'$.
The proper transform of $C'$ is $C=4L-3E_1-E_2-\cdots-E_9$.
Then ${\overline C}^2>0$ but $C^2=-1$.
More generally, if you blow up 9 or more general enough points of $\pr2$, then
there are infinitely many exceptional curves (i.e., the prime divisors $E$ 
with $E^2=E\cdot K_X=-1$) by Exercise \ref{ex6}(c), but by Exercise \ref{ex11}
at most finitely many of them are abnormal.
\end{Soln}

\begin{Ex}\label{ex11} Let $X$ be obtained by blowing up $r$ points $p_i\in\pr2$.
Then there are at most finitely many prime divisors $C$ which are abnormal.
In fact, there are at most $r+1$ of them \cite{refS}.
\end{Ex}

\begin{Soln}[Exercise \ref{ex11}] Suppose there were an infinite set $S$ of them.
We get a mapping $\phi:S\to S_r$ by choosing, for each $C\in S$, a permutation
$\pi$ such that if $C=dL-\sum_im_iE_i$, then
$m_{\pi(1)}\ge m_{\pi(2)}\ge \cdots\ge m_{\pi(r)}$.
Thus there must be two prime divisors $C_1\ne C_2$ with $\phi(C_1)=\phi(C_2)$ if $S$ is infinite.
Hence by Exercise \ref{ex10}(b, c) we have $C_1\cdot C_2<0$, but $C_1\ne C_2$ implies
$0\le C_1\cdot C_2$, which is impossible.

To see that there are at most $r+1$, suppose there were more, say $C_i$,
for $1\le i\le t$ for $t>r+1$. Since $\Cl(X)$ has rank
$r+1$, there is a relation $\sum_im_iC_i=0$ where the $C_i$ are distinct. 
Let $P=\sum_{i, m_i>0}m_iC_i$ and let $N=-\sum_{i, m_i<0}m_iC_i$. Then $P-N=0$
hence $P=N$. Now, $P$ is abnormal, hence $P^2<0$, but $P^2=P\cdot N\ge 0$
which is a contradiction.
\end{Soln}

\begin{Ex}\label{ex13} Give an example such that 
$r\varepsilon(\pr2;p_1,\ldots,p_r)<\gamma(\pr2;p_1,\ldots,p_r)$.
\end{Ex}

\begin{Soln}[Exercise \ref{ex13}] 
Consider four points $p_1,\ldots,p_4$, exactly three of which 
(say $p_1,p_2,p_3$) are collinear. Let $C=L-E_1-E_2-E_3$ and let
$L_{ij}=L-E_i-E_j$.
Then $F=3L-E_1-E_2-E_3-E_4=C+L+(L-E_4)$ and $H=3L-E_1-E_2-E_3-2E_4=C+2(L-E_4)$
are nef and $C=5L-3E_1-3E_2-3E_3-3E_4=2C+L_{14}+L_{24}+L_{34}\in\EFF(X)$.
Since $H\cdot C=0$, we see $\gamma(\pr2;p_1,\ldots,p_r)=5/3$, and since $F\cdot C=0$,
we see by Exercise \ref{ex7b} that $\varepsilon(\pr2;p_1,\ldots,p_r)=1/3$.
\end{Soln}

\begin{Ex}\label{ex12} Let $X$ be obtained by blowing up $r$ generic points $p_i\in\pr2$.
\begin{itemize}
\item[(a)] Compute $\varepsilon(\pr2;p_1,\ldots,p_r)$ for each $r\le 9$ and each $r$ which is a perfect square.
\item[(b)] Show that $r\varepsilon(\pr2;p_1,\ldots,p_r)=\gamma(\pr2;p_1,\ldots,p_r)$.
\end{itemize}
\end{Ex}

\begin{Soln}[Exercise \ref{ex12}] (a) Suppose $r=d^2$ is a perfect square.
Let $X'$ be obtained by blowing up $r=d^2$ points $p_i'\in\pr2$ on a smooth 
plane curve $C$ of degree $d$, hence the class $C'=dL'-\sum_iE'_i$ of the proper transform of $C$ is
nef. Let $X$ be obtained by blowing up $r=d^2$ generic points $p_i\in\pr2$. 
Since by Theorem \ref{SCprin2} for any divisor $tL'-\sum_iE_i'\in\NEF(X')$ we also
have $tL-\sum_iE_i\in\NEF(X)$ we see that $\varepsilon(\pr2;p'_1,\ldots,p'_r)\le\varepsilon(\pr2;p_1,\ldots,p_r)$.
But $C'\in\NEF(X')\cap\EFF(X')$ together with $(C')^2=0$ implies that $\varepsilon(\pr2;p'_1,\ldots,p'_r)=1/d$
by Remark \ref{easySCcomp}. On the other hand, $\varepsilon(\pr2;p_1,\ldots,p_r)\le 1/\sqrt{r}$ by 
Corollary \ref{abnormalscomputecor}. 
Thus $\varepsilon(\pr2;p_1,\ldots,p_r)=1/\sqrt{r}$ when $r$ is a perfect square.

For the case of $r\le 9$ generic points, if $C=tL-\sum_im_iE_i$ is the class of an abnormal prime divisor,
then so is $C'=tL-\sum_im'_iE_i$, where the $m'_i$ are obtained by a permutation of the $m_i$
such that $m'_1\ge m'_2\ge \cdots\ge m'_r$. Given two abnormal prime divisors, $C$ and $D$, we thus see
that the permuted divisors $C'$ and $D'$ have $C'\cdot D'<0$, and hence $C'=D'$. I.e., up to permutations,
$C$ and $D$ are the same, so if $X$ has any abnormal curve, that curve gives the value 
of $\varepsilon(\pr2;p_1,\ldots,p_r)$. 

Thus, since $C=L-E_1-E_2$ is abnormal for $r=2$ or 3, we see
$\varepsilon(\pr2;p_1,p_2)=\varepsilon(\pr2;p_1,p_2,p_3)=1/2$. 
For $r=5$ or 6, take $C=2L-E_1-\cdots-E_5$
to see $\varepsilon(\pr2;p_1,\ldots,p_5)=2/5$. 
For $r=7$, take $C=3L-2E_1-E_2-\cdots-E_7$ to see that $\varepsilon(\pr2;p_1,\ldots,p_7)=3/8$,
and for $r=8$, take $C=6L-3E_1-2E_2-\cdots-2E_8$ to see that $\varepsilon(\pr2;p_1,\ldots,p_7)=6/17$.
(For the fact that $3L-2E_1-E_2-\cdots-E_7$ and $6L-3E_1-2E_2-\cdots-2E_8$ are classes of prime divisors,
use Exercise \ref{ex6} over the complex numbers. More generally, one can use quadratic transforms
to see that $3L-2E_1-E_2-\cdots-E_7$ and $6L-3E_1-2E_2-\cdots-2E_8$ are smooth rational curves.)

(b) Since the points are generic, if $C=tL-\sum_im_iE_i$ is the class of an effective divisor,
then so is $C'=tL-\sum_im'_iE_i$, where the $m'_i$ are obtained by any permutation of the $m_i$.
Thus $r{\overline C}\in\EFF(X)$ for any $C=aL-a_1E_1-\cdots-a_rE_r\in\EFF(X)$. 
But $r{\overline C}=raL-(a_1+\cdots+a_r)(E_1+\cdots+E_r)$
and $ar/(r(a_1+\cdots+a_r))=a/(a_1+\cdots+a_r)$, so 
$r\varepsilon(\pr2;p_1,\ldots,p_r)\ge\gamma(\pr2;p_1,\ldots,p_r)$. This together with
Corollary \ref{epsgammacor} gives 
$r\varepsilon(\pr2;p_1,\ldots,p_r)=\gamma(\pr2;p_1,\ldots,p_r)$.
\end{Soln}

%\vskip1in
\newpage

\section{Lecture: Computation of Seshadri Constants}

\subsection{Estimating Seshadri Constants}\label{computingSCs}

Given distinct points $p_i\in\pr2$ we now consider the problem of 
estimating $\varepsilon(\pr2;p_1,\ldots,p_r)$. Getting an upper bound
less than $1/\sqrt{r}$ is, by Corollary \ref{abnormalscomputecor}, 
equivalent to showing the existence of abnormal curves, and this is often quite hard.
Thus much of the focus has been on getting increasingly better lower bounds.

There have been two main methods used for this. Both methods can be adapted to
studying Seshadri constants on surfaces in general. For purposes of exposition
we will continue to focus on the case of $\pr2$.

The first method is to explicitly construct
nef divisors. For example, if one shows some divisor $F=dL-m\sum_iE_i$
is nef, then we know $m/d\le \varepsilon(\pr2;p_1,\ldots,p_r)$.  This is the method used by
\cite{refBi}, \cite{refHa5} and \cite{refHa4}. Both authors first construct
a nef divisor $F'=d'L-\sum_im_iE_i$, and then use an averaging process
to get a nef divisor of the form $F=dL-m\sum_iE_i$.

The second main method is to rule out the possible occurrence of abnormal curves.
This method has been applied by \cite{refX}, \cite{refT}, \cite{refST}, \cite{refSS},
\cite{refHR1} and \cite{refHR2}. Fundamentally it depends on the fact that if
$F=tL-m\sum_iE_i$ has $F^2>0$, then, as we show below, there are only finitely many classes
$C=dL-\sum_im_iE_i$ that could possibly be the class of a prime divisor
with $F\cdot C<0$ \cite[Lemma 2.1.3]{refHR1}. If one can show that none of these finitely many
classes is the class of a prime divisor, then $F$ is nef and $m/t\le 
\varepsilon(\pr2;p_1,\ldots,p_r)$. 

\begin{prop}\label{FabnormalFinite} Let $X$ be obtained by blowing up
distinct points $p_1,\ldots,p_r\in\pr2$, with $L$ and $E_i$ as usual. 
Assume that $F=tL-m\sum_iE_i$ has $F^2>0$ and $t>0$.
Then there is an explicitly computable finite set $S_F$ of classes 
which contains the class of every prime divisor $C$ with $C\cdot F<0$ (if any).
\end{prop}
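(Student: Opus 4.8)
The plan is to show that every prime divisor $C$ with $C\cdot F<0$ has \emph{bounded} coefficients, so that the offending classes can be listed explicitly. Write $C=dL-\sum_i m_iE_i$. The relevant case is $m>0$ (if $m=0$ then $F=tL$ meets every effective divisor nonnegatively, since $L$ is nef, and $S_F=\emptyset$ works). Because $F\cdot E_i=m>0$, a bad curve $C$ is none of the $E_i$; hence $C$ is a prime divisor distinct from each $E_i$, so $C\cdot E_i=m_i\ge0$ and $C\cdot L=d>0$. Since $L-E_i\in\NEF(X)$, intersecting with $C$ gives $d\ge m_i\ge0$ for every $i$. These inequalities already confine each $m_i$ to $\{0,1,\dots,d\}$, so the whole problem reduces to bounding $d$.

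To bound $d$ I would combine three facts. First, $F\cdot C<0$ reads $td-m\sum_i m_i<0$, i.e. $\sum_i m_i>td/m$. Second, the Adjunction Theorem applied to the prime divisor $C$ gives $C^2=2p_C-2-K_X\cdot C\ge -2-K_X\cdot C$; since $K_X=-3L+\sum_iE_i$ one computes $K_X\cdot C=-3d+\sum_i m_i$, so this becomes $d^2-\sum_i m_i^2\ge -2+3d-\sum_i m_i$, that is
$$\sum_i m_i^2\le (d-1)(d-2)+\sum_i m_i.$$
Third, Cauchy--Schwarz gives $\sum_i m_i^2\ge(\sum_i m_i)^2/r$. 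Writing $S=\sum_i m_i$ and feeding the third inequality into the second yields $S^2/r\le (d-1)(d-2)+S$, an upper bound for $S$ in terms of $d$ alone, while $F\cdot C<0$ forces $S>td/m$ from below.

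Putting the two estimates on $S$ together gives
$$\frac{td}{m}<S\le \tfrac12\Big(r+\sqrt{r^2+4r(d-1)(d-2)}\Big).$$
Squaring (legitimate once $d$ is large enough that the left-hand side is positive) and simplifying reduces this to a quadratic inequality in $d$ whose leading coefficient is $(t^2/m^2)-r=F^2/m^2$. A quadratic with \emph{positive} leading coefficient is negative only on a bounded interval, so $d$ is bounded above by an explicit constant $d_0=d_0(t,m,r)$. The decisive point is precisely that $F^2>0$ makes this leading coefficient strictly positive; geometrically this is the statement that $F$ lies strictly inside the positive cone, so a prime curve meeting it negatively cannot have arbitrarily large degree.

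Finally I would take $S_F$ to be the (finite, explicitly enumerable) set of classes $dL-\sum_i m_iE_i$ with $0<d\le d_0$ and $0\le m_i\le d$ that satisfy both the adjunction inequality above and $F\cdot C<0$; by construction it contains every prime divisor with $C\cdot F<0$. The main obstacle is not any single step but making the bound $d_0$ genuinely explicit and verifying that the squaring step and the reduction to a single quadratic are valid. The Cauchy--Schwarz estimate is the one place where sharpness could be lost, but since we only need finiteness of $S_F$ (not an optimal list) this loss is harmless.
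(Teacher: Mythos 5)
Your proof is correct, but it takes a genuinely different route from the paper's. The paper never touches adjunction or $K_X$: it uses $F^2>0$ and $F\cdot L>0$ to produce, via Riemann--Roch, an explicit $s$ with $sF\in\EFF(X)$; then any prime $C$ with $F\cdot C<0$ must be a component of every divisor in $|sF|$, so $C$ and $sF-C$ are both effective, whence $0\le C\cdot H_i\le sF\cdot H_i$ for any collection of nef classes $H_i$ spanning $\Cl(X)$ (e.g.\ $H_0=L$, $H_i=L-E_i$); since intersection numbers against a spanning set determine the class, this pins $C$ down to an explicit finite set. You instead bound the coefficients of $C$ arithmetically: adjunction with $p_C\ge0$ gives $\sum_im_i^2\le(d-1)(d-2)+\sum_im_i$, Cauchy--Schwarz converts this into an upper bound on $\sum_im_i$ of order $\sqrt{r}\,d$, and $F\cdot C<0$ forces $\sum_im_i>td/m$, so $d$ satisfies a quadratic inequality whose leading coefficient is $F^2/m^2$; the hypothesis $F^2>0$ enters your argument at exactly this point, whereas in the paper it enters through the effectivity of $sF$. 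The paper's method buys generality---it works on any surface once one can exhibit nef spanning classes and an effective multiple of $F$, with no appeal to the genus formula---and it is the form actually used later (Example \ref{almunifeg}), where the $H_i$ are chosen carefully to shrink $S_F$. Your method buys self-containedness and a closed-form degree bound $d_0$, with no Riemann--Roch step and no need to produce effective or nef divisors beyond $L-E_i$. Two small patches are needed. First, the proposition as stated does not assume $m>0$; if $m<0$ your opening reduction fails, since then $F\cdot E_i=m<0$, but in that case every prime divisor other than the $E_i$ meets $F$ positively, so $S_F=\{E_1,\ldots,E_r\}$ works trivially. Second, your assertion that $C\cdot L>0$ for a prime $C$ distinct from every $E_i$ deserves a word of justification: it holds because the points $p_i$ are distinct, so the $E_i$ are the only prime divisors contracted by $X\to\pr2$.
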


\begin{proof} Since $F^2>0$ and $F\cdot L>0$, we can find an 
explicit $s$ such that $sF\in\EFF(X)$ (but the smaller $s$ is the smaller 
$S_F$ will be). 

Let $E=\sum_iE_i$, and choose 
nef divisors $H_i$ that span $\Cl(X)$. For example,
$H_0=L$, and $H_i=L-E_i$ for $i>0$, or choose
$h_i>0$ large enough such that
$H_0=h_0L-E$ and $H_i=h_iL-E-E_i$ are in $\EFF(X)\cap \NEF(X)$.
(It is clear that $h_0=r$ and $h_i=r+1$ will suffice, but the smaller
one can choose the $h_i$ the smaller $S_F$ will be. Being able to choose
smaller values of the $h_i$ will depend on having some knowledge of how the
points $p_i$ are arranged, since if the points are collinear, then 
$h_0=r$ and $h_i=r+1$ are best possible.)

If $C$ is the class of a prime divisor with $F\cdot C<0$, then
$sF-C$ and $C$ are both in $\EFF(X)$, hence both meet every $H_i$
non-negatively so $0\le C\cdot H_i \le sF\cdot H_i$ for each $i$. 

Since the classes $H_i$ generate $\Cl(X)$, if for two classes $C_1$ and $C_2$ 
we have $C_1\cdot H_i=C_2\cdot H_i$ for all $i$, then $C_1=C_2$.
Thus there are only finitely many possible classes $C$ with 
$0\le C\cdot H_i \le sF\cdot H_i$ for all $i$. 
\end{proof}

When the $r$ points $p_i$ are general, one can narrow down the set $S_F$ even more \cite{refS}.

\begin{lem}\label{almostunif} Let $X$ be obtained by blowing up
general points $p_1,\ldots,p_r\in\pr2$, with $L$ and $E_i$ as usual. 
Assume that $C=dL-\sum_im_iE_i$ is abnormal; then all but at most one of the
coefficients $m_i$ are equal.
\end{lem}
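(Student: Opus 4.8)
The plan is to exploit the symmetry forced by genericity, together with the elementary fact that two distinct prime divisors meet non-negatively. Since $p_1,\ldots,p_r$ are general, the property of being the class of a prime divisor is invariant under permuting the points, hence under permuting the multiplicities: if $C=dL-\sum_i m_iE_i$ is the class of an abnormal prime divisor, then for every permutation $\sigma\in S_r$ the class $C_\sigma=dL-\sum_i m_{\sigma(i)}E_i$ is again the class of a prime divisor, and it is again abnormal because $\overline{C_\sigma}=\overline{C}$ depends only on $d$ and $\sum_i m_i$. I would first record, via Exercise \ref{ex10}(a) and the definition of abnormality, that $C^2\le \overline{C}^2<0$, so $C^2<0$; write $|C^2|=-C^2$.

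The key step is a one-line intersection computation for a transposition $\sigma=(j\,l)$. Expanding in the basis $L,E_1,\ldots,E_r$ gives
\[
C\cdot C_\sigma = d^2-\sum_i m_i m_{\sigma(i)} = \Big(d^2-\sum_i m_i^2\Big)+(m_j-m_l)^2 = C^2+(m_j-m_l)^2 .
\]
If $m_j\ne m_l$ then $C_\sigma\ne C$ as classes, so $C$ and $C_\sigma$ are distinct prime divisors and meet non-negatively; combined with $C^2=-|C^2|$ this forces $(m_j-m_l)^2\ge |C^2|$. Thus I obtain the dichotomy that for every pair $j\ne l$, either $m_j=m_l$ or $(m_j-m_l)^2\ge |C^2|$. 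Equivalently, the distinct values occurring among the $m_i$ are pairwise at least $\sqrt{|C^2|}$ apart.

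To finish I would play this spacing condition against a variance bound coming from abnormality. With $\overline m=(\sum_i m_i)/r$, a direct expansion gives $|C^2| = \sum_i(m_i-\overline m)^2 + (r\overline m^2-d^2)$, whose second summand equals $-\overline{C}^2>0$; hence $|C^2|>\sum_i(m_i-\overline m)^2$. Now suppose the conclusion fails, i.e. the multiset $\{m_i\}$ is neither constant nor of the form ``$r-1$ equal values plus one other''. Then either there are at least three distinct values, so the two extreme values $v_1<v_k$ satisfy $v_k-v_1\ge 2\sqrt{|C^2|}$ and already contribute $\sum_i(m_i-\overline m)^2\ge \frac{1}{2}(v_k-v_1)^2\ge 2|C^2|$; or there are exactly two values $v_1<v_2$ of multiplicities $p,q\ge 2$ with $p+q=r$, in which case $\sum_i(m_i-\overline m)^2=\frac{pq}{r}(v_2-v_1)^2\ge \frac{pq}{r}\,|C^2|\ge |C^2|$, using $pq\ge p+q=r$. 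Either way $\sum_i(m_i-\overline m)^2\ge |C^2|$, contradicting the strict inequality above; so the conclusion holds (the cases $r\le 2$ being trivial).

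The step I would be most careful to state precisely — and the conceptual crux — is the genericity input: that for general points the full symmetric group acts on the set of classes of abnormal prime divisors, so that each $C_\sigma$ is again a \emph{prime} (not merely effective) divisor. Primeness is exactly what licenses the non-negative intersection, and it is the only place ``general'' is used. The transposition identity is what makes the argument short and uniform in $r$; alternatively one could deduce the result from the bound of at most $r+1$ abnormal primes (Exercise \ref{ex11}) by counting the distinct rearrangements of $(m_1,\ldots,m_r)$, but the variance computation avoids a case split on $r$.
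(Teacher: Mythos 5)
Your proof is correct, and it takes a genuinely different route from the paper's. The paper deduces the lemma from Exercise \ref{ex11}: there are at most $r+1$ abnormal prime divisors, while (by the same genericity input you isolate) every permutation of the $m_i$ in an abnormal prime class yields another abnormal prime class; if the conclusion failed, the paper exhibits more than $r+1$ distinct rearrangements ($r+3$ of them when three distinct values occur among the $m_i$, and $\binom{r}{j}\ge 2r-2$ of them when exactly two values occur, each at least twice), a contradiction. That counting argument is precisely the alternative you sketch in your closing paragraph. Your argument instead replaces the cardinality count by two numerical inequalities: the transposition identity $C\cdot C_\sigma=C^2+(m_j-m_l)^2\ge 0$, which forces distinct multiplicities to be at least $\sqrt{|C^2|}$ apart, and the identity $|C^2|=\sum_i(m_i-\overline{m})^2-\overline{C}^2$, which by abnormality forces the variance of the $m_i$ to be strictly less than $|C^2|$; your case analysis (at least three values, or two values each repeated) shows these two constraints are incompatible unless at most one $m_i$ deviates, and the computations in both cases check out. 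What your route buys: it is self-contained (no appeal to Exercise \ref{ex11}, whose own proof needs Exercise \ref{ex10}(b,c) plus a rank argument), it avoids the combinatorial case-counting, and it yields quantitative information---the gap $\sqrt{|C^2|}$ between distinct multiplicity values---that the counting proof does not. What the paper's route buys: with Exercise \ref{ex11} as a black box, it is shorter and purely combinatorial. Both proofs rest on the same genericity input, which you rightly flag as the crux, and both (like the paper) read the hypothesis as ``abnormal \emph{prime} divisor''; that reading is forced, since a merely effective abnormal class (e.g.\ the sum of two distinct permutations of an abnormal prime, such as $6L-3E_1-3E_2-2E_3-\cdots-2E_7$ for $r=7$ general points) can violate the conclusion, and it matches how the lemma is applied in Example \ref{almunifeg}.
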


\begin{proof} 
By Exercise \ref{ex11} there are at most $r+1$ prime divisors $C=dL-\sum_im_iE_i$
which are abnormal, but since the points are general
any permutation of the $m_i$ is again an abnormal prime divisor. 
We may assume that $m_1\ge \cdots\ge m_r\ge0$.
Suppose that there is an index $i$ such that $m_1>m_i>m_r$.
Then there are $i-2$ permutations $\omega_j$ which are transpositions of $m_r$
with $m_j$, where $1<j< i$. There are $r-i-1$ more transpositions $\alpha_j$
of $m_1$ with $m_j$, where $i< j<r$. In addition, there are six permutations
in which we permute $m_1$, $m_i$ and $m_r$ with each other only.
This gives $(i-2)+(r-i-1)+6=r+3$ distinct permutations, contradicting there being
at most $r+1$ abnormal prime divisors. Thus at most two values can occur among the $m_i$.
The only other possibility to be ruled out is if the two values each occur at least twice.
So assume that
$m_1=\cdots=m_j>m_{j+1}=\cdots=m_r$, where $r\ge 4$ and $2\le j\le r-2$. The number of
distinct arrangements of the $m_i$ is $\binom{r}{j}$. Looking at Pascal's triangle it is clear that
$\binom{r}{j}>r+1$. (Since the entries in the triangle we're interested in are on the row 
beginning $1\ r\ \cdots$, but more than two spots from either end, we see 
$\binom{r}{j}$ is the sum of two entries on the row above it, each entry being at least $r-1$,
so $\binom{r}{j}\ge2r-2$, hence $2r-2 > r+1$, since $r\ge 4$.)
\end{proof}

The restrictions on possible abnormal prime divisors can be made even more stringent; see
\cite{refHR1} and \cite{refHR2}. 

\begin{Eg}\label{almunifeg}
Suppose we blow up six general points $p_1,\ldots,p_6$.
We will use the method of ruling out abnormal curves
to check that $F=5L-2\sum_iE_i$ is nef, and hence
that $\varepsilon(\pr2;p_1,\ldots,p_6)\ge 2/5$. Since $C=2L-E_1-\cdots-E_5\in\EFF(X)$
has $F\cdot C=0$, this shows $\varepsilon(\pr2;p_1,\ldots,p_6)=2/5$ by Exercise \ref{ex7b}.

First, note that $H_0=3L-\sum_iE_i=(2L-E_1-\cdots-E_4)+(L-E_5-E_6)$ is nef, since
each summand is the class of a prime divisor which $H_0$ meets non-negatively.
Also, $H_i=H_0-E_i\in\NEF(X)$. For example, $H_5=(2L-E_1-\cdots-E_5)+(L-E_5-E_6)$,
but $H_5$ meets each summand non-negatively, each of which is the class of a prime divisor.

Suppose $C$ is the class of a prime divisor such that $0>C\cdot F$.
Then $C$ is abnormal and by Lemma \ref{almostunif}
we may (after reindexing, if need be) assume that $C=dL-m\sum_iE_i-kE_1$ for some $k$.
First suppose $k=0$. Then we have $0\le C\cdot H_0=3d-6m\le F\cdot H_0=3$,
$0\le C\cdot H_1=3d-7m\le F\cdot H_1=1$.

Thus $7m\le 3d\le 6m+3$, so $m\le 3$. For $m=1$ we get $d=3$ (which fails $5d-12m=F\cdot C<0$), 
for $m=2$ we get $d=5$ (which also fails $5d-12m<0$),
and for $m=3$ we get $d=7$. But since $C$ is supposed to be a prime divisor it should
satisfy adjunction and thus must have $-2\le C^2 +C\cdot K_X$, but 
for $d=7$ with $m=3$ we find $C^2 +C\cdot K_X=-8$.

So suppose $C=dL-m\sum_iE_i-kE_1$ for some $k>0$ so $5d-12m-2k=C\cdot F<0$. 
We have $0\le C\cdot H_0=3d-6m-k\le F\cdot H_0=3$ and
$0\le C\cdot H_1=3d-7m-2k\le F\cdot H_1=1$.
Thus $7m+2k\le 3d\le 6m+k+3$, so $m+k\le 3$. Thus $(d,m,k)$ is either
$(1,0,1)$, $(2,0,3)$ or $(3,1,1)$, giving $C=L-E_1$ (which fails $F\cdot C<0$),
$C=2L-3E_1$ (which is not in $\EFF(X)$), and $C=3L-2E_1-E_2-\cdots-E_6$
(which also fails $F\cdot C<0$). 

Finally, assume $C=dL-m\sum_iE_i-kE_1$ for some $k<0$, so $5d-12m-2k=C\cdot F<0$.
Since $C$ should be the class of a prime divisor with $C\cdot L>0$, we have 
$C\cdot E_i\ge0$ for all $i$, hence $-k\le m$. 
We have $0\le C\cdot H_0=3d-6m-k\le F\cdot H_0=3$ and
$0\le C\cdot H_1=3d-7m-2k\le F\cdot H_1=1$.
Thus $7m+2k\le 3d\le 6m+k+3$ and hence also 
$5m\le7m+2k\le 3d$. Since $F=(2-E_1-\cdots-E_5)+(2-E_2-\cdots-E_6)+(L-E_1-E_6)$, 
we see $F\in\EFF(X)$, hence $F-C$ should also be effective, so
$d\le 5$, whence $5m\le 3d\le 15$ implies $1\le-k\le m\le 3$. 
The simultaneous solutions to $7m+2k\le 3d\le 6m+k+3$, $1\le-k\le m\le 3$ and
 $5d-12m-2k<0$ are $(d,m,k)\in\{(5,3,-3), (6,3,-2), (4,2,-1)\}$. None of these are effective.
For example, $E=2L-E_2-\cdots-E_6$ is a prime divisor, but $C=5L-3(E_2+\cdots+E_6)$
for $(d,m,k)=(5,3,-3)$; since $E\cdot C<0$, $C-E$ is effective if $C$ is, and likewise so are
$C-2E$ and $C-3E$, but $C-3E=-L$ is not effective, hence neither is $C$. The same argument
handles the other two cases.
 
Thus $F$ is nef, as claimed.
\end{Eg}

We now give an example of the alternative approach using the method of \cite{refHa5}
and \cite{refHa4}, based on the idea of unloading \cite{refR}.

\begin{prop}\label{altapp} Let $d,r,n$ be positive integers such that $r < d\sqrt{n}$ and $r\le n$.
Then for $n$ general points $p_i$,
we have $$\varepsilon(\pr2;p_1,\ldots,p_n)\ge \frac{r}{nd}.$$
\end{prop}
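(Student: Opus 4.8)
The plan is to exhibit, for general points, a single nef divisor whose self-intersection is positive and whose ratio of multiplicity to degree equals $r/(nd)$, and to obtain it by specializing the points onto an auxiliary plane curve.

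First I would recast the statement through the supremum description of the Seshadri constant in (\ref{altdefvareps}), namely $\varepsilon(\pr2;p_1,\ldots,p_n)=\sup\{m/t: tL-m\sum_iE_i\in\NEF(X),\ m>0\}$. From this, the asserted inequality follows as soon as the integral class $F=ndL-r\sum_iE_i$ is nef (take $t=nd$ and $m=r$). The hypothesis $r<d\sqrt n$ is exactly what gives $F^2=n(nd^2-r^2)>0$, so $F$ has positive self-intersection; consequently, by the Semicontinuity Principle \ref{SCprin}(b), whose positivity hypothesis is met, it suffices to exhibit a single configuration of essentially distinct points for which $F$ is nef, and nefness then descends to general points.

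The idea is cleanest when $n$ is a perfect square. Specialize the points to $n$ distinct points on a smooth plane curve $Y$ of degree $\sqrt n$; its proper transform has class $\tilde Y=\sqrt n\,L-\sum_iE_i$, which is prime with $\tilde Y^2=n-n=0$, hence nef (a prime divisor $C$ can meet $\tilde Y$ negatively only if $C=\tilde Y$, which would force $\tilde Y^2<0$). Since $r<d\sqrt n$ gives $nd-r\sqrt n>0$, the identity
\[
F=ndL-r\sum_iE_i=r\,\tilde Y+(nd-r\sqrt n)\,L
\]
displays $F$ as a nonnegative combination of the nef classes $\tilde Y$ and $L$, so $F$ is nef for this configuration; Theorem \ref{SCprin}(b) then gives nefness for general points, and hence $\varepsilon(\pr2;p_1,\ldots,p_n)\ge r/(nd)$. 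This already shows that the bound $r<d\sqrt n$ is the natural one.

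For general $n$ the number $\sqrt n$ is not an integer, so no single reduced curve carries a nef class of the form $\sqrt n\,L-\sum_iE_i$, and rounding the degree to $\lfloor\sqrt n\rfloor$ or $\lceil\sqrt n\rceil$ wastes too much. Here I would first symmetrize: for general points every coordinate permutation sends nef classes to nef classes (permuting general points preserves effectivity, as in the solution to Exercise \ref{ex12}(b)), so averaging a nef class $d'L-\sum_im_iE_i$ over all permutations produces the nef uniform class $d'L-\frac{1}{n}\big(\sum_im_i\big)\sum_iE_i$, whence $\varepsilon(\pr2;p_1,\ldots,p_n)\ge\big(\sum_im_i\big)/(nd')$. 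This reduces the task to building, by specialization, \emph{any} nef class $d'L-\sum_im_iE_i$ with $\big(\sum_im_i\big)/d'\ge r/d$. The main obstacle is precisely this construction for non-square $n$ (and small $d$): one places the points on smooth curves of degrees $\lfloor\sqrt n\rfloor$ and $\lceil\sqrt n\rceil$, using on each only as many points as its self-intersection permits, and then recovers the shortfall in the ratio caused by integer rounding by redistributing multiplicities through the unloading procedure of \cite{refR}, following the method of \cite{refHa5} and \cite{refHa4}; the hypothesis $r\le n$ enters at this stage to keep the multiplicity transfers within range. The reduction, the averaging, and the semicontinuity transfer are all routine; the real content is this unloading construction, which is where the integrality gap between $r/(nd)$ and the clean bound must be closed.
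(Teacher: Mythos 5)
Your reduction is sound and matches the paper's: it suffices to show $F=ndL-r(E_1+\cdots+E_n)$ is nef, and since $F^2=n(nd^2-r^2)>0$, Theorem \ref{SCprin}(b) reduces this to exhibiting a single configuration of essentially distinct points for which $F$ is nef. Your perfect-square case is also correct. But for general $n$ --- which is the actual content of the proposition --- there is a genuine gap: you defer everything to an unloading construction with curves of degrees $\lfloor\sqrt n\rfloor$ and $\lceil\sqrt n\rceil$ that you never carry out, and you acknowledge that this deferred step is ``the real content.'' That sketched route is not the workable one, and the detour through averaging over permutations is unnecessary.

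The idea you are missing is that the auxiliary curve should have degree $d$ --- the $d$ from the hypothesis --- not degree near $\sqrt n$, and that the specialization should use infinitely near points, which is exactly the generality that ``essentially distinct points'' in Theorem \ref{SCprin}(b) permits. Take a smooth plane curve $C_1$ of degree $d$, let $p_1'\in C_1$, and recursively let $p_{i+1}'$ be the point of the proper transform $C_{i+1}$ infinitely near to $p_i'$ for $i<r$ (here is where $r\le n$ enters: $r$ of the $n$ points go on the curve); for $r<i\le n$ take $p_i'$ infinitely near to $p_{i-1}'$ but off the curve. Then $dL'-E_1'-\cdots-E_r'$ (the class of the proper transform $C_{r+1}$), the classes $E_i'-E_{i+1}'$, and $E_n'$ are all classes of prime divisors; consequently $E_1'+\cdots+E_r'-rE_i'\in\EFF(X'_{n+1})$ for each $i>r$, and the unloading step is the identity
$$F=ndL'-r(E_1'+\cdots+E_n')=n\bigl(dL'-E_1'-\cdots-E_r'\bigr)+\sum_{r<i\le n}\bigl(E_1'+\cdots+E_r'-rE_i'\bigr),$$
which exhibits $F$ as effective, with irreducible components $C_{r+1}$ and various $E_i'-E_{i+1}'$. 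Now $F\cdot(E_i'-E_{i+1}')=0$ and $F\cdot C_{r+1}=nd^2-r^2>0$, so $F$ meets every component of this decomposition non-negatively, hence is nef for this configuration; Theorem \ref{SCprin}(b) then gives nefness for general points. Note how the two hypotheses are consumed exactly: $r\le n$ to place the points, and $r<d\sqrt n$ both to apply semicontinuity and to make $F$ meet the curve non-negatively. Your framing of the problem as an ``integrality gap'' in $\sqrt n$ is what led you away from this: no curve of degree near $\sqrt n$ is needed at all.
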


\begin{proof} 
It is enough to show that $ndL-r(E_1+\cdots+E_n)\in\NEF(X)$, where $X$ is the blow up of
$\pr2$ at general points $p_1,\ldots,p_n$. 
By Theorem \ref{SCprin}(b), it is enough to find essentially distinct points $p_i'$ of
$X_1=\pr2$ such that $ndL'-r(E_1'+\cdots+E_n')\in\NEF(X'_{n+1})$. Choose any smooth plane curve
$C_1$ of degree $d$. Let $p_1'\in C_1$. Recursively, let $X'_{i+1}$ be the blow up of $X'_i$ at
$p_i'$, let $C_{i+1}$ be the proper transform of $C_i$, and let $p_{i+1}'$ be the point
of $C_{i+1}$ infinitely near to $p_i'$. This defines $p_1',\ldots,p_r'$. If $n>r$, for $r<i\le n$, choose
$p_i'$ to be infinitely near to $p_{i-1}'$ but choose $p_{r+1}'$ not to be on $C_{r+1}$.

Thus $dL'-E_1'-\cdots-E_r'$ is the class of $C_{r+1}$, i.e., the proper transform of $C_1$, hence
the class of a prime divisor, as are $E_i'-E_{i+1}'$ for each $1\le i<n$ and $E_n'$.
In particular $E_i'-E_j'\in\EFF(X'_{n+1})$ for every $j>i$, and hence so is $E_1'+\cdots+E_r'-rE_i'$ for every
$i>r$. Since $ndL'-n(E_1'+\cdots+E_r')\in\EFF(X'_{n+1})$, we see
$$F=ndL'-r(E_1'+\cdots+E_n')=
(ndL'-n(E_1'+\cdots+E_r'))+\sum_{r<i\le n}(E_1'+\cdots+E_r'-rE_i')\in\EFF(X'_{n+1})$$
(this is the unloading step).
The irreducible components of this sum are $C_{r+1}$ and $E_i'-E_{i+1}'$ for various $i$,
but $F$ meets each one non-negatively (this is clear for $E_i'-E_{i+1}'$, and $F\cdot C_{r+1}\ge0$
since $nd^2-r^2>0$). So $F\in\NEF(X'_{n+1})$, as required.
\end{proof}

As another variation we have:

\begin{prop}\label{altapp2} Let $d,r,n$ be positive integers such that $n\ge r> d\sqrt{n}$.
Then for $n$ general points $p_i\in\pr2$,
we have $$\varepsilon(\pr2;p_1,\ldots,p_n)\ge \frac{d}{r}.$$
\end{prop}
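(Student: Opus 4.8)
The plan is to prove the equivalent statement that $F = rL - d\sum_{i=1}^n E_i \in \NEF(X)$ for the blow-up $X$ of $\pr2$ at $n$ general points; by the last (supremum) expression for $\varepsilon$ in (\ref{altdefvareps}) this gives $\varepsilon(\pr2;p_1,\ldots,p_n) \ge d/r$ at once, taking the nef class $rL - d\sum_i E_i$. Note first that $F^2 = r^2 - d^2 n > 0$ by the hypothesis $r > d\sqrt n$, so $F$ lands in the regime where the semicontinuity principle Theorem \ref{SCprin}(b) is available: it suffices to exhibit one configuration of points for which $F$ is nef. To verify nef-ness on such a configuration I would use the elementary fact that an effective divisor $D$ is nef as soon as $D\cdot\Gamma \ge 0$ for every prime divisor $\Gamma$ in its own support (a prime not in the support meets $D$ non-negatively automatically). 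Thus the problem reduces to writing $F$ as an effective sum $\sum_k \Gamma_k$ of irreducible curves with $F \cdot \Gamma_k \ge 0$ for each $k$.

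The decomposition I would aim for uses curves meeting the points with multiplicity one. If $\Gamma_k$ has degree $e_k$ and passes through a subset $S_k$ of the points, then $\Gamma_k = e_k L - \sum_{i \in S_k} E_i$ and $F\cdot\Gamma_k = re_k - d|S_k|$, so the desired inequality $F\cdot\Gamma_k \ge 0$ is exactly the condition that $\Gamma_k$ be \emph{not over-loaded}: $|S_k| \le (r/d)e_k$. Matching coefficients in $\sum_k \Gamma_k = F$ forces $\sum_k e_k = r$ and forces each point to lie on exactly $d$ of the curves, so the total number of incidences is $\sum_k |S_k| = dn$. Summing the over-loading bound gives $dn = \sum_k|S_k| \le (r/d)\sum_k e_k = r^2/d$, which is precisely $r^2 > d^2 n$; the hypothesis we are handed is exactly the incidence budget making such a decomposition numerically possible. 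Taking each $\Gamma_k$ through the maximal number $|S_k| = e_k(e_k+3)/2$ of general points (so that it is the unique, and generically irreducible, curve of its degree), the two matching conditions collapse to the purely numerical requirement of a partition $r = \sum_k e_k$ with $\sum_k e_k^2 = 2dn - 3r$ and all parts $e_k \le 2r/d - 3$. The case $(d,r,n) = (2,5,6)$ of Example \ref{almunifeg} is the model: here $2dn - 3r = 9 = 2^2 + 2^2 + 1^2$, and indeed $F = 5L - 2\sum_{i=1}^6 E_i$ splits as two conics through five of the points and a line through the remaining two, meeting $F$ in the numbers $0$, $0$ and $1$.

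With such a partition and a choice of subsets $S_k$ realizing the prescribed $d$-regular incidence pattern in hand, I would realize the curves geometrically. For general points the unique curve of degree $e_k$ through $S_k$ is irreducible, but to be safe one can instead produce the $\Gamma_k$ on a degenerate configuration (points taken infinitely near along smooth curves, as in the proof of Proposition \ref{altapp}) where irreducibility and the membership $\Gamma_k \in \EFF$ are transparent, and then transport nef-ness back to general points through Theorem \ref{SCprin}(b), which applies because $F^2 > 0$. This is the ``unloading'' flavour of the argument: one writes the symmetric divisor $F$ as a sum of concrete effective pieces and checks the numerical conditions piece by piece.

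The step I expect to be the main obstacle is the existence, uniform in $(d,r,n)$ subject only to $n \ge r > d\sqrt n$, of the combinatorial data: a partition of $r$ into parts bounded by $2r/d - 3$ whose squares sum to $2dn - 3r$, together with a Gale--Ryser type assignment of the points to curves making every point lie on exactly $d$ of them. The counting identity $dn \le r^2/d$ shows the budget is never exceeded, but converting this inequality into an actual partition and design is delicate precisely in the tight range where $r$ only slightly exceeds $d\sqrt n$ (the $(2,5,6)$ example sits at such a boundary, with the part bound $2r/d - 3 = 2$ forcing the use of conics). When $dn$ is small relative to $r$ there is slack and one may use non-maximal, lightly loaded curves; the real work is to handle the extremal regime and to confirm that the chosen curves are genuinely irreducible for the configuration used.
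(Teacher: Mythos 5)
Your proposal has a genuine gap, and it is not just that the combinatorial step is left open: the design problem you reduce to is actually unsolvable for some triples $(d,r,n)$ allowed by the hypotheses. The fatal constraint is the one you build in by insisting on distinct points and curves with simple base points, namely $|S_k|\le e_k(e_k+3)/2$ (a degree-$e_k$ curve can only be forced through that many general points), together with $|S_k|\le n$. Take $(d,r,n)=(3,10,10)$, which satisfies $10\ge 10>3\sqrt{10}$. Your matching conditions force $\sum_k e_k=10$ and $\sum_k|S_k|=dn=30$ with $|S_k|\le\min\bigl(e_k(e_k+3)/2,\ \tfrac{10}{3}e_k,\ 10\bigr)$; maximizing the total capacity over all partitions of $10$ gives at most $29$ (attained by the partition $(3,3,3,1)$, with capacities $9+9+9+2$), so no decomposition of $F$ of the kind you describe exists. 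The obstruction survives even if you relax to a multiple $mF$ (which nefness would allow but your setup does not): for $(d,r,n)=(1,11,100)$ every admissible curve carries at most $\min\bigl((e+3)/2,\ 100/e\bigr)\le 100/13<8$ points per unit of degree, while any decomposition of any $mF$ needs the average to be $n/r=100/11>9$. So in the tight regime $r$ close to $d\sqrt{n}$ — exactly the regime the proposition is about — the dimension count for curves through distinct points is incompatible with the incidence budget, and no amount of cleverness in the Gale--Ryser step can fix this.

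The paper's proof (the solution to Exercise \ref{ex16}) is designed precisely to evade that dimension count, and this is the idea you are missing. One degenerates to \emph{essentially distinct} (infinitely near) points: the first $r$ points are chosen successively on the proper transforms of a single smooth curve $C_1$ of degree $d$, and the remaining $n-r$ points infinitely near but off the curve. On this configuration $dL'-E_1'-\cdots-E_r'$ is the class of the prime divisor $C_{r+1}$ \emph{no matter how large $r$ is}; there is no $d(d+3)/2$ cap because the points are not distinct points of $\pr2$. One then decomposes not $F$ but $dF$: the class $rdL'-d^2(E_1'+\cdots+E_n')$ equals $r\,C_{r+1}$ plus a non-negative combination of the prime classes $E_i'-E_j'$ and $E_n'$ (the unloading step), which is numerically possible exactly because $r^2>nd^2$ (whence also $r>d^2$). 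Since $F\cdot C_{r+1}=0$, $F\cdot(E_i'-E_{i+1}')=0$ and $F\cdot E_n'>0$, the class $F$ meets every component of this effective decomposition non-negatively, hence is nef on the degenerate configuration, and Theorem \ref{SCprin}(b) (applicable since $F^2=d^2(r^2-nd^2)>0$) transports nefness back to general points. Your closing suggestion to "produce the $\Gamma_k$ on a degenerate configuration as in Proposition \ref{altapp}" gestures in the right direction, but as written you use the degeneration only to certify irreducibility of curves in a design whose constraints still come from distinct points; the real point of the degeneration is that it changes the design problem entirely, replacing your many multiplicity-one curves by a single degree-$d$ curve taken with multiplicity $r$. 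Your model case $(2,5,6)$ from Example \ref{almunifeg} is correct but misleading: it happens to admit a distinct-point decomposition, and that feature does not persist.
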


\begin{proof} See Exercise \ref{ex16}.
\end{proof}

\begin{Eg}\label{alteg} Again suppose we blow up six general points $p_1,\ldots,p_6$;
let $X$ be the surface we obtain.
Then $5>2\sqrt{6}$, so by Proposition \ref{altapp2}, we see that
$\varepsilon(\pr2;p_1,\ldots,p_6)\ge \frac{2}{5}$ and hence that
$F=5L-2\sum_iE_i\in\NEF(X)$. Since $C=2L-E_1-\cdots-E_5\in\EFF(X)$ 
and $F\cdot C=0$, we see by Remark \ref{easySCcomp}
that in fact $\varepsilon(\pr2;p_1,\ldots,p_6)=\frac{2}{5}$.
\end{Eg}

\vskip.3in

\subsection{Exercises}

\begin{Ex}\label{ex14} Compute $\varepsilon(\pr2;p_1,\ldots,p_r)$ and $\gamma(\pr2;p_1,\ldots,p_r)$
for every choice of $r<9$ distinct points of $\pr2$.
\end{Ex}

\begin{Soln}[Exercise \ref{ex14}] 
This can be done using the various possibilities (worked out
in \cite{refGHM}) for $\EFF(X)$ where $X$ is the blow up of
$\pr2$ at the $r$ points. Some of these cases are discussed in \cite{refCh}. 
\end{Soln}

\begin{Ex}\label{ex15} Let $X$ be the blow up of 12 general points.
Study whether $F=7L-2(E_1+\cdots+E_{12})$ is nef, using the method of
Example \ref{almunifeg}.
\end{Ex}

\begin{Soln}[Exercise \ref{ex15}] 
It is nef, using Proposition \ref{altapp2} with $r=7$ and $d=2$. However, using the method of
Example \ref{almunifeg} one is left with showing that in none of the following cases is
$C=dL-(m+k)E_1-m(E_2+\cdots+E_{12})$ an abnormal prime divisor:
\begin{verbatim}
      d=  7 m= 2 k=  1 
      d= 10 m= 3 k=  0 
      d=  3 m= 1 k= -1 
\end{verbatim}
This is clear for the last case, since $3L-E_1-\cdots-E_{11}$ is not effective.
The other two cases are harder to eliminate, but it is known that except for a few exceptional cases
which do not occur here that general points of small multiplicity impose independent conditions
on curves of degree $d$, if there are curves of degree $d$ passing through the points with the specified
multiplicities. (How big ``small'' is keeps increasing as more research is done, 
but certainly multiplicity at most 3 is covered by the results; see \cite{refDu}.)
\end{Soln}

\begin{Ex}\label{ex16} Let $d,r,n$ be positive integers such that $n\ge r > d\sqrt{n}$.
Then for $n$ general points $p_i\in\pr2$,
we have $$\varepsilon(\pr2;p_1,\ldots,p_n)\ge \frac{d}{r}.$$
\end{Ex}

\begin{Soln}[Exercise \ref{ex16}] Mimic the proof of Proposition \ref{altapp}.
It is enough by the semicontinuity principle to find essentially distinct points $p_i'$ of
$X_1=\pr2$ such that $rdL'-d^2(E_1'+\cdots+E_n')\in\NEF(X'_{n+1})$. Choose any smooth plane curve
$C_1$ of degree $d$. Let $p_1'\in C_1$. Recursively, let $X'_{i+1}$ be the blow up of $X'_i$ at
$p_i'$, let $C_{i+1}$ be the proper transform of $C_i$, and let $p_{i+1}'$ be the point
of $C_{i+1}$ infinitely near to $p_i'$. This defines $p_1',\ldots,p_r'$. If $n>r$, for $r<i\le n$, choose
$p_i'$ to be infinitely near to $p_{i-1}'$ but choose $p_{r+1}'$ not to be on $C_{r+1}$.

Thus $dL'-E_1'-\cdots-E_r'$ is the class of $C_{r+1}$, i.e., the proper transform of $C_1$, hence
the class of a prime divisor, as are $E_i'-E_{i+1}'$ for each $1\le i<n$ and $E_n'$.
In particular $E_i'-E_j'\in\EFF(X)$ for every $j>i$. 
Since $rdL'-r(E_1'+\cdots+E_r')\in\EFF(X'_{n+1})$ and since $r^2> nd^2$, by adding
to $rdL'-r(E_1'+\cdots+E_r')$
the classes $m_nE_n'$ and $m_{ij}(E_i'-E_j')$ with $i\le r$ and $j>r$
for appropriate choices of $m_{ij}\ge0$ (this is the unloading step), we obtain
$F=rdL'-d^2(E_1'+\cdots+E_n')$ with $F^2>0$. But $F\cdot C_{r+1}=0$, $F\cdot (E_i'-E_j')=0$ and $F\cdot E_r'>0$,
so $F\in\NEF(X'_{n+1})$, as required.
\end{Soln}

\begin{Ex}\label{ex17} Show $F=5L-E_1-\cdots-E_{21}$ is ample, when 
the $E_i$ are obtained by blowing up 21 general points $p_i$ of $\pr2$.
\end{Ex}

\begin{Soln}[Exercise \ref{ex17}] 
By Proposition \ref{altapp}, using $r=9$ and $d=2$, we see that 
$$\varepsilon(\pr2;p_1,\ldots,p_{21})\ge \frac{9}{42}.$$
Thus $D=42L-9(E_1+\cdots+E_{21})\in\NEF(X)$, so
clearly $45L-9(E_1+\cdots+E_{21})=9F$
(and even $43L-9(E_1+\cdots+E_{21})$, for that matter) is ample by the Nakai-Moiseson
criterion \cite{refHr} since $F^2>0$ and $F$ meets every curve positively (any prime divisor
orthogonal to $D$ must meet $D+L$ positively, since the only prime divisors 
orthogonal to $L$ are the $E_i$, which meet $D$ positively). 
\end{Soln}

%\vskip1in
\newpage

\section{Lecture: The Containment Problem (an application to Commutative Algebra)}

%\setcounter{thm}{0}
% \numberwithin{thm}{subsection}
% \renewcommand{\thethm}{\thesubsection.\arabic{thm}}
\subsection{Background}

The notions we've discussed above can be applied to questions of commutative algebra,
especially problems involving ideals of {\it fat points}. Let $p_1,\ldots,p_s\in\pr{n}$ be distinct points.
Let $R=k[\pr{n}]=k[x_0,\ldots,x_n]$ be the homogeneous coordinate ring of $\pr{n}$.
Let $I(p_i)\subset R$ be the ideal generated by all forms vanishing at $p_i$.
Given a 0-cycle $Z=m_1p_1+\cdots+m_sp_s$ (i.e., an element in the free abelian group on the points $p_i$)
with $m_i\ge0$ for all $i$, let $I(Z)$ be the homogeneous ideal $\cap_i I(p_i)^{m_i}$.
This is a saturated ideal which defines a 0-dimensional subscheme of $\pr{n}$. We will abuse notation
and use the 0-cycle $Z=m_1p_1+\cdots+m_sp_s$ to denote this subscheme, which we refer to as a 
fat point subscheme. We will denote the sheaf of ideals corresponding to $I(Z)$ by
$\shf{I}_Z$, hence $I(Z)=\oplus_{t\ge0}H^0(\pr{n},\shf{I}_Z(t))$, where 
$\shf{I}_Z(t)=\shf{I}_Z\otimes_{\OO_{\pr{n}}}\!\!\OO_{\pr{n}}(t)$.
In fact, more is true:

\begin{prop}\label{fatptprop} Given distinct points $p_i\in\pr{n}$ and integers $m_i$.
Let $Z$ be the fat point scheme $\sum_{m_i\ge0}m_ip_i$, let $\pi:X\to \pr{n}$ be the 
morphism obtained by blowing up the points $p_i$,
let $H$ be the pullback to $X$ of a general hyperplane and let $E_i$ be the blow up of $p_i$.
Then there is a natural isomorphism $\shf{I}_{Z}(t)\cong\pi_*(\OO_X(tH-\sum_im_iE_i))$ such that
$H^0(\pr{n},\shf{I}_Z(t))\cong H^0(X,\OO_X(tH-\sum_ia_iE_i))$
and so $I(Z)$ can be identified with $\oplus_{t\ge0}H^0(X,\OO_X(tH-\sum_im_iE_i))$. Moreover,
if $m_i\ge0$ for all $i$, then $H^q(\pr{n},\shf{I}_Z(t))\cong H^q(X,\OO_X(tH-\sum_ia_iE_i))$ 
holds for all $q\ge0$.
\end{prop}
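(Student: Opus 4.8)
The plan is to reduce everything to a local computation at each point $p_i$ together with the Leray spectral sequence for $\pi$. Since the $p_i$ are distinct, the exceptional divisors $E_i$ are pairwise disjoint and $\pi$ is an isomorphism over $\pr{n}\setminus\{p_1,\ldots,p_s\}$. Hence every direct image sheaf $R^q\pi_*\OO_X(tH-\sum_im_iE_i)$ can be computed one point at a time on a small neighborhood $U_j\ni p_j$ containing no other $p_i$, where $\pi$ restricts to the blow up of the single point $p_j$ and the line bundle restricts to $\OO_X(-m_jE_j)$.

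First I would establish the sheaf isomorphism, which gives the $H^0$ statements. Because $H=\pi^*(\text{hyperplane})$, the projection formula gives $R^q\pi_*\OO_X(tH-\sum_im_iE_i)\cong\OO_{\pr{n}}(t)\otimes R^q\pi_*\OO_X(-\sum_im_iE_i)$, so it suffices to understand $\pi_*\OO_X(-\sum_im_iE_i)$. Working locally at $p_j$, with $U_j=\Spec R$ for $R$ regular local with maximal ideal $\mathfrak{m}=I(p_j)$, the standard description of the blow up of a smooth point gives $\pi_*\OO_X(-m_jE_j)=\mathfrak{m}^{m_j}=I(p_j)^{m_j}$ when $m_j\ge0$, while for $m_j<0$ one gets $\pi_*\OO_X(-m_jE_j)=\OO_{\pr{n}}$ by normality (Hartogs, using $n\ge2$). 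Gluing these local descriptions identifies $\pi_*\OO_X(-\sum_im_iE_i)$ with the sheaf of functions vanishing to order $\ge m_i$ at each $p_i$ with $m_i\ge0$, i.e.\ with $\shf{I}_Z$. Twisting by $t$ and taking global sections yields the stated $H^0$ identification and the identification of $I(Z)$ with $\oplus_{t\ge0}H^0(X,\OO_X(tH-\sum_im_iE_i))$.

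For the moreover part I would prove the vanishing $R^q\pi_*\OO_X(-\sum_im_iE_i)=0$ for $q>0$ when all $m_i\ge0$, and then feed it into the Leray spectral sequence $H^p(\pr{n},R^q\pi_*\shf{F})\Rightarrow H^{p+q}(X,\shf{F})$ with $\shf{F}=\OO_X(tH-\sum_im_iE_i)$: the vanishing collapses the sequence to the edge isomorphisms $H^q(\pr{n},\pi_*\shf{F})\cong H^q(X,\shf{F})$, which combined with $\pi_*\shf{F}\cong\shf{I}_Z(t)$ is exactly the claim. The vanishing itself reduces, by the disjointness remark, to the single point blow up, where $E\cong\pr{n-1}$ and $\OO_X(-E)|_E=\OO_{\pr{n-1}}(1)$. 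I would prove it by induction on $m$ using the short exact sequence
$$0\to\OO_X(-mE)\to\OO_X(-(m-1)E)\to\OO_{\pr{n-1}}(m-1)\to0,$$
with base case $R^q\pi_*\OO_X=0$ $(q>0)$ for the blow up of a smooth point. For $m\ge1$ one has $H^q(\pr{n-1},\OO_{\pr{n-1}}(m-1))=0$ for all $q>0$, so the long exact sequence of $R\pi_*$ gives $R^q\pi_*\OO_X(-mE)\cong R^q\pi_*\OO_X(-(m-1)E)$ for $q\ge2$; and for $q=1$ the surjectivity of $\mathfrak{m}^{m-1}=\pi_*\OO_X(-(m-1)E)\to\pi_*\OO_{\pr{n-1}}(m-1)=\mathfrak{m}^{m-1}/\mathfrak{m}^{m}$ forces the connecting map to vanish, so $R^1\pi_*\OO_X(-mE)\hookrightarrow R^1\pi_*\OO_X(-(m-1)E)$, and the induction closes.

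The main obstacle is precisely this higher direct image vanishing $R^q\pi_*\OO_X(-mE)=0$ for $q>0,\ m\ge0$ on the single point blow up; everything else (the projection formula, Leray, and gluing over disjoint $E_i$) is formal once it is in hand. One should also be careful that the \emph{natural} isomorphism is the one induced by the inclusion $\pi_*\OO_X(tH-\sum_im_iE_i)\hookrightarrow\pi_*\OO_X(tH)=\OO_{\pr{n}}(t)$, so that it is compatible with the multiplicative structure used to identify the graded ideal $I(Z)$.
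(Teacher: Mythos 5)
Your proposal is correct. For the sheaf-level statement it follows the same route as the paper: projection formula, reduction to one point at a time via disjointness of the $E_i$, the identification $\pi_*\OO_X(-mE)\cong \shf{I}_p^{m}$ for $m\ge 0$, and extension of functions across the point (Hartogs/normality, where $n\ge 2$ is needed) for negative multiplicities; the only difference there is that you quote $\pi_*\OO_X(-mE)\cong\shf{I}_p^{m}$ as the standard description of a smooth point blow up, whereas the paper proves it, using $\pi^{-1}\shf{I}_{p_i}^{m_i}\cdot\OO_X=\OO_X(-m_iE_i)$, $\pi_*\OO_X=\OO_{\pr{n}}$, and a presheaf-level computation. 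Where you genuinely diverge is the key vanishing $R^q\pi_*\OO_X(-mE)=0$ for $q>0$, $m\ge0$, which both arguments need in order to collapse Leray (the paper's reference, Hartshorne Exercise III.8.1, is that same collapse). You induct on $m$ via $0\to\OO_X(-mE)\to\OO_X(-(m-1)E)\to\OO_{\pr{n-1}}(m-1)\to 0$, using vanishing of the higher cohomology of $\OO_{\pr{n-1}}(m-1)$ for $q\ge2$ and surjectivity of $\mathfrak{m}^{m-1}\twoheadrightarrow\mathfrak{m}^{m-1}/\mathfrak{m}^{m}$ to kill the connecting map at $q=1$, with the base case $R^q\pi_*\OO_X=0$ quoted as known. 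The paper instead computes the stalk of $R^q\pi_*\shf{L}$ at $p_i$ by the theorem on formal functions, as an inverse limit of the groups $H^q(jE_i,\OO_{jE_i}(m_i))$, and kills these by a second induction on dimension (slicing by divisors in the classes $H-E_i$ and $H$), ultimately reducing to $h^q(X,\OO_X)=0$. The two routes rest on essentially the same bottom fact (the cohomology of the structure sheaf is unchanged by a smooth point blow up, Hartshorne V.3.4 and its higher-dimensional analogue), so yours is complete provided you cite that base case rather than treat it as formal --- its usual proof is itself via formal functions. What your route buys is that all the work is concentrated in the multiplicity direction, with the role of $m\ge0$ visible in a single line, and no formal functions inside the induction; what the paper's buys is that it never leaves the divisor-class bookkeeping $aH-bE_i$, $a\ge b$, on $X$ itself. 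Your closing remark pinning down the natural isomorphism as the one induced by $\pi_*\OO_X(tH-\sum_im_iE_i)\hookrightarrow\pi_*\OO_X(tH)=\OO_{\pr{n}}(t)$ is exactly what makes the graded identification of $I(Z)$ multiplicative, and is if anything more explicit than the paper on this point.
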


\begin{proof} First, $\shf{I}_Z=\Pi_{m_i\ge0}\shf{I}_{p_i}^{m_i}$. If $m\ge0$ and
$\pi$ is the blow up of a single point $p\in \pr{n}$ where we set 
$E=\pi^{-1}(p)$, then we have a natural morphism
$\shf{I}_p^m\to \pi_*(\pi^{-1}\shf{I}_p^m)$ which induces a morphism
$\shf{I}_p^m\to \pi_*(\pi^{-1}\shf{I}_p^m\cdot \OO_X)=\pi_*\OO_X(-mE)$
and thus 
$$\shf{I}_Z=\Pi_{m_i\ge0}\shf{I}_{p_i}^{m_i}\to\Pi_{m_i\ge0}\pi_*\OO_X(-m_iE)
=\pi_*\OO_X(\sum_{m_i\ge0}-m_iE)\hookrightarrow\pi_*\OO_X(\sum_i-m_iE).$$
By the projection formula (\cite[Exercise II.5.1(d)]{refHr}),
we have a natural isomorphism 
$$\pi_*(\OO_X(tH-\sum_im_iE_i))\cong\OO_{\pr{n}}(t)\otimes\pi_*(\OO_X(-\sum_im_iE_i)),$$
so $\shf{I}_{Z}(t)\cong\pi_*(\OO_X(tH-\sum_im_iE_i))$ follows 
if we show that $\shf{I}_Z\cong\pi_*(\OO_X(-\sum_im_iE_i))$.

This is trivial if $n=1$, since then blowing up has no effect.
So assume $n>1$. For convenience we write $\shf{L}$
for $\OO_X(-\sum_im_iE_i)$, notationally suppressing its dependence  
on the $m_i$. 

We start by noting that $\pi_*\OO_X=\OO_{\pr{n}}$. 
(See the argument of \cite[Corollary III.11.4]{refHr}:
since $\pi$ is projective by \cite[Proposition II.7.16(c)]{refHr},
$\pi_*\OO_X$ is coherent. Thus
$\pi_*\OO_X$ is locally a sheaf of finitely generated $\OO_{\pr{n}}$-modules. 
Since $\pi$ is birational, on any affine open of $\pr{n}$, the ring $B$ given by $\pi_*\OO_X$ and the 
ring $A$ given by
$\OO_{\pr{n}}$ both have the same function field, with $A$ being integrally closed
since $\pr{n}$ is smooth, hence normal and $B$ being module finite over $A$
since $\pi$ is projective and $\OO_X$ and hence $\pi_*\OO_X$ are coherent \cite[Corollary II.5.20]{refHr}; 
i.e., $B$ is an integral extension of the integrally closed
ring $A$, with the same function field, so $A=B$ and thus $\pi_*\OO_X=\OO_{\pr{n}}$.)

Now we show that $\pi_*\OO_X(-m_iE_i)$ is either $\OO_{\pr{n}}$ 
(if $m_i\le0$) or $\shf{I}_{p_i}^{m_i}$ (if $m_i>0$). 
If $m_i\le0$, then we have a morphism $\OO_X\to \OO_X(-m_iE_i)$, hence 
$\OO_{\pr2}=\pi_*\OO_X\to \pi_*\OO_X(-m_iE_i)$. This is clearly an isomorphism except 
possibly at the point $p_i$. Let $p_i\in U$ be an affine open neighborhood.
Consider the commutative diagram
$$
\begin{array}{ccccc}
\OO_{\pr{n}}(U) & \stackrel{\cong}{\to}&\OO_X(\pi^{-1}(U))& \hookrightarrow&\OO_X(-m_iE_i)(\pi^{-1}(U))\\
\downarrow & & & & \downarrow\\
\OO_{\pr{n}}(U \setminus \{p_i\}) & &\to & &\OO_X(-m_iE_i)(\pi^{-1}(U)\setminus E_i)\\
\end{array}
$$
The left vertical arrow is an equality by \cite[Exercise I.3.20]{refHr} (see also \cite[Proposition II.6.3A]{refHr}) or by \cite[Exercise III.3.5]{refHr} and the right vertical arrow is injective since $X$ is integral. The bottom arrow is also an isomorphism (since $U \setminus \{p_i\}\cong \pi^{-1}(U) \setminus\{\pi^{-1}(p_i)\}=\pi^{-1}(U) \setminus E_i$), hence the other arrows are isomorphisms too, whence
$\pi_*\OO_X(-m_iE_i)\cong\OO_{\pr{n}}$. 

If $m_i>0$, consider the canonical morphism 
$\shf{I}_{p_i}^{m_i}\to \pi_*(\pi^{-1}\shf{I}_{p_i}^{m_i}\cdot\OO_X)=\pi_*\OO_X(-m_iE_i)$.
Now, $\pi_*(\pi^{-1}\shf{I}_{p_i}^{m_i})$ is the sheaf associated to the presheaf 
$U\mapsto\shf{I}_{p_i}^{m_i}(U)$, hence $ \pi_*(\pi^{-1}\shf{I}_{p_i}^{m_i}\cdot\OO_X)$ is the sheaf
associated to the presheaf $U\mapsto\shf{I}_{p_i}^{m_i}(U)\cdot\OO_X(\pi^{-1}(U))=
\shf{I}_{p_i}^{m_i}(U)\cdot\OO_{\pr2}(U)=\shf{I}_{p_i}^{m_i}(U)$.
I.e, the canonical sheaf morphism $\shf{I}_{p_i}^{m_i}\to \pi_*\OO_X(-m_iE_i)$
comes from an isomorphism of presheaves, hence is an isomorphism itself.

Thus $\shf{I}_Z\to \pi_*{\shf{L}}$ is locally an isomorphism hence it is an isomorphism, so 
$$H^0(\pr{n},\shf{I}_Z(t))\cong H^0(\pr{n},\pi_*\shf{L}(t))=H^0(X,\OO_X(tH-\sum_im_iE_i)).$$

Now assume that $m_i\ge0$ for all $i$ and let $\shf{L}$ denote $\OO_X(tH-\sum_im_iE_i)$
for an arbitrary integer $t$.
We conclude by applying \cite[Exercise III.8.1]{refHr}, showing that 
$H^l(X,\shf{L})=H^l(\pr{n},\pi_*\shf{L})$ for all $l>0$. For this we need to show
that $R^l\pi_*\shf{L}=0$ for all $l>0$, and to do this it is enough to check that
the stalks vanish. This is clear at points away from each point $p_i$
since $\pi$ is an isomorphism then. Thus $R^l\pi_*\shf{L}$ has support
at most at the points $p_i$, hence at $p_i$ it is equal to the inverse limit
of $H^l(jE_i, \OO_{jE_i}(m_i))$ over $j$ by \cite[Theorem III.11.1]{refHr}
(as in the proof of \cite[Corollary V.3.4]{refHr}), where $\OO_{jE_i}(m_i)$
denotes $\OO_{jE_i}\otimes_{\OO_X}\OO_X(-m_iE_i)$. Thus it suffices to show that
$H^l(jE_i, \OO_{jE_i}(m_i))=0$. Look at the exact sequence
$0\to \OO_X(-jE_i)\to \OO_X\to \OO_{jE_i}\to 0$ and tensor through by
$\OO_X((m_i+j)H-m_iE_i)$ to get 
$0\to \OO_X((m_i+j)(H-E_i))\to \OO_X((m_i+j)H-m_iE_i)\to \OO_{jE_i}(m_i)\to 0$.
The result will follow by showing that $h^l(X,\OO_X(aH-bE_i))=0$ for all $l>0$
if $a\ge b$. 

Let $Y$ be a prime divisor whose class is $H-E_i$ if $b>0$ or just $H$ if $b=0$.
Consider $0\to \OO_X(-Y)\to \OO_X\to \OO_Y\to 0$ and tensor through by $\OO_X(aH-bH_i)$ to get
$0\to \OO_X(a'H-b'E_i)\to \OO_X(aH-bE_i)\to \OO_Y(aH'-bE'_i)\to 0$,
where $H'=H\cap Y$, $E'_i=E_i\cap Y$, $a'=a-1$ and $b'$ is the maximum of $b-1$ and 0. 
Taking cohomology of this exact sequence shows that $h^l=0$ for the ends for all $l>0$
then $h^l=0$ for the middle term for all $l>0$.
Since $\OO_Y(aH'-bE'_i)$ is 
of the same form as what we wish to prove, but in dimension one less, 
and since the result is true in
dimension 1 (i.e., when $Y=\pr1$), we may assume the rightmost term has $h^l=0$ for
all $l>0$ by induction. Showing the same for the leftmost term
eventually reduces to showing $h^l(X, \OO_X)=0$ for all $l>0$. 
For this mimic the argument of \cite[Proposition V.3.4]{refHr}.
\end{proof}

\subsection{Symbolic Powers}

Let $P$ be a prime ideal in a polynomial ring $R=k[x_0,\ldots,x_n]$ over an algebraically closed field $k$.
By the Nullstellensatz, we know that $P=\cap_{P\subseteq M{\ \rm maximal}}M$.
The {\it symbolic power\/} $P^{(m)}$ of $P$ can be defined as 
$P^{(m)}=\cap_{P^m\subseteq M{\ \rm maximal}}M^m$ (see \cite[Theorem 3.14]{refEs}).
This generalizes nicely to the case of an ideal $I(Z)=\cap I(p_i)$ of points $p_1,\ldots,p_s\in\pr{n}$, 
where we define the $m$-th symbolic power $I(Z)^{(m)}$ to be
$$I(Z)^{(m)}=\cap_i(I(p_i)^m).$$ 
I.e., $I(Z)^{(m)}=I(mZ)$, where $mZ$ is the fat point scheme $mp_1+\cdots+mp_s$.
(This is consistent with the definition of symbolic powers used
in \cite{refHH}, in terms of primary decompositions.)

We will for simplicity focus here on the case of symbolic powers of ideals of points in projective space.
See \cite{refPSC} for greater generality.

\subsection{The Containment Problem}

Let $Z=p_1+\cdots+p_s\subset \pr{n}$ and let $I=I(Z)$. Clearly, $I^m\subseteq I^{(m)}$.
In fact, we have: 

\begin{lem}\label{ezcontainment} Let $Z=p_1+\cdots+p_s\subset \pr{n}$ and let $I=I(Z)\subseteq R=k[\pr{n}]$.
Then $I^r\subseteq I^{(m)}$ if and only if $r\ge m$.
\end{lem}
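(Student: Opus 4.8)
The plan is to prove the two implications of the biconditional separately, the forward direction being purely formal and the converse reducing to a local computation at a single point.

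First I would dispose of the direction $r \ge m \Rightarrow I^r \subseteq I^{(m)}$. Since $I = I(Z) = \cap_i I(p_i)$, for each $i$ we have $I \subseteq I(p_i)$, and hence $I^r \subseteq I(p_i)^r \subseteq I(p_i)^m$, the last containment holding because the ordinary powers of a fixed ideal form a descending chain and $r \ge m$. Intersecting over all $i$ gives $I^r \subseteq \cap_i I(p_i)^m = I^{(m)}$, as desired. This direction uses nothing beyond the definitions of $I$ and $I^{(m)} = I(mZ) = \cap_i I(p_i)^m$ recalled above.

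For the converse I would argue contrapositively: assuming $r < m$, I would show $I^r \not\subseteq I^{(m)}$. My primary route is localization at the prime $P = I(p_1)$. Because the points are distinct, $p_1 \notin \{p_j\}$ for $j \ne 1$, so each $I(p_j)$ with $j \ne 1$ becomes the unit ideal in $R_P$; consequently $I_P = I(p_1)_P = P R_P =: \mathfrak m$ and $(I^{(m)})_P = (I(p_1)^m)_P = \mathfrak m^m$, where $R_P$ is a regular local ring of dimension $n \ge 1$. If $I^r \subseteq I^{(m)}$ held, localizing would force $\mathfrak m^r \subseteq \mathfrak m^m$; combined with the always-valid reverse containment $\mathfrak m^m \subseteq \mathfrak m^r$ this gives $\mathfrak m^r = \mathfrak m^{r+1}$ (as $r+1 \le m$), so by Nakayama $\mathfrak m^r = 0$, contradicting the fact that $R_P$ is a domain with $\mathfrak m \ne 0$. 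Hence $r \ge m$. An equivalent, more geometric alternative is to exhibit an explicit witness: after fixing coordinates with $p_1 = [1:0:\cdots:0]$, take a hyperplane $H_1$ through $p_1$ and, for each $j \ge 2$, a hyperplane $H_j$ through $p_j$ but missing $p_1$ (possible since the points are distinct), so that $f = H_1 H_2 \cdots H_s \in I$ has $\mult_{p_1}(f) = 1$; then $f^r \in I^r$ has $\mult_{p_1}(f^r) = r < m$, so $f^r \notin I(p_1)^m \supseteq I^{(m)}$.

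The main obstacle is exactly the converse: seeing that containment can be detected by the order of vanishing at one point. The localization route sidesteps the subtle point that $I(p_1)^m$ consists precisely of the forms vanishing to order $\ge m$ at $p_1$; the explicit-witness route makes this visible but relies on the remark that for a single reduced point the ordinary power $I(p_1)^m$ agrees with its symbolic power, which holds because $I(p_1)$ is generated by a regular sequence of $n$ linear forms. Either way the essential input is the regularity of the local ring at $p_1$, which converts the abstract inclusion into the numerical inequality $r \ge m$.
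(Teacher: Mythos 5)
Your proof is correct and takes essentially the same approach as the paper's: the forward direction is the trivial chain $I^r\subseteq I^m\subseteq I^{(m)}$, and the converse localizes at $p_1$ so that the hypothesis collapses to a containment of powers of the maximal ideal $\mathfrak{m}$ of a regular local ring, forcing $r\ge m$. The paper passes to an affine chart and leaves the final numerical step implicit, while you localize at the prime $I(p_1)$ and justify that step via Nakayama (also sketching an explicit-witness variant), but these are only cosmetic differences.
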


\begin{proof} See Exercise \ref{ex18}.
\end{proof}

Understanding the reverse containment is a much harder largely open problem:

\begin{Prob}\label{ContainmentProb} Let $Z=p_1+\cdots+p_s\subset \pr{n}$ and let $I=I(Z)\subseteq R=k[\pr{n}]$.
Determine all $r$ and $m$ such that $I^{(m)}\subseteq I^r$.
\end{Prob}

Since $I^{(m)}\subseteq I^r$ implies $I^m\subseteq I^{(m)}\subseteq I^r\subseteq I^{(r)}$, 
by Lemma \ref{ezcontainment} we see $m\ge r$. Also, $I^{(1)}=I^1$,
and clearly, $m'\ge m$ implies $I^{(m')}\subseteq I^{(m)}$, so we can restate
Problem \ref{ContainmentProb} as:

\begin{Prob}\label{ContainmentProb2} Let $Z=p_1+\cdots+p_s\subset \pr{n}$ and let $I=I(Z)\subseteq R=k[\pr{n}]$.
Given $r\ge2$, determine the least $m\ge r$ such that $I^{(m)}\subseteq I^r$.
\end{Prob}

As an asymptotic first step, this suggests the following definition and problem:

\begin{Def}[\cite{refBH}]\label{rhodef} Let $Z=p_1+\cdots+p_s\subset \pr{n}$ and let $I=I(Z)\subseteq R=k[\pr{n}]$.
Then define the {\it resurgence} of $I$ to be
$$\rho(I)=\sup\Big\{\frac{m}{r}: I^{(m)}\not\subseteq I^r\Big\}.$$
\end{Def}

\begin{Prob}[\cite{refBH}]\label{rhoProb} Let $Z=p_1+\cdots+p_s\subset \pr{n}$ and let $I=I(Z)\subseteq R=k[\pr{n}]$.
Compute or at least give bounds on $\rho(I)$.
\end{Prob}

It is not clear {\it a priori} that $\rho(I)$ is even finite. Results of Swanson \cite{refSw} showed in many cases that it is
and inspired the results of \cite{refELS} and \cite{refHH}. We state a simplified version of the result of \cite{refHH}:

\begin{thm}\label{HHThm}
Let $Z=p_1+\cdots+p_s\subset \pr{n}$ and let $I=I(Z)\subset R=k[\pr{n}]$.
Then $I^{(nr)}\subseteq I^r$ for each $r\ge 1$, hence $\rho(I)\le n$. 
\end{thm}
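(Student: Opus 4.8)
The plan is to prove this the way Ein--Lazarsfeld--Smith do, via the asymptotic multiplier ideals attached to the graded system of symbolic powers $\symbsystem{I}=\{\symbolicpower{I}{m}\}_{m\ge0}$ (recall $\symbolicpower{I}{p}\cdot\symbolicpower{I}{q}\subseteq\symbolicpower{I}{p+q}$, so this really is a graded system). Since multiplier ideals are a local, characteristic-zero construction, I would first localize near each point: $p_i$ is a smooth point of the $n$-dimensional variety $\pr{n}$, so in the regular local ring at $p_i$ the ideal $I$ becomes the maximal ideal $\a_i$ and $\symbolicpower{I}{m}$ becomes $\a_i^{m}$. All the multiplier-ideal input I need — the existence of $\asympt{c}{I}$ via a log resolution, the stabilization of $\multideal(\tfrac{c}{p}\symbolicpower{I}{p})$ for divisible $p$, and subadditivity — I would take from the theory of multiplier ideals, e.g. \cite{refTe}. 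Abbreviate $b_m=\asympt{m}{I}$.

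The proof then rests on three containments. First (each symbolic power sits inside its multiplier ideal): $\symbolicpower{I}{m}\subseteq b_m$ for all $m\ge1$. This is a one-point computation, since the stalk of $b_m$ at $p_i$ is $\multideal(\a_i^{m})=\a_i^{\,m-n+1}$ (the unit ideal when $m<n$), which contains $\a_i^{m}$, the stalk of $\symbolicpower{I}{m}$. Second (subadditivity), the crucial formal property of the asymptotic multiplier ideals of a graded system: $b_{c+c'}\subseteq b_c\,b_{c'}$, whence by iteration $b_{nr}\subseteq b_n^{\,r}$. Third (the coefficient $n=\codim$ brings the multiplier ideal down to $I$ itself): $b_n\subseteq I$. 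Again this is local: the stalk of $b_n$ at $p_i$ is $\multideal(\a_i^{n})=\a_i^{\,n-n+1}=\a_i$, so every section of $b_n$ vanishes at each $p_i$ and hence lies in $I=\cap_i\a_i$. It is precisely the fact that points have codimension $n$, so that $\lct(\a_i)=n$ and $\multideal(\a_i^{n})=\a_i$, that pins down the constant $n$ in the theorem.

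Combining the three gives the statement in a single line:
\[
\symbolicpower{I}{nr}\ \subseteq\ b_{nr}\ \subseteq\ b_n^{\,r}\ \subseteq\ I^{r},
\]
the three inclusions being the first, second, and third containments in turn. Taking the supremum of $m/r$ over pairs with $\symbolicpower{I}{m}\not\subseteq I^{r}$ then yields $\rho(I)\le n$.

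The step I expect to be the real obstacle is the second, subadditivity $b_{c+c'}\subseteq b_c\,b_{c'}$. Unlike the first and third, it is not a one-point calculation but the genuinely deep input, proved (following Demailly--Ein--Lazarsfeld) by restricting to the diagonal and applying the restriction theorem for multiplier ideals; verifying that the asymptotic multiplier ideals of $\symbsystem{I}$ are well defined and subadditive is where essentially all the work lies. I would also remark that this route is confined to characteristic zero, whereas the original argument of \cite{refHH} avoids multiplier ideals altogether, using tight closure, and so yields the statement in every characteristic.
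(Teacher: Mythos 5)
Your outline is correct, but it is genuinely a different route from the paper's, which in fact exhibits no proof of Theorem \ref{HHThm}: the general statement is cited from Hochster--Huneke \cite{refHH}, and the only mechanism the paper actually carries out is the characteristic-$p$ one (Example \ref{HHarg}, resting on Proposition \ref{Frobpower} and Lemma \ref{Frobex}), which proves the containment when ${\rm char}(k)=p>0$ and $r=q$ is a power of $p$ via the pigeonhole inclusion $I(p_i)^{qn}\subseteq I(p_i)^{[q]}$ (the ideal of a point has $n$ generators) together with the compatibility $\cap_i I(p_i)^{[q]}=(\cap_i I(p_i))^{[q]}$ of Frobenius powers with intersections. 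What you propose is instead the Ein--Lazarsfeld--Smith argument \cite{refELS}, which the paper mentions but does not perform, and your skeleton is the right one: the local computations $\multideal(\a_i^{m})=\a_i^{m-n+1}$ at a smooth point of an $n$-fold, hence $\lct(\a_i)=n$ and $\multideal(\a_i^{n})=\a_i$, correctly pin down why the exponent is $n$, and subadditivity is indeed the one deep input. Two caveats are worth recording. First, your steps are checked on stalks of ideal sheaves, while the conclusion is a containment of homogeneous ideals; since $I^r$ need not be saturated, the chain $b_{nr}\subseteq b_n^{\,r}\subseteq I^r$ must be run with the graded (equivalently, affine-cone) version of asymptotic multiplier ideals, which is precisely what \cite{refTe} supplies, so your citation does cover this, but the issue deserves explicit mention. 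Second, and more substantively, the multiplier-ideal route is confined to characteristic $0$, whereas the paper's statement is over an arbitrary algebraically closed field; so as written your proposal proves strictly less than Theorem \ref{HHThm}, the remaining characteristics being exactly what the tight-closure/Frobenius argument of \cite{refHH} buys. Both proofs share the same architecture --- an intermediate ideal $J$ with $I^{(nr)}\subseteq J\subseteq I^r$ --- with $J$ an asymptotic multiplier ideal in yours (the constant $n$ arising as a log canonical threshold) and a Frobenius power in the paper's exhibited special case (the constant $n$ arising as the number of generators of the ideal of a point).
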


Both for \cite{refELS} and for \cite{refHH}, the proof essentially involves finding an ideal
$J$ such that one can check both that $I^{(nr)}\subseteq J$ and that $J\subseteq I^r$; 
\cite{refELS} uses asymptotic multiplier ideals for $J$ (see \cite{refTe}
for an exposition of this approach),
while the proof of \cite{refHH} uses {\it Frobenius} powers
for $J$ (with a dash of tight closure to get the general result). 
Example \ref{HHarg} exhibits the role of Frobenius powers; it is actually a special case of the Hochster-Huneke
proof of Theorem \ref{HHThm}. For the example we will need some results on Frobenius powers:

\begin{Def}\label{Frobpwrdef} Let $I\subseteq R=k[\pr{n}]$ be an ideal.
Assume ${\rm char}(k)=p>0$ and let $q$ be a power of $p$. Define the $q$-th Frobenius power $I^{[q]}$
of $I$ to be the ideal generated by $I^q$.
\end{Def}

\begin{prop}\label{Frobpower} Let $I, J\subseteq R=k[\pr{n}]$ be ideals, where 
${\rm char}(k)=p>0$ and $q$ is a power of $p$.
Then $(I\cap J)^{[q]}=I^{[q]}\cap J^{[q]}$.
\end{prop}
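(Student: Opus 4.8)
The plan is to identify the Frobenius power $I^{[q]}$ with the extension of $I$ along the Frobenius endomorphism and then to deduce the equality from flatness of Frobenius. Since $q$ is a power of $p=\operatorname{char}(k)$, the map $\phi\colon R\to R$, $\phi(f)=f^{q}$, is a ring homomorphism, because every cross term in $(f+g)^{q}$ has a coefficient divisible by $p$. Viewing the target copy of $R$ as an $R$-algebra $S$ through $\phi$ (so that $r\in R$ acts on $S$ by multiplication by $r^{q}$), the extended ideal $KS$ of any ideal $K\subseteq R$ is exactly the ideal generated by $\{g^{q}:g\in K\}$, i.e.\ $KS=K^{[q]}$. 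Thus the assertion $(I\cap J)^{[q]}=I^{[q]}\cap J^{[q]}$ is precisely the statement that extension along $\phi$ commutes with the intersection, namely $(I\cap J)S=IS\cap JS$.

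The inclusion $(I\cap J)^{[q]}\subseteq I^{[q]}\cap J^{[q]}$ is immediate, since $K\mapsto K^{[q]}$ is inclusion-preserving and $I\cap J$ is contained in both $I$ and $J$. For the reverse inclusion I would use the standard principle that a flat ring map commutes with finite intersections of ideals. Begin with the exact sequence of $R$-modules
$$0\to R/(I\cap J)\xrightarrow{\ \Delta\ }R/I\oplus R/J\xrightarrow{\ \delta\ }R/(I+J)\to0,\qquad \Delta(\bar f)=(\bar f,\bar f),\ \ \delta(\bar a,\bar b)=\overline{a-b},$$
whose exactness is a direct verification. Tensoring over $R$ with $S$ and using $R/K\otimes_{R}S\cong S/KS$ gives a map $S/(I\cap J)S\to S/IS\oplus S/JS$; if $S$ is flat over $R$ this map is injective, and its injectivity says exactly that any $s\in IS\cap JS$ already lies in $(I\cap J)S$. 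Rewriting via $KS=K^{[q]}$ yields $I^{[q]}\cap J^{[q]}\subseteq(I\cap J)^{[q]}$, completing the equality.

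The one nonformal ingredient, and the step I expect to be the crux, is the flatness of $S$ over $R$, i.e.\ of the Frobenius endomorphism of $R$. I would verify this concretely using that $k$ is algebraically closed and hence perfect: then $k^{q}=k$, so $\phi(R)=k[x_0^{q},\dots,x_n^{q}]$, and $R=k[x_0,\dots,x_n]$ is free as a module over $k[x_0^{q},\dots,x_n^{q}]$ with the finite monomial basis $\{x_0^{a_0}\cdots x_n^{a_n}:0\le a_i<q\}$ (each monomial factors uniquely as a $q$-th power times such a basis monomial). Freeness implies flatness, so $\phi$ is flat. (This is a special case of Kunz's theorem, that a Noetherian ring of characteristic $p$ is regular precisely when Frobenius is flat.) Once this flatness is in hand, the exact-sequence argument of the previous paragraph finishes the proof mechanically.
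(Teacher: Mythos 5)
Your proof is correct, but note that the paper itself offers no argument for this proposition: its ``proof'' is just a pointer to \cite[Lemma 13.1.3]{refHS} and \cite[Example 8.4.4]{refPSC}. Your route --- identifying $K\mapsto K^{[q]}$ with extension of ideals along the Frobenius endomorphism $\phi(f)=f^q$, reducing the equality to the fact that a flat ring map commutes with finite intersections of ideals via the exact sequence $0\to R/(I\cap J)\to R/I\oplus R/J\to R/(I+J)\to 0$, and then proving flatness of Frobenius --- is exactly the standard argument underlying those citations, with one difference: in the literature the flatness of Frobenius is usually quoted as Kunz's theorem (valid for any regular ring), whereas you verify it by hand, using that $k$ is algebraically closed (hence perfect) so that $\phi(R)=k[x_0^q,\ldots,x_n^q]$ and $R$ is free over this subring on the $q^{n+1}$ monomials with exponents less than $q$. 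Your verification makes the proof self-contained and elementary in precisely the case needed here; the citation route buys brevity and the generality of arbitrary regular rings. One point of interpretation you handled correctly: the paper's Definition \ref{Frobpwrdef} (``the ideal generated by $I^q$'') must be read as the ideal generated by the $q$-th powers of elements (equivalently, of generators) of $I$ --- read literally it would say $I^{[q]}=I^q$, for which the proposition fails in general --- and that reading is the one matching your identification $KS=K^{[q]}$ as well as the paper's own use of $I^{[q]}$ in Lemma \ref{Frobex} and Example \ref{HHarg}.
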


\begin{proof} See \cite[Lemma 13.1.3]{refHS} or \cite[Example 8.4.4]{refPSC}.
\end{proof}

To apply Lemma \ref{Frobpower}, we will also want to note:

\begin{lem}\label{Frobex} Let $I\subseteq R=k[\pr{n}]$ be an ideal generated by
$s$ elements, and assume ${\rm char}(k)=p>0$ and $q$ is a power of $p$.
Then $I^{sq}\subseteq I^{[q]}$.
\end{lem}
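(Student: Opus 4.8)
The plan is to reduce the containment to a pigeonhole count on monomials in a fixed generating set. First I would fix generators $f_1,\ldots,f_s$ of $I$ and identify the Frobenius power explicitly as $I^{[q]}=(f_1^q,\ldots,f_s^q)$. This identification is exactly where the characteristic-$p$ hypothesis enters: since $q$ is a power of $p$, the Frobenius map $a\mapsto a^q$ is additive as well as multiplicative, so for any $g=\sum_i c_if_i\in I$ with $c_i\in R$ one has $g^q=\sum_i c_i^q f_i^q\in(f_1^q,\ldots,f_s^q)$, while conversely each $f_i^q$ is the $q$-th power of an element of $I$. Hence the ideal generated by $\{g^q:g\in I\}$ is precisely $(f_1^q,\ldots,f_s^q)$.

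Next I would describe $I^{sq}$ on generators: it is generated by the products $f_1^{a_1}\cdots f_s^{a_s}$ with $a_1+\cdots+a_s=sq$ and each $a_i\ge0$. To establish the containment it therefore suffices to check that each such generator lies in $I^{[q]}$.

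The key step is the pigeonhole principle: if $a_1+\cdots+a_s=sq$ is a sum of $s$ nonnegative integers, then at least one index satisfies $a_i\ge q$, since otherwise every $a_i\le q-1$ and the total would be at most $s(q-1)<sq$. For such an index the monomial $f_1^{a_1}\cdots f_s^{a_s}$ is divisible by $f_i^q$, which lies in $I^{[q]}$ by the first step, so the monomial itself lies in $I^{[q]}$. Running this over all generators of $I^{sq}$ yields $I^{sq}\subseteq I^{[q]}$, as desired.

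I do not expect a genuine obstacle here, as the whole content is the pigeonhole count. The only point that requires care is the characteristic-$p$ identification $I^{[q]}=(f_1^q,\ldots,f_s^q)$ in the first step: without the Frobenius additivity one could not be sure that divisibility of a monomial by $f_i^q$ actually places it inside the Frobenius power, so this is the step I would state and justify most carefully.
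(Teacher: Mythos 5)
Your proof is correct and is essentially the paper's own argument: the paper likewise takes the monomials $f_1^{a_1}\cdots f_s^{a_s}$ with $\sum_i a_i=sq$ as generators of $I^{sq}$ and applies the same pigeonhole count to extract a factor $f_i^q$. One small remark: for this containment only the trivial inclusion $(f_1^q,\ldots,f_s^q)\subseteq I^{[q]}$ is needed (each $f_i^q$ is the $q$-th power of an element of $I$, hence a generator of $I^{[q]}$), so the Frobenius additivity you single out as the delicate point is not actually load-bearing in this lemma; it is what gives the reverse inclusion $I^{[q]}\subseteq(f_1^q,\ldots,f_s^q)$ and what underlies results like Proposition \ref{Frobpower}.
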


\begin{proof} See Exercise \ref{ex19}.
\end{proof}

\begin{Eg}\label{HHarg}
Consider $Z=p_1+\cdots+p_s\subset \pr{n}$ and let $I=I(Z)\subseteq R=k[\pr{n}]$.
Assume ${\rm char}(k)=p>0$ and that $q$ is a power of $p$. Then 
$I(p_i)^{qn}\subseteq I(p_i)^{[q]}$ by Lemma \ref{Frobex} since the ideal of a point in $\pr{n}$
is generated by $n$ linear forms, so
$I^{(qn)}=\cap_i\,I(p_i)^{qn}\subseteq \cap_i\,I(p_i)^{[q]}\subseteq (\cap_i\,I(p_i))^{[q]}\subseteq I^q$
by Proposition \ref{Frobpower} and the obvious fact that $I^{[q]}\subseteq I^q$. 
\end{Eg}

\subsection{Estimating the Resurgence}

In this section we show how to use $\gamma$ from Definition \ref{gammadef} 
and the regularity of an ideal to give bounds on $\rho(I)$.
First we show how to interpret $\gamma$ in this context. Given points $p_1,\ldots,p_s\in\pr{n}$, let
$I=I(Z)$ for $Z=p_1+\cdots+p_s$. Define $\gamma(I)$ to be the infimum of $d_m/m$ 
where $d_m$ is the least degree $t$ such that
$I^{(m)}$ contains a nonzero form of degree $t$. 
(By Proposition \ref{fatptprop}, this is consistent with Definition 
\ref{gammadef}; i.e., $\gamma(I)=\gamma(\pr2;p_1,\ldots,p_s)$.) As in Proposition \ref{gammalimit},
this is actually a limit which is decreasing on multiplicative subsequences;
i.e., $d_{ms}/(ms)\le d_m/m$ for all $s>0$.
More generally, given a homogeneous ideal $0\ne J\subseteq k[\pr{n}]$,
we will denote the least degree $t$ such that $J$ contains a nonzero form of degree $t$
by $\alpha(J)$. Thus $\alpha(J)$ is the degree in which the ideal starts (hence
the use of the first letter, $\alpha$, of the Greek alphabet to denote this concept).
One can also regard $\alpha(J)$ as the $M$-order of $J$, where $M$ is the ideal generated by the variables
(i.e., $\alpha(J)$ is the greatest power of $M$ containing $J$). 

As noted by the remark after Exercise \ref{ex8}, $\varepsilon(\pr2;p_1,\ldots,p_s)\ge1/s$,
hence by Corollary \ref{epsgammacor} we have $\gamma(I)\ge 1$. By a similar argument,
this remains true for $\pr{n}$. In particular, $\gamma(I)>0$,
so it makes sense to divide by $\gamma(I)$.

Given a homogeneous ideal $J\subseteq R=k[\pr{n}]$, for any $t\ge0$ let
$J_t$ be the $k$-vector space span of the forms of degree $t$ in $k$ 
(called the homogeneous component of $J$ of degree $t$).
Note that $R/J$ is also graded; we define $(R/J)_t$ to be $R_t/J_t$.
We recall that the {\it regularity\/} ${\rm reg}(I)$ of $I$ is the least degree $t\ge0$ such that $(R/I)_t$ and $(R/I)_{t-1}$ 
have the same vector space dimension. We have the following theorem:

\begin{thm}[\cite{refBH}]\label{BHthm}
Let $Z=p_1+\cdots+p_s\subset \pr{n}$, let $I=I(Z)$ and let $r$ and $m$ be positive integers. 
\begin{itemize}
\item[(a)] If $\alpha(I^{(m)})<r\alpha(I)$, then $I^{(m)}\not\subseteq I^r$.
\item[(b)] If $r{\rm reg}(I)\le\alpha(I^{(m)})$, then $I^{(m)}\subseteq I^r$.
\item[(c)] $$\frac{\alpha(I)}{\gamma(I)}\le \rho(I)\le \frac{{\rm reg}(I)}{\gamma(I)}$$
\item[(d)] If $\alpha(I)={\rm reg}(I)$, then $I^{(m)}\subseteq I^r$ if and only if $\alpha(I^{(m)})\ge r\alpha(I)$.
\end{itemize}
\end{thm}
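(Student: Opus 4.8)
The plan is to handle the four parts in the order (a), (b), (c), (d): parts (a) and (b) are the two genuine inputs, and (c), (d) are formal consequences of them together with the definitions of $\gamma$ and $\rho$.

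For (a) I would argue by a degree count. The key point is $\alpha(I^r)\ge r\alpha(I)$: choosing homogeneous generators of $I$, each of degree $\ge\alpha(I)$, every homogeneous element of $I^r$ is a sum of products of $r$ such generators and hence has degree $\ge r\alpha(I)$. Thus a nonzero form of minimal degree $\alpha(I^{(m)})$ in $I^{(m)}$, when $\alpha(I^{(m)})<r\alpha(I)$, has degree strictly below the minimal degree occurring in $I^r$, so it cannot lie in $I^r$; hence $I^{(m)}\not\subseteq I^r$.

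Part (b) is the heart. First I would record two reductions. Since $\alpha(I)\le{\rm reg}(I)$ and $\alpha(I^{(m)})\le m\alpha(I)$ (because $I^m\subseteq I^{(m)}$ by Lemma \ref{ezcontainment}), the hypothesis $r\,{\rm reg}(I)\le\alpha(I^{(m)})$ forces $m\ge r$, so $I^{(m)}\subseteq I^{(r)}$; and every homogeneous element of $I^{(m)}$ has degree $\ge\alpha(I^{(m)})\ge r\,{\rm reg}(I)$. It therefore suffices to prove $(I^{(r)})_t\subseteq(I^r)_t$ for all $t\ge r\,{\rm reg}(I)$. I would prove this by induction on $r$, the case $r=1$ being $I^{(1)}=I$. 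Writing $\rho={\rm reg}(I)$, the inductive step rests on a \emph{peeling} statement: for $t\ge r\rho$ the multiplication map $I_\rho\otimes(I^{(r-1)})_{t-\rho}\to(I^{(r)})_t$ is surjective. Granting this, since $t-\rho\ge(r-1)\rho$ the inductive hypothesis gives $(I^{(r-1)})_{t-\rho}\subseteq(I^{r-1})_{t-\rho}$, and then $(I^{(r)})_t\subseteq I_\rho\cdot(I^{r-1})_{t-\rho}\subseteq I\cdot I^{r-1}=(I^r)_t$, closing the induction. (The opposite inclusion $I_\rho\cdot(I^{(r-1)})_{t-\rho}\subseteq(I^{(r)})_t$ is automatic from $I^{(1)}\cdot I^{(r-1)}\subseteq I^{(r)}$, so the entire content is the surjectivity.) This surjectivity is exactly where Castelnuovo--Mumford regularity enters and is the step I expect to be the main obstacle: I would deduce it from the standard facts that $\shf{I}_Z(t)$ is globally generated for $t\ge{\rm reg}(I)$, that $I$ is generated in degrees $\le{\rm reg}(I)$, and that the relevant $H^1$'s vanish in that range. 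Concretely, passing to the blow-up $X$ via Proposition \ref{fatptprop} turns the claim into surjectivity of $H^0(X,\OO_X(\rho H-\sum_iE_i))\otimes H^0(X,\OO_X((t-\rho)H-(r-1)\sum_iE_i))\to H^0(X,\OO_X(tH-r\sum_iE_i))$ for $t\ge r\rho$, which follows from the regularity of one factor relative to the linear system of the other; this cohomological bookkeeping is the only real work.

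Finally (c) and (d) fall out. For the upper bound in (c), if $I^{(m)}\not\subseteq I^r$ then (b) forces $\alpha(I^{(m)})<r\,{\rm reg}(I)$; combined with $\alpha(I^{(m)})\ge m\gamma(I)$ (since $\gamma(I)$ is an infimum of $\alpha(I^{(m)})/m$, cf. Proposition \ref{gammalimit}), this gives $m/r<{\rm reg}(I)/\gamma(I)$, whence $\rho(I)\le{\rm reg}(I)/\gamma(I)$. For the lower bound I would, for each large $m$, take $r$ to be the least integer with $r\alpha(I)>\alpha(I^{(m)})$; then (a) gives $I^{(m)}\not\subseteq I^r$, and since $r\le\alpha(I^{(m)})/\alpha(I)+1$ and $\alpha(I^{(m)})/m\to\gamma(I)$, the ratios $m/r$ tend to $\alpha(I)/\gamma(I)$, so $\rho(I)\ge\alpha(I)/\gamma(I)$. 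Part (d) is then immediate: when $\alpha(I)={\rm reg}(I)$ the two thresholds $r\alpha(I)$ and $r\,{\rm reg}(I)$ coincide, so $\alpha(I^{(m)})\ge r\alpha(I)$ yields containment by (b) while $\alpha(I^{(m)})<r\alpha(I)$ yields non-containment by (a), which is precisely the asserted equivalence.
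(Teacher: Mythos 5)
Parts (a), (c), and (d) of your proposal are correct and essentially the paper's own arguments: (a) is the same degree count, (d) is the same combination of (a) and (b), and your lower bound in (c) (taking, for each large $m$, the least $r$ with $r\alpha(I)>\alpha(I^{(m)})$) is a harmless variant of the paper's rescaling argument. The problem is (b), which you rightly call the heart, and the gap sits exactly at the step you label ``the only real work.'' Your reduction is fine and matches the paper: the hypothesis forces $m\ge r$, so it suffices to show $(I^{(r)})_t\subseteq (I^r)_t$ for all $t\ge r\sigma$, where I write $\sigma={\rm reg}(I)$ (renaming your $\rho$ to avoid a clash with the resurgence $\rho(I)$). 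But at this point the paper does not argue from scratch: it quotes from \cite{refGGP} (see also \cite{refAV}) the two facts that ${\rm reg}(I^r)\le r\,{\rm reg}(I)$ and that $I^r_t=I^{(r)}_t$ for $t\ge{\rm reg}(I^r)$ (the latter because $I^{(r)}$ is the saturation of $I^r$). Your proposal replaces this citation with an induction whose engine is the peeling surjectivity $I_\sigma\otimes(I^{(r-1)})_{t-\sigma}\twoheadrightarrow(I^{(r)})_t$ for $t\ge r\sigma$, and that statement is never actually proved.

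The peeling statement is true, but it is not ``cohomological bookkeeping'': by your own induction it yields $I^{(r)}_t=I^r_t$ for $t\ge r\sigma$, i.e., it is essentially equivalent to the theorem of \cite{refGGP} being invoked --- a result special to ideals of points (the bound ${\rm reg}(I^r)\le r\,{\rm reg}(I)$ fails for general homogeneous ideals, so no formal regularity manipulation can produce it). The standard facts you list (global generation of $\shf{I}_Z(t)$ for $t\ge\sigma$, generation of $I$ in degrees $\le\sigma$, vanishing of $H^1(\shf{I}_Z(t))$ in that range) all concern the first power only, and they do not suffice. Concretely, the Castelnuovo--Mumford-type lemma you would use for surjectivity of $H^0(L)\otimes H^0(F)\to H^0(L\otimes F)$, with $L=\OO_X(\sigma H-\sum_iE_i)$ globally generated and $F=\OO_X((t-\sigma)H-(r-1)\sum_iE_i)$, requires $H^i(F\otimes L^{-i})=0$ for $i>0$. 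Here $F\otimes L^{-1}=\OO_X((t-2\sigma)H-(r-2)\sum_iE_i)$ is again the ideal sheaf of a fat point scheme, and the required $H^1$ vanishing is a regularity bound on the symbolic power $I^{(r-2)}$ --- a statement of the same depth as (b) itself, so the argument as sketched is circular. Moreover, even in the first nontrivial case $r=2$, $n=2$, where $F\otimes L^{-1}=\OO_X((t-2\sigma)H)$ is harmless, the second hypothesis fails: Serre duality gives $h^2(X,F\otimes L^{-2})=h^2(X,\OO_X((t-3\sigma)H+\sum_iE_i))=h^0(\pr2,\OO_{\pr2}(3\sigma-3-t))$, which is nonzero throughout the range $2\sigma\le t\le 3\sigma-3$ (nonempty as soon as $\sigma\ge3$), so this route does not even cover the degrees your induction needs. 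To repair the proof you must either cite \cite{refGGP}/\cite{refAV} as the paper does, or supply an honest proof of the peeling surjectivity, which amounts to reproving that regularity theorem for points.
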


\begin{proof}
(a) This is clear, since $\alpha(I^r)=r\alpha(I)$
and so in this case $I^{(m)}$ has a nonzero element of degree less than any nonzero element
of $I^r$.

(b) First we check that $r{\rm reg}(I)\le\alpha(I^{(m)})$ implies that $r\le m$.
Since $I^m\subseteq I^{(m)}$, we see that $\alpha(I^{(m)})\le \alpha(I^m)=m\alpha(I)$.
But $\alpha(I)\le{\rm reg}(I)$ since for all $0\le t< \alpha(I)$ we have 
$\dim_k(R/I)_t>\dim_k(R/I)_{t-1}$. Thus $r{\rm reg}(I)\le\alpha(I^{(m)})\le m\alpha(I)\le m{\rm reg}(I)$.
But for any nonzero ideal $I$ properly contained in $(x_0,\ldots,x_n)$ we have $\alpha(I)>0$, 
so ${\rm reg}(I)>0$ and we see $m\ge r$
and hence $I^{(m)}\subseteq I^{(r)}$.

Now we use the facts that ${\rm reg}(I^r)\le r\,{\rm reg}(I)$ and 
$I^{r}_t=I^{(r)}_t$ for all $t\ge {\rm reg}(I^r)$ \cite{refGGP}; see also \cite{refAV}. 
Thus for $t<r{\rm reg}(I)\le\alpha(I^{(m)})$ we have $0=I^{(m)}_t\subseteq I^r_t$, while for
$t\ge r{\rm reg}(I)\ge {\rm reg}(I^r)$, we have $I^{(m)}_t\subseteq I^{(r)}_t=I^r_t$,
so $I^{(m)}_t\subseteq I^r_t$ holds for all $t$ and we have $I^{(m)}\subseteq I^r$.

(c) For any $0<m/r<\frac{\alpha(I)}{\gamma(I)}$, 
since $\frac{\alpha(I)}{\gamma(I)}=\lim_{s\to\infty}ms\alpha(I)/\alpha(I^{(ms)})$ by Proposition \ref{gammalimit},
for $s\gg0$ we have 
$m/r<ms\alpha(I)/\alpha(I^{(ms)})$, and hence $\alpha(I^{(ms)})< rs\alpha(I)$,
so $I^{(ms)}\not\subseteq I^{rs}$ for $s\gg0$ by (a), hence
$m/r=ms/(rs)\le \rho(I)$; i.e., $\frac{\alpha(I)}{\gamma(I)}\le \rho(I)$.
And for any $m/r\ge{\rm reg}(I)/\gamma(I)$, we have $r{\rm reg}(I)\le m\gamma(I)\le \alpha(I^{(m)})$
so $I^{(m)}\subseteq I^r$ by (b) and hence $ \rho(I)\le \frac{{\rm reg}(I)}{\gamma(I)}$.

(d) If $\alpha(I^{(m)})< r\alpha(I)$, then containment fails by (a), while if $\alpha(I^{(m)})\ge r\alpha(I)$,
then containment holds by (b). 
\end{proof}

\begin{Eg}\label{spts}
Let $I$ be the ideal of $p_1,\ldots,p_s\in\pr{n}$ for $s=\binom{d+n-1}{n}$ general points. 
Then $\alpha(I)={\rm reg}(I)=d$, hence $I^{(m)}\subseteq I^r$ if and only if
$\alpha(I^{(m)})\ge r\alpha(I)$. Unfortunately,
$\alpha(I^{(m)})$ is not in general known. See
however Exercise \ref{ex20}.
\end{Eg}

\subsection{A Question and a Conjecture}

The paper \cite{refBH} gives examples of reduced schemes $Z_i\subset \pr{n}$
of finite sets of points such that $\lim_i\rho(I(Z_i))=n$.
This shows that the bound given in Theorem \ref{HHThm} is in some sense sharp.
However, one can hope to do better. In fact, Huneke has raised the following question:

\begin{Ques}[Huneke]\label{HunekeQues}
Let $I\subset k[\pr{2}]$ be the ideal $I=I(Z)$ where $Z=p_1+\cdots+p_s\subset \pr2$
for a finite set of distinct points $p_i$.
Must it be true that $I^{(3)}\subseteq I^2$?
\end{Ques}

In the case of the ideal $I$ of any $s$ generic points of $\pr2$, \cite{refBH} showed that the answer is yes.
This and additional examples, both in dimension 2 and in higher dimensions, suggested the following conjecture
(this is a simplified version of \cite[Conjecture 8.4.2]{refPSC}):

\begin{Conj}[Harbourne]\label{HarbConj} 
Let $I\subset k[\pr{n}]$ be the ideal $I=I(Z)$ where $Z=p_1+\cdots+p_s\subset \pr{n}$
for a finite set of distinct points $p_i$.
Then $I^{(m)}\subseteq I^r$ if $m\ge rn-(n-1)$.
\end{Conj}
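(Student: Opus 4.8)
The plan is to first reduce the claim to a single threshold containment and then prove that threshold case cleanly in positive characteristic, isolating the genuine difficulty in the passage to arbitrary $r$ and to characteristic $0$. Since $rn-(n-1)=n(r-1)+1$, and since symbolic powers are nested ($m'\ge m$ gives $I^{(m')}\subseteq I^{(m)}$), it suffices to prove the single containment $I^{(n(r-1)+1)}\subseteq I^r$; the whole stated range $m\ge rn-(n-1)$ then follows at once. So I would fix attention on $m=n(r-1)+1$.

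In characteristic $p>0$ with $r=q$ a power of $p$, I would run exactly the argument of Example \ref{HHarg}, but replacing the crude bound of Lemma \ref{Frobex} by a sharp one that exploits the fact that the ideal of a point is a complete intersection. After a linear change of coordinates, write $I(p_i)=(\ell_1,\dots,\ell_n)$ with the $\ell_j$ linear; since $(\sum_j c_j\ell_j)^q=\sum_j c_j^q\ell_j^q$ in characteristic $p$, we have $I(p_i)^{[q]}=(\ell_1^q,\dots,\ell_n^q)$. Now $I(p_i)^{n(q-1)+1}$ is spanned by monomials $\ell_1^{a_1}\cdots\ell_n^{a_n}$ with $\sum_j a_j=n(q-1)+1$, and by pigeonhole some $a_j\ge q$, so every such monomial lies in $(\ell_1^q,\dots,\ell_n^q)$. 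Hence
$$I(p_i)^{n(q-1)+1}\subseteq I(p_i)^{[q]},$$
which improves the $I(p_i)^{nq}\subseteq I(p_i)^{[q]}$ of Lemma \ref{Frobex} by exactly $n-1$ (and is sharp, since $\ell_1^{q-1}\cdots\ell_n^{q-1}\notin I(p_i)^{[q]}$). Intersecting over $i$, using Proposition \ref{Frobpower} and the obvious $I^{[q]}\subseteq I^q$, gives
$$I^{(n(q-1)+1)}=\bigcap_i I(p_i)^{n(q-1)+1}\subseteq\bigcap_i I(p_i)^{[q]}=\Big(\bigcap_i I(p_i)\Big)^{[q]}=I^{[q]}\subseteq I^q,$$
which is exactly the conjecture for $r=q$.

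The hard part will be removing the hypothesis that $r$ is a prime power and handling characteristic $0$, and this is precisely where the statement stops being a theorem. Two routes are natural. In positive characteristic one can try to upgrade the exact Frobenius containment to a tight-closure argument: since $R$ is regular, $(I^r)^*=I^r$, so it would suffice to show $I^{(n(r-1)+1)}\subseteq(I^r)^*$, i.e. that for $f\in I^{(n(r-1)+1)}$ there is a fixed $0\ne c$ with $cf^q\in(I^r)^{[q]}=(I^{[q]})^r$ for all $q=p^e$. One gets $f^q\in I^{(q(n(r-1)+1))}$ immediately from $(\bigcap_i I(p_i)^m)^q\subseteq\bigcap_i I(p_i)^{mq}$, but $(I^{[q]})^r$ is a \emph{power of an intersection} rather than an intersection, so the clean termwise monomial bound above no longer applies, and the interpolation in $r$ is genuinely delicate. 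In characteristic $0$ the native route is the asymptotic multiplier ideal method of \cite{refELS} (see \cite{refTe}): one forms $\asympt{\,\cdot\,}{I}$ for the graded system of symbolic powers and chains the standard containments relating $I^{(m)}$, the asymptotic multiplier ideal, and its powers via subadditivity, which reproduces the Hochster--Huneke bound $I^{(nr)}\subseteq I^r$ of Theorem \ref{HHThm}. To harvest the extra $n-1$ one must feed in that the $p_i$ are \emph{smooth} points --- equivalently that the relevant log canonical threshold at each $p_i$ equals $n$, the birational incarnation of the sharp complete-intersection estimate used above. Making this saving uniform in $r$ is the crux, and it is exactly the step I would expect to resist a routine argument; its difficulty is what leaves the statement a conjecture rather than a corollary of Theorem \ref{HHThm}.
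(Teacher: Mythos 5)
Your proposal is correct as far as a proof is possible and takes essentially the same route as the paper: the statement is a conjecture, and the only case anyone can prove --- ${\rm char}(k)=p>0$ with $r$ a power of $p$ --- is exactly what you establish, by the same sharpened Frobenius pigeonhole bound $I(p_i)^{n(q-1)+1}\subseteq I(p_i)^{[q]}$ that the paper uses in Example \ref{charp} and Exercise \ref{ex21} (there phrased as refining Lemma \ref{Frobex} to $I^{qs-(s-1)}\subseteq I^{[q]}$ for an ideal with $s$ generators), combined with Proposition \ref{Frobpower}. Your identification of the remaining difficulty --- interpolating to arbitrary $r$ and to characteristic $0$, where the multiplier-ideal method of \cite{refELS} only yields the Hochster--Huneke bound $I^{(nr)}\subseteq I^r$ of Theorem \ref{HHThm} --- matches the paper's framing of why this remains open.
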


\begin{Eg}\label{charp}
Let $I$ be the ideal of distinct points $p_1,\ldots,p_s\in\pr{n}$. Mimicking the argument of Example
\ref{HHarg} shows in fact that $I^{(rn-(n-1))}\subseteq I^r$ holds if ${\rm char}(k)=p>0$
and $r$ is a power of $p$. See Exercise \ref{ex21}.
\end{Eg}

We thus obtain an observation of Huneke:

\begin{cor}
Question \ref{HunekeQues} has an affirmative answer 
when ${\rm char}(k)=2$.
\end{cor}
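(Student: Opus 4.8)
The plan is to deduce the corollary as an immediate specialization of Example \ref{charp}, choosing the parameters so that its conclusion becomes exactly the containment asked about in Question \ref{HunekeQues}. Since Huneke's question concerns points of $\pr{2}$, I would set $n=2$, and since it asks whether $I^{(3)}\subseteq I^2$, I would set $r=2$. With these choices the exponent in Example \ref{charp} is $rn-(n-1)=2\cdot 2-1=3$, so the containment $I^{(rn-(n-1))}\subseteq I^r$ supplied there reads precisely $I^{(3)}\subseteq I^2$. The single hypothesis of Example \ref{charp} is that $r$ be a power of $p=\mathrm{char}(k)$, and this is exactly where the assumption $\mathrm{char}(k)=2$ is used: then $r=2=2^1$ is a power of $p$, so the hypothesis is met and the containment holds for the ideal $I=I(Z)$ of every finite set of distinct points $Z=p_1+\cdots+p_s\subset\pr{2}$. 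This answers Question \ref{HunekeQues} affirmatively.

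For completeness I would recall where the actual content sits, since the corollary itself is only a numerical substitution. Writing $q=r$ for the relevant power of $p$, the ideal of a single point $p_i\in\pr{n}$ is, after a linear change of coordinates (which commutes with forming Frobenius powers), $I(p_i)=(x_0,\ldots,x_{n-1})$. A monomial count shows $I(p_i)^{nq-(n-1)}\subseteq I(p_i)^{[q]}$: any generating monomial $x_0^{a_0}\cdots x_{n-1}^{a_{n-1}}$ of $I(p_i)^{nq-(n-1)}$ has $\sum_j a_j=n(q-1)+1$, so some $a_j\ge q$ and the monomial lies in $(x_0^q,\ldots,x_{n-1}^q)=I(p_i)^{[q]}$. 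Intersecting over $i$ and invoking that Frobenius powers commute with intersection (Proposition \ref{Frobpower}), together with the trivial $I^{[r]}\subseteq I^r$, gives $I^{(rn-(n-1))}=\bigcap_i I(p_i)^{rn-(n-1)}\subseteq\bigcap_i I(p_i)^{[r]}=\left(\bigcap_i I(p_i)\right)^{[r]}=I^{[r]}\subseteq I^r$, which is the statement of Example \ref{charp}.

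The main, and essentially only, obstacle is that this argument demands that $r$ be a power of the characteristic, so it is genuinely special to $\mathrm{char}(k)=2$ here: the same numerology yields nothing for $r=2$ in characteristic $0$ or in odd characteristic, which is precisely why Question \ref{HunekeQues} remains open in general. Beyond checking that $2$ is a power of $2$, there is no further difficulty in the corollary itself.
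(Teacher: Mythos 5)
Your proposal is correct and matches the paper's own argument exactly: the corollary is obtained by specializing Example \ref{charp} to $n=2$, $r=2$, using that $2$ is a power of ${\rm char}(k)=2$, and your recap of the underlying Frobenius-power computation is precisely the refinement of Lemma \ref{Frobex} that the paper records in Exercise \ref{ex21}.
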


Taking $r=2$ in Conjecture \ref{HarbConj} suggests in light of Theorem \ref{BHthm}(a) the following possibly easier question:

\begin{Ques}\label{Ques2}
Let $0\ne I\subset k[\pr{n}]$ be any homogeneous ideal.
Must it be true that $\alpha(I^{(n+1)})\ge 2\alpha(I)$?
\end{Ques}

What is known is that $\alpha(I^{(n+1)})\ge \frac{n+1}{n}\alpha(I)$. (For example, 
if $I$ is the ideal of a set of points,
this follows from \cite[Theorem 1]{refCh}; alternatively, we have $I^{(rn)}\subseteq I^r$
by Theorem \ref{HHThm}, and hence $\alpha(I^{(rn)})\ge r\alpha(I)$, or $\alpha(I^{(rn)})/(rn)\ge \alpha(I)/n$. 
But Proposition \ref{gammalimit} and its proof holds also for $\pr{n}$.
Taking the limit as $r\to \infty$ gives $\gamma(I)\ge \alpha(I)/n$ and, since $\alpha(I^{(m)})/m\ge\gamma(I)$ 
for every $m\ge0$ as in Proposition \ref{gammalimit},
we have $\alpha(I^{(n+1)})\ge \frac{n+1}{n}\alpha(I)$.)  
In fact, examples suggest that $\alpha(I^{(rn-n+1)})\ge r\alpha(I)+n-1$ may hold for the ideal
of any finite set of points in $\pr{n}$ (and perhaps for any nontrivial homogeneous ideal
in $k[\pr{n}]$).

\newpage
%\vskip.5in

\subsection{Exercises}

\begin{Ex}\label{ex18} Prove Lemma \ref{ezcontainment}: 
Let $Z=p_1+\cdots+p_r\subset \pr{n}$ and let $I=I(Z)\subseteq R=k[\pr{n}]$.
Then $I^r\subseteq I^{(m)}$ if and only if $r\ge m$.
\end{Ex}

\begin{Soln}[Exercise \ref{ex18}] If $r\ge m$, then $I^r\subseteq I^m\subseteq I^{(m)}$.
Conversely, assume $I^r\subseteq I^{(m)}$. Localize at $p_1$ and contract to $R_i$, where 
$p_1\in U_i={\rm Spec}(R_i)\cong {\bf A}^n$ is a standard affine open neighborhood
(the complement of $x_i=0$, where $R_i=k[x_0/x_i,\cdots, x_n/x_i]$ for some $x_i$
not vanishing at $p_1$) to get $J(p_1)^r\subseteq J(p_1)^m$, and hence $r\ge m$,
where $J(p_1)\subset R_i$ is the ideal of $p_1$ in $R_i$. 
\end{Soln}

\begin{Ex}\label{ex19} Prove Lemma \ref{Frobex}: 
Let $I\subseteq R=k[\pr{n}]$ be an ideal generated by
$s$ elements, and assume ${\rm char}(k)=p>0$ and $q$ is a power of $p$.
Then $I^{sq}\subseteq I^{[q]}$.
\end{Ex}

\begin{Soln}[Exercise \ref{ex19}]
Let $f_1,\ldots,f_s$ generate $I$. Then monomials in the $f_i$ of degree $sq$
generate $I^{sq}$, and for each such monomial there must be an $i$ such that
$f_i^q$ is a factor. Thus each monomial is in $I^{[q]}$.
\end{Soln}

\begin{Ex}\label{ex20}
Let $I\subset R=k[\pr2]$ be the ideal of non-collinear points 
$p_1,p_2,p_3\in\pr2$. Then $I^{(m)}\subseteq I^r$
if and only if $m\ge \frac{4r-1}{3}$. 
\end{Ex}

\begin{Soln}[Exercise \ref{ex20}] Let $X$ be the blow up of $\pr2$ at the three points.
Note that $B=3L-2(E_1+E_2+E_3)=(L-E_1-E_2)+(L-E_1-E_3)+(L-E_2-E_3)\in\EFF(X)$ 
and $C=2L-E_1-E_2-E_3\in\NEF(X)$ (since $C$ is the class of a prime divisor of positive self-intersection).
Now consider $m=2s+i$ for $s\ge0$ and $0\le i\le 1$. Then $\alpha(I^{(m)})=3s+2i$,
since $sB+iC=(3s+2i)L-(2s+i)(E_1+E_2+E_3)\in\EFF(X)$ is clear but
$((3s+2i-1)L-(2s+i)(E_1+E_2+E_3))\cdot (2L-E_1-E_2-E_3)<0$. 
In particular, $\alpha(I)=2$. Since $R_2/I_2$ and $R_1/I_1$ both have dimension 3, 
we also have ${\rm reg}(I)=2$.

Thus $I^{(m)}\subseteq I^r$ holds by Theorem \ref{BHthm}(d) for $m=2s+i$ exactly when $3s+2i\ge 2r$.
First check the case that $m$ is even: containment holds exactly when
$3m/2\ge 2r$ which is equivalent to $(3m+1)/2\ge 2r$, or $m\ge (4r-1)/3$. 
Now say $m$ is odd: containment holds exactly when
$(3m+1)/2=(3(m-1)+4)/2\ge 2r$ which is again equivalent to $m\ge (4r-1)/3$. 
\end{Soln}

\begin{Ex}\label{ex21} Justify Example \ref{charp}:
Let $J$ be the ideal of distinct points $p_1,\ldots,p_s\in\pr{n}$.  Mimicking the argument of Example
\ref{HHarg} shows in fact that $J^{(rn-(n-1))}\subseteq J^r$ holds if ${\rm char}(k)=p>0$
and $r$ is a power of $p$.
\end{Ex}

\begin{Soln}[Exercise \ref{ex21}] 
The same argument as given in Example \ref{HHarg} works except that we need to refine the statement of
Lemma \ref{Frobex} so that $I^{qs-(s-1)}\subseteq I^{[q]}$.
Let $f_1,\ldots,f_s$ generate $I$. Then monomials in the $f_i$ of degree $sq-(s-1)$
generate $I^{sq-(s-1)}$, but for each such monomial there must in fact be an $i$ such that
$f_i^q$ is a factor (if not, the monomial has degree at most $s(q-1)$ in the $f_i$,
which is less than $sq-(s-1)$). Thus each monomial is in $I^{[q]}$.
\end{Soln}

\vskip.5in


\begin{thebibliography}{GHM}

\bibitem[AV]{refAV}
   A.\ Arsie and  J.\ E.\ Vatne.
   {\it A Note on Symbolic and Ordinary Powers of Homogeneous Ideals},
   Ann. Univ. Ferrara - Sez. VII - Sc. Mat. Vol. IL, 19--30 (2003).
   (\url{http://www.uib.no/People/nmajv/03.pdf}).


\bibitem[Bi]{refBi} P.\ Biran. {\it Constructing new ample
divisors out of old ones}, Duke Math. J. 98 (1999), no. 1, 113--135.

\bibitem[BH]{refBH} C.\ Bocci and B.\ Harbourne. {\it Comparing Powers and Symbolic Powers of 
Ideals}, to appear, Journal of Algebraic Geometry. 

\bibitem[Ch]{refCh}  G.\ V.\ Chudnovsky. {\it Singular points on complex hypersurfaces and multidimensional
Schwarz Lemma}, S\'eminaire de Th\'eorie des Nombres, Paris 1979--80,
S\'eminaire Delange-Pisot-Poitou, Progress in Math vol. 12, M-J Bertin, editor,
Birkh\"auser, Boston-Basel-Stutgart (1981).

\bibitem[D]{refD}
   J.-P.\ Demailly.
   {\it Singular Hermitian metrics on positive line bundles},
   Complex algebraic varieties (Bayreuth, 1990), Lect. Notes Math. 1507,
   Springer-Verlag, 1992, pp. 87--104.

\bibitem[Du]{refDu} M.\ Dumnicki. {\it Reduction method for linear systems of plane curves with base fat points},
preprint 2006 (math.AG/0606716).

\bibitem[ELS]{refELS} L.\ Ein, R.\ Lazarsfeld and K.\ Smith. {\it Uniform bounds and 
symbolic powers on smooth varieties}, Invent. Math. 144 (2001), p. 241-252.

\bibitem[Es]{refEs} D.\ Eisenbud. {\it Commutative Algebra With A View Toward Algebraic Geometry},
New York : Springer-Verlag, 1995, pp. xvi+797.

\bibitem[EGA]{refEGA} A.\ Grothendieck and J.\ Dieudonn\'e. {\it \'Elements de G\'eom\'etrie Alg\'ebrique IV},
Publ. Math. 32, Inst. Hautes \'Etudes Sci., 1967.

\bibitem[GM]{refGM} C.\ Galindo and F.\ Monserrat. {\it The total coordinate ring of a smooth projective 
surface}, Journal of Algebra 284 (2005) 91--101. 

\bibitem[GHM]{refGHM} A.\ Geramita, B.\ Harbourne and J.\ Migliore.
{\it Classifying Hilbert functions of fat point subschemes in $\pr2$}, Collect. Math. 60, 2 (2009), 159--192.

\bibitem[GGP]{refGGP} A.\ V.\ Geramita, A.\ Gimigliano and Y.\ Pitteloud. {\it Graded 
Betti numbers of some 
embedded rational $n$-folds}, Math.\ Annalen 301 (1995), 363-380.

\bibitem[Ha1]{refHa1} B.\ Harbourne. {\it Free Resolutions of Fat Point Ideals on ${\bf P}^2$}, J. Pure Appl. Alg.
125, 213--234 (1998).

\bibitem[Ha2]{refHa2} B.\ Harbourne. {\it Anticanonical rational surfaces}, Trans. Amer. Math. Soc. 349, 1191--1208 (1997).

\bibitem[Ha3]{refHa3} B.\ Harbourne. {\it The geometry of rational surfaces and 
Hilbert functions of points in the plane}, 
Canadian Mathematical Society Conference Proceedings 6 , 95--111 (1986).

\bibitem[Ha4]{refHa4} B. Harbourne. {\it Seshadri constants and very ample
divisors on algebraic surfaces}, J. Reine Angew. Math. {\bf 559}
(2003) 115--122.

\bibitem[Ha5]{refHa5} B. Harbourne. {\it On Nagata's Conjecture}, J. Alg. 236 (2001), 692--702.

\bibitem[Ha6]{refHa6} B. Harbourne. {\it Complete linear systems on rational 
surfaces}, Trans. Amer. Math. Soc. 289, 213--226 (1985).

\bibitem[HR1]{refHR1} B.\ Harbourne and J.\ Ro\'e.
{\it Discrete behavior of Seshadri constants on surfaces},
Journal of Pure and Applied Algebra 212 (2008), 616--627.

\bibitem[HR2]{refHR2} B.\ Harbourne and J.\ Ro\'e.
{\it Computing multi-point Seshadri constants on $\pr2$},
to appear, Bulletin of the Belgian Mathematical Society - Simon Stevin.

\bibitem[HR3]{refHR3} B.\ Harbourne and J.\ Ro\'e.
{\it Linear systems with multiple base points in $\pr2$},
Adv. Geom. 4 (2004), 41--59.

\bibitem[Hr]{refHr} R.\ Hartshorne. {\it Algebraic Geometry}, Graduate texts in mathematics (52), New York,
Springer-Verlag, 496 pp., 1977.

\bibitem[HH]{refHH}
M.\ Hochster and C.\ Huneke. {\it Comparison of symbolic and ordinary powers of
ideals}, Invent. Math. {\bf 147} (2002), no.~2, 349--369.

\bibitem[HK]{refHK} Y.\ Hu and S.\ Keel. {\it Mori dream spaces and GIT}, Michigan Math. J. 48 (2000) 331--348.

\bibitem[HS]{refHS}
    C.\ Huneke and I.\ Swanson.
   {\it Integral Closure of
  Ideals, Rings, and Modules}.
    London Math. Soc. Lecture Note Series {\bf 336} (2006),
    Cambridge University Press, pp. 421.


\bibitem[K]{refK} S.\ L.\ Kleiman. {\it Multiple-point formulas. I: Iteration}, Acta Math., 147:2 (1981), 13--49.

\bibitem[LH]{refLH} M.\ Lahyane and B.\ Harbourne. {\it Irreducibility of ($-1$)-classes of anticanonical
rational surfaces}, Pac.\ J. Math., 218 No. 1 (2005), 101--114.

\bibitem[Lz]{refLz} R.\ Lazarsfeld.  {\it Positivity in Algebraic Geometry~I}, Springer-Verlag, 2004.
   
   \bibitem[M]{refM}
   H.\ Matsumura.
   {\it Commutative Algebra}.
    W. A. Benjamin, New York, (1970), pp. 212 + xii
    
\bibitem[N1]{refN} M.\ Nagata. {\it On rational surfaces, II}, Mem.\ Coll.\ Sci.\ 
Univ.\ Kyoto, Ser.\ A Math.\ 33 (1960), 271--293.

\bibitem[N2]{refN2} M.\ Nagata. {\it On the 14-th problem of Hilbert}, 
Amer.\ J.\ Math.\ 81 (1959), 766--772.


\bibitem[Ot]{refOt} J.\ Ottem. {\it On the Cox ring of $\pr2$ blown up in several points on a line},
arXiv:0901.4277v3, preprint 2009.

\bibitem[PSC]{refPSC}
T.\ Bauer, S.\ Di Rocco, B.\ Harbourne, M.\ Kapustka, A.\ 
Knutsen, W.\ Syzdek, and T.\ Szemberg.
{\it A primer on Seshadri constants}, to appear in the AMS Contemporary Mathematics series volume 
``Interactions of Classical and Numerical Algebraic Geometry,"
Proceedings of a conference in honor of A.\ J.\ Sommese, held at Notre Dame, May 22--24 2008.

\bibitem[R]{refR} J. Ro\'e. {\it On the existence of plane curves
with imposed multiple points}, J. Pure Appl. Alg. 156 (2001), 115--126.

\bibitem[Ro]{refRo} J.\ Rosoff. {\it Effective divisor dlasses and blowings-up of $\pr2$}, Pac. J. Math. 89, No. 2 (1980),
419--429.

\bibitem[Sw]{refSw}
   I.\ Swanson.
  {\it Linear equivalence of topologies},
   Math. Z. 234 (2000), 755--775.

\bibitem[Sz]{refS} T. Szemberg. {\it Global and local positivity of
line bundles}, Habilitation, 2001.

\bibitem[SS]{refSS} B. Strycharz-Szemberg and T. Szemberg. {\it Remarks
on the Nagata Conjecture}, Serdica Math. J. {\bf 30}
(2004), 405--430.

\bibitem[ST]{refST} T. Szemberg and H. Tutaj-Gasi\'nska. {\it General
blow ups of the projective plane}, Proc. Amer. Math. Soc. {\bf 130}
(2002), no.~9, 2515--2524.

\bibitem[Te]{refTe} Z.\ Teitler.
{\it Bounding symbolic powers via asymptotic multiplier ideals}, 
preprint, 2009 (arXiv:0907.4045).

\bibitem[TVV]{refTVV} D.\ Testa, A.\ V\'arilly-Alvarado and M.\ Velasco.
{\it Big rational surfaces}, arXiv:0901.1094, preprint 2009.

\bibitem[T]{refT} H. Tutaj-Gasi{\'n}ska. {\it A bound for {S}eshadri
constants on $\pr2$}, Math. Nachr. {\bf 257} (2003), no.~1, 108--116.

\bibitem[W]{refW} M.\ Waldschmidt.
{\it Properties arithmetiques de fonctions de pleusieurs variables, II}, 
S\'eminaire Lelong: Analyse, $16^e$ anee, 1975--76, no. 10, 108--135.
Berlin, Springer-Verlag, LNM 578, 1977.

\bibitem[X]{refX} G. Xu.
{\it Curves in $\pr2$ and symplectic packings},
Math. Ann. {\bf 299} (1994), 609--613.


\end{thebibliography}
\end{document}